\def\rit{{\Bbb R}}
\def\cit{{\Bbb C}}
\def\tit{{\Bbb T}}
\def\eps{\varepsilon}
\def\Remarks{{\noindent \it Remarks. }} 
\def\Remark{{\noindent \it Remark. }}
\newtheorem{theorem}{Theorem}[section]
\newtheorem{lemma}[theorem]{Lemma}
\newtheorem{e-proposition}[theorem]{Proposition}
\newtheorem{e-definition}[theorem]{Definition\rm}
\newtheorem{theoreme}{Th\'eor\`eme}[section]
\newtheorem{proposition}[theoreme]{Proposition}
\def\og{\leavevmode\raise.3ex\hbox{$\scriptscriptstyle\langle\!\langle$~}}
\def\fg{\leavevmode\raise.3ex\hbox{~$\!\scriptscriptstyle\,\rangle\!\rangle$}}
\def\beq{\begin{equation}}
\def\eeq{\end{equation}}
\begin{document}

\centerline{\Large \bf Asymptotic behaviour of solutions of linearized}

\bigskip

\centerline{\Large \bf   Navier Stokes equations  in the long waves regime}

\bigskip

\centerline{D. Bian\footnote{Beijing Institute of Technology, School of Mathematics and Statistics, Beijing, China}, 
E. Grenier\footnote{UMPA, CNRS  UMR $5669$, Ecole Normale Sup\'erieure de Lyon, Lyon, France}}



\subsubsection*{Abstract}


The aim of this paper is to describe the long time behavior of solutions of linearized Navier Stokes equations near a concave shear
layer profile in the long waves regime, namely for small horizontal Fourier variable $\alpha$, when the viscosity $\nu$ vanishes.
We show that the solutions converge exponentially  to $0$, except in some range of $\alpha$, namely for 
$\nu^{1/4} \lesssim |\alpha| \lesssim \nu^{1/6}$, where there exists one unique unstable mode, with an associated eigenvalue $\lambda$,
such that $\Re \lambda$ is of order $\nu^{1/4}$.
In this regime we give a complete description of the solutions of linearized Navier Stokes equations
as the sum of the projection over  the unique exponentially growing mode
and of an exponentially decaying term.
The study of this linear instability is a key point in the study of the nonlinear instability of Prandtl bounday layers and of shear layer profiles
\cite{Bian6}.


\section{Introduction}


In this paper we study the long time behaviour of solutions of 
the linearized incompressible Navier Stokes equations in the half plane $\rit \times \rit_+$,
near a shear flow $U$, namely of solutions $v$ to 
 \beq \label{NS1} 
\partial_t v + (U \cdot \nabla) v +  (v \cdot \nabla) U - \nu \Delta v + \nabla p = 0,
\eeq
\beq \label{NS2}
\nabla \cdot v = 0 ,
\eeq
together with the Dirichlet boundary condition
\beq \label{NS3} 
v = 0 \qquad \hbox{for} \qquad y = 0
\eeq
and initial data $v(0,\cdot) = v_0(\cdot)$.

In all this paper, we will assume that $U(y) = (U_s(y),0)$, where $U_s$ is a given concave analytic function, with $U_s(0) = 0$, $U_s'(0) \ne 0$ and
 such that $U_s(y)$ converges exponentially fast to some constant $U_+ > 0$ when $y$ goes to $+ \infty$.
This in particular includes the exponential profile $U_s(y) = U_+ (1 - e^{-y})$.
Note that, according to Squire's theorem, the three dimensional stability problem reduces to the two dimensional one, thus, as long as we
consider  the {\it linear} stability, there is no difference between the three dimensional and the two dimensional cases. 

In the recent years, many breakthroughs \cite{Helffer,Bed1,Chen},
have been done in the study of these solutions in the periodic setting $x \in \tit$, namely 
for horizontal wave numbers $\alpha$ away from $0$, and in particular in the study of the so called {\it enhanced dissipation}: because of the
interaction between transport and diffusion, solutions of linearized Navier Stokes equations
 decay faster than that of the heat equation.

In this paper we focus on the opposite case, namely on small $| \alpha |$, for which a complete different phenomena, namely 
{\it instability}, occurs: instead
of being damped to $0$, some solutions increase exponentially fast.
This instability is fundamental in the understanding of the behavior of shear layers at high Reynolds number (i.e. low viscosity $\nu$) and of the
transition from laminar flows to turbulent ones. Its study in physics began at the end of the nineteenth century, when Reynolds and Rayleigh
began to investigate the stability of inviscid shear layers (with $\nu = 0$). During the first half of the twentieth century, Prandtl, followed by Orr, Sommerfeld,
Tollmien, Schlichting and C.C. Lin, to quote only a few, progressively understood that any non trivial shear layer was linearly unstable in the small
viscosity regime (see for instance \cite{Drazin},\cite{Reid} for an historical presentation and formal computations).

\medskip

As the viscosity goes to $0$, two kinds of instabilities appear, depending on whether the shear flow $U$ is linearly stable when $\nu = 0$ or not:

\begin{itemize}

\item Some flows are spectrally unstable for linearized Euler equations (namely for $\nu = 0$). According to Rayleigh's criterium, this implies that
they have an inflection point. In this case they remain linearly unstable provided the viscosity is small enough. They are even nonlinearly unstable
\cite{GN2}: unstable modes grow with a speed $O(1)$, leading to nonlinear perturbations which reach a size $O(1)$ in $L^\infty$. Their typical size 
in $x$ and $y$ is also $O(1)$.
 
 \item Other flows are spectrally stable for linearized Euler equations, for instance concave flows, which do not have inflection points.
 However, even for these flows, provided $\nu$ is small enough, there exists unstable modes for the linearized Navier Stokes equations.
 That is, adding viscosity {\it destabilizes} the flow. The existence of such unstable modes have been mathematically shown in \cite{GGN3}.
 They are characterized by a very slow growth rate, of order $O(\nu^{1/4})$, much slower than the growth rate of the first kind of instabilities.
 Their typical size in $x$ is also very different. It is very large, of order $O(\nu^{-1/4})$ to $O(\nu^{-1/6})$. We thus face long wave instabilities,
 growing very slowly.
Many related questions, like for instance their nonlinear evolution,  remain open \cite{Bian5,Bian6}.
\end{itemize}

\begin{figure} \label{figure1}
\centerline{\includegraphics[width=7cm]{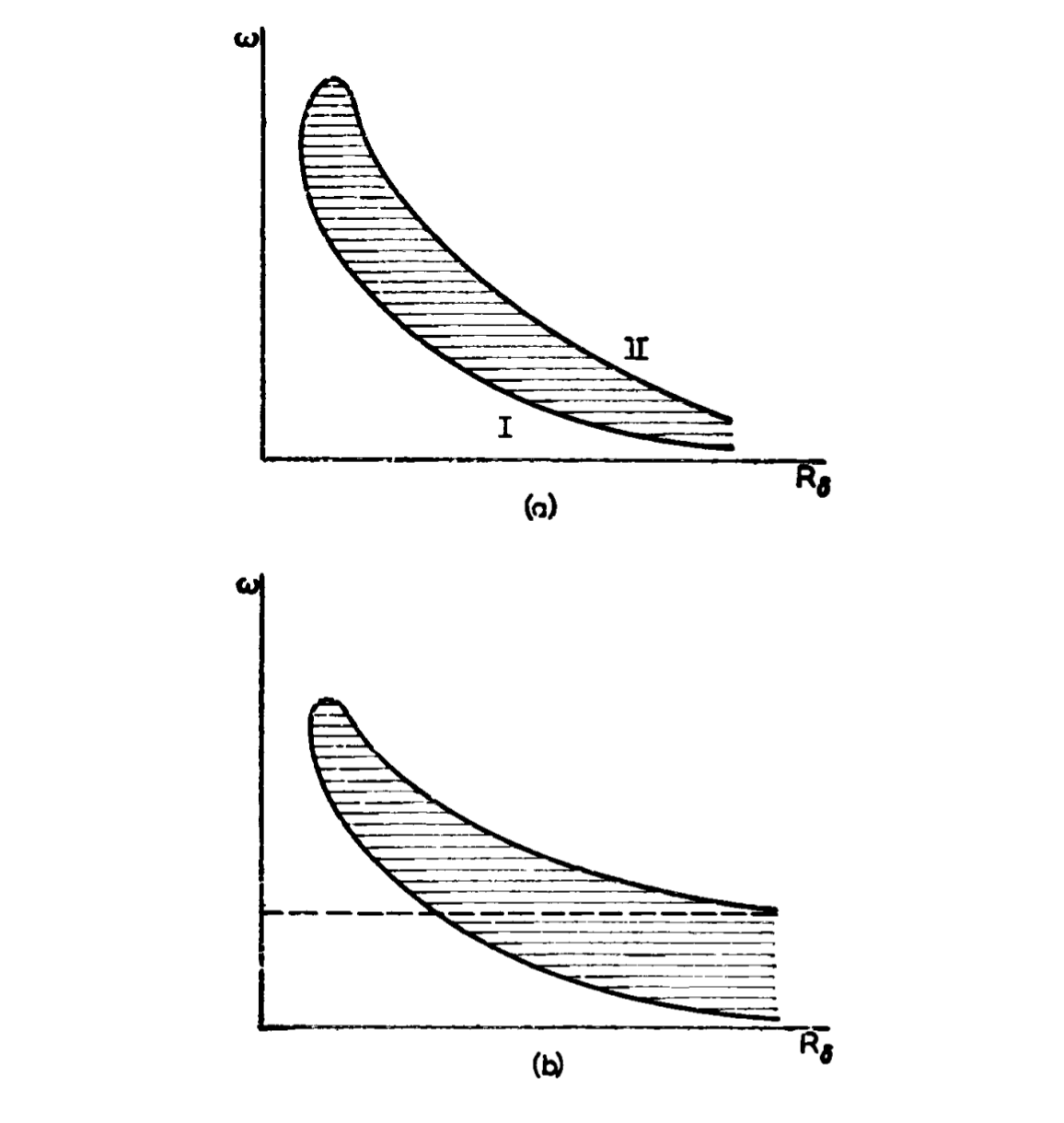}}
\caption{Area of linear instability (grey area) as a function of the Reynolds number (horizontal axis) and horizontal wave number $\alpha$ 
(vertical axis, called $\omega$), taken from L. Landau and E. Lifschitz's "Fluid Mechanics" \cite{Landau}, page $170$. Figure (a) corresponds to
our second case (no inflection point) and figure (b) to our first case (instable for Euler).   }
\label{rawdata}
\end{figure}

The two cases are clearly displayed on figure $8.8$ of the classical  source book \cite{Drazin}. We also refer to 
L. Landau's and E. Lifschitz' classical textbook \cite{Landau} (page $170$, figure $29$).
For the second kind of flows, 
all the physics of the linear instability of shear flows for Navier Stokes equations lies in the area $| \alpha | \ll 1$. In particular, it is missed
as soon as we are periodic in $x$, namely in domains of the form $\tit \times \rit_+$ or $\tit \times [0,1]$, even with a large periodicity in $x$,
since, as is clear on the figures previously quoted, there is no instability in the area $| \alpha | > \alpha_0$ provided $\nu$ is small enough
with respect to $\alpha_0$.

Namely, the horizontal periodicity of  unstable modes increase when the Rey\-nolds number increases, or equivalently the horizontal wave numbers
of unstable modes go to $0$ as the viscosity goes to $0$ (see Figure $8.8$ of  \cite{Drazin}).

In this paper we focus on this second kind of instabilities. We greatly simplify and precise \cite{GGN3}, and completely study
the evolution of  linearized solutions through a complete description of the Green function of linearized Navier Stokes equations.
This opens the way to the study of the nonlinear instability of generic shear layer profiles for the nonlinear Navier Stokes equations
and for Prandtl boundary layers \cite{Bian5,Bian6}. 

\medskip

Let us now detail our main result.
We define the linearized Navier Stokes operator $L_{NS}$ to be
$$
L_{NS} (v) =  P \Bigl( (U \cdot \nabla) v +  (v \cdot \nabla) U - \nu \Delta v  \Bigr)
$$
together with $\nabla \cdot v = 0$ and Dirichlet boundary condition, where $P$ is Leray's projection on divergence free vector fields.
We take the Fourier transform in $x$, with dual Fourier variable $\alpha$. We denote by $L_{NS,\alpha}$
the Fourier transform of $L_{NS}$ and by $v_\alpha(t)$ the Fourier transform of the solution $v(t)$ of (\ref{NS1})-(\ref{NS3}) in such a way that
\beq \label{eqvalpha}
\partial_t v_\alpha + L_{NS,\alpha} v_\alpha = 0 ,
\eeq
with initial data 
\beq \label{eqvalpha2}
v_\alpha(0,\cdot) = v_{\alpha,0}(\cdot),
\eeq
where $v_{\alpha,0}$ is the Fourier transform in $x$ of the initial data $v_0$.
The main result of this paper is the following Theorem.

\begin{theorem} \label{maintheo}
Let $U_s$ be a concave and analytic shear layer profile, such that $U_s(0) = 0$, $U_s'(0) \ne 0$,  such that $U_s(y)$ converges exponentially
fast to some constant $U_+ > 0$ as $y \to + \infty$, and such that $U_s''$ converges exponentially fast to $0$ as $y \to + \infty$, namely such that
$$
| U_s''(y) | \le C_\beta e^{-\beta \, \Re y}
$$
for every $y$ and for some  positive constant $\beta$. 
Then there exists four functions $\alpha_\pm^{def}(\nu)$ and $\alpha_\pm(\nu)$ with
$$
\alpha_-^{def}(\nu) < \alpha_-(\nu) < \alpha_+(\nu) < \alpha_+^{def}(\nu),
$$
four different positive constants $C_\pm^{def}$ and $C_\pm$, a function $\lambda(\alpha,\nu)$,
and a projection operator $P_{\alpha,\nu}$, such that, for any initial data $v_0$ and any small enough $\alpha$ and $\nu$,

\begin{itemize}

\item[i)]  if  $| \alpha | \le \alpha_-^{def}(\nu)$
or $| \alpha | \ge \alpha_+^{def}(\nu)$ then 
\beq \label{project1}
P_{\alpha,\nu} v_{\alpha,0} = 0,
\eeq
\item[ii)] if $\alpha_-^{def}(\nu) \le | \alpha | \le \alpha_+^{def}(\nu)$, then
\beq \label{project2}
P_{\alpha,\nu} v_{\alpha,0} = e^{\lambda(\alpha,\nu) t} \Bigl( v_{\alpha,0} | v^\star(\alpha,\nu) \Bigr)_{L^2} v(\alpha,\nu) ,
\eeq
where $v(\alpha,\nu)$ is an eigenvector of $L_{NS,\alpha}$ with corresponding eigenvalue $\lambda(\alpha,\nu)$,
and where $v^\star(\alpha,\nu)$ is an eigenvector of the adjoint operator $L_{NS,\alpha}^\star$ 
with the same eigenvalue, and unit norm.
\end{itemize}
Moreover, 
as $\nu \to 0$,   
\beq 
\alpha_-^{def}(\nu) \sim C_-^{def} \nu^{1/4}, \qquad \alpha_+^{def}(\nu) \sim C_+^{def} \nu^{1/6},
\eeq
\beq 
\alpha_-(\nu) \sim C_- \nu^{1/4}, \qquad \alpha_+(\nu) \sim C_+ \nu^{1/6}.
\eeq
When $\alpha_-(\nu) < |\alpha| < \alpha_+(\nu)$ then 
\beq
 \Re \lambda(\alpha, \nu) > 0,
\eeq
and
\beq \label{maxmagnitude}
\sup_\alpha  | \Re \lambda(\alpha,\nu) | \sim C_1 \nu^{1/2} 
\eeq
for some positive constant $C_1$. On the contrary, for $|\alpha| \notin [\alpha_-(\nu),\alpha_+(\nu)]$, $\Re \lambda(\alpha,\nu) < 0$.

Moreover, $v_\alpha(t)$, solution of (\ref{eqvalpha}) with initial data $v_\alpha(0,\cdot) = v_{\alpha,0}(\cdot)$, satisfies
\beq \label{asymptotic}
v_\alpha (t,y) =  P_{\alpha,\nu} v_{\alpha,0}(y)
+ \int_0^{+\infty} G_{\alpha,\nu}(t,x,y) \, v_{\alpha,0}(x) \, dx,
\eeq
where the Green function $G_{\alpha,\nu}(t,x,y)$ satisfies
$$
| G_{\alpha,\nu}(t,x,y) | \lesssim  C \eta(\alpha,\nu) \exp( -  \theta(\alpha,\nu) \langle t \rangle).
$$
In this formula, if $\alpha_-^{def}(\nu) < |\alpha| < \alpha_+^{def}(\nu)$, then
the decay rate $\theta(\alpha,\nu)$ equals
$$
\theta(\alpha,\nu) = C_\theta  \nu^{1/3} \alpha^{2/3}
$$
and  the amplification factor $\eta(\alpha,\nu)$ equals
$$
\eta(\alpha,\nu) = C_\eta {\alpha^{5/3} \over \nu^{2/3}} 
$$
for some positive constants $C_\theta$ and $C_\eta$. 

If, on the contrary, $|\alpha| < \alpha_-^{def}(\nu)$ or $|\alpha| > \alpha_+^{def}(\nu)$, then $\theta(\alpha,\nu) = C_\theta \nu^{1/2}$ 
and $\eta(\alpha,\nu) = C_\eta \nu^{-1/4}$.
\end{theorem}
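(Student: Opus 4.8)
The plan is to reduce the evolution problem (\ref{eqvalpha})--(\ref{eqvalpha2}) to an explicit study of the resolvent of the Orr--Sommerfeld operator, to build that resolvent by matched asymptotics in the parameters $\alpha$ and $\nu$, and then to recover the representation (\ref{asymptotic}) by an inverse Laplace transform. After the Fourier transform in $x$ one writes $v_\alpha=\nabla^\perp\psi_\alpha=(\partial_y\psi_\alpha,-i\alpha\psi_\alpha)$, which is automatically divergence free and meets (\ref{NS3}) exactly when $\psi_\alpha(0)=\psi_\alpha'(0)=0$; the vorticity $\omega_\alpha=(\partial_y^2-\alpha^2)\psi_\alpha$ then satisfies the Orr--Sommerfeld equation
$$
\bigl(U_s(y)-c\bigr)(\partial_y^2-\alpha^2)\psi_\alpha-U_s''\,\psi_\alpha-\frac{\nu}{i\alpha}(\partial_y^2-\alpha^2)^2\psi_\alpha=F,\qquad c:=-\frac{\lambda}{i\alpha},
$$
with $\psi_\alpha(0)=\psi_\alpha'(0)=0$, $\psi_\alpha$ decaying at $+\infty$, and $F$ built from the source. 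Inverting this equation and returning to the velocity variable yields the resolvent $R_\alpha(\lambda):=(\lambda+L_{NS,\alpha})^{-1}$, and $v_\alpha(t)=\frac{1}{2\pi i}\int_\Gamma e^{\lambda t}\,R_\alpha(\lambda)\,v_{\alpha,0}\,d\lambda$ on a vertical contour $\Gamma$ lying to the right of all poles of $R_\alpha(\cdot)$.

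Next I would construct a basis of four solutions of the homogeneous Orr--Sommerfeld equation. Two \emph{slow} solutions $\psi_{s,\pm}$ are obtained as analytic-in-$\nu$ perturbations of the two solutions $\psi_{Ray,\pm}$ of the inviscid Rayleigh equation $(U_s-c)(\partial_y^2-\alpha^2)\psi-U_s''\psi=0$: here the hypothesis that $U_s$ is concave — no inflection point, hence, by Rayleigh's criterion, no embedded neutral inviscid mode — is exactly what lets one control $\psi_{Ray,\pm}$, their Wronskian and the $\nu$-expansion uniformly for small $\alpha$ and for $c$ in the relevant range, away from the critical layer $U_s(y_c)=c$, while the decay $|U_s''(y)|\le C_\beta e^{-\beta\Re y}$ is used to carry the construction to complex $y$ and to $y\to+\infty$. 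Two \emph{fast} solutions $\psi_{f,\pm}$ are built by Langer's (Airy-function) method, simultaneously resolving the viscous wall sublayer of width $\sim(\nu/|\alpha|)^{1/3}$ and the critical layer near $y_c$. The technical heart of the construction is the matching of the fast solutions to the slow ones across the critical layer — this is what generates the $O(\nu^{1/4})$ Tollmien correction used below — together with error bounds that must be uniform over the entire range $\nu^{1/4}\lesssim|\alpha|\lesssim\nu^{1/6}$.

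With this basis, $R_\alpha(\lambda)$ is a bilinear combination of $\psi_{s,\pm}$ and $\psi_{f,\pm}$, the two conditions at $y=0$ and the decay at $+\infty$ fixing the coefficients, so its only singularities in $\lambda$ are the zeros of a Wronskian-type Evans function $\DF(\alpha,\nu,\lambda)$. Expanding $\DF$, its inviscid part is Tollmien's dispersion relation — whose leading behaviour for a concave profile at small $\alpha$ is explicit — and the viscous contribution of the Airy layer enters at relative order $\nu^{1/4}$. Solving $\DF=0$ then shows that there is exactly one root $\lambda(\alpha,\nu)$ near the relevant part of the spectrum if and only if $\alpha_-^{def}(\nu)\le|\alpha|\le\alpha_+^{def}(\nu)$, the endpoints $\alpha_\pm^{def}$ and the neutral points $\alpha_\pm$ being given to leading order by the classical Tollmien--Lin lower- and upper-branch scalings $\nu^{1/4}$ and $\nu^{1/6}$ — the two distinguished limits in which $\DF$ degenerates — with $\alpha_\pm$ the genuine roots of $\Re\lambda=0$ and $\alpha_\pm^{def}$ the endpoints beyond which the discrete eigenvalue is absorbed into the essential spectrum. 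Since the root is simple, the residue of $R_\alpha$ at $\lambda(\alpha,\nu)$ is the rank-one projection $v_{\alpha,0}\mapsto(v_{\alpha,0}\,|\,v^\star(\alpha,\nu))_{L^2}\,v(\alpha,\nu)$ of (\ref{project2}); tracking the sign of $\Re\lambda$ along the zero curve then isolates the subinterval $[\alpha_-(\nu),\alpha_+(\nu)]$ on which $\Re\lambda>0$, yields the scaling (\ref{maxmagnitude}), and gives $\Re\lambda<0$ for $|\alpha|\notin[\alpha_-^{def},\alpha_+^{def}]$, where (\ref{project1}) holds.

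Finally I would deform $\Gamma$ onto the line $\Re\lambda=-\theta(\alpha,\nu)$. The uniform resolvent bounds obtained along the way — in particular the pseudospectral / enhanced-dissipation estimate that $R_\alpha(\lambda)$ stays under control for $\Re\lambda\ge-c\,\nu^{1/3}|\alpha|^{2/3}$ away from $\lambda(\alpha,\nu)$ — justify this deformation; one picks up the residue, i.e. the $P_{\alpha,\nu}v_{\alpha,0}$ term of (\ref{asymptotic}), exactly when $\lambda(\alpha,\nu)$ lies to the right of the shifted line, and the remaining integral is the Green-function term, its amplitude $\eta(\alpha,\nu)$ read off from the norm of $R_\alpha$ on $\Re\lambda=-\theta$ and the effective length of integration: $\theta\sim C_\theta\nu^{1/3}\alpha^{2/3}$ and $\eta\sim C_\eta\,\alpha^{5/3}\nu^{-2/3}$ in the band $\alpha_-^{def}<|\alpha|<\alpha_+^{def}$, and $\theta\sim C_\theta\nu^{1/2}$, $\eta\sim C_\eta\nu^{-1/4}$ outside it, where no eigenvalue contributes. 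I expect the main obstacle to be the second and third steps: producing the fast and slow fundamental solutions and the Evans function with errors controlled \emph{uniformly} across the whole range $\nu^{1/4}\lesssim|\alpha|\lesssim\nu^{1/6}$, carrying out the critical-layer matching, and extracting from $\DF=0$ the exact transition exponents and the constants $C_\pm^{def}$, $C_\pm$ and $C_1$; once $\DF$ and these uniform bounds are in hand, the contour shift of the last step is comparatively routine.
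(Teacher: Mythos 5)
Your proposal follows essentially the same route as the paper: reduce to Orr--Sommerfeld via the stream function, build two slow solutions perturbatively from Rayleigh and two fast solutions via Langer/Airy, identify the eigenvalue as a simple zero of the Wronskian dispersion relation $W[\phi_{s,-},\phi_{f,-}](0)=0$ (equivalently the Tietjens/Tollmien relation), extract the $\nu^{1/4}$ and $\nu^{1/6}$ branch scalings from its two distinguished limits, and recover \eqref{asymptotic} by shifting the Dunford contour past the unique pole, reading off $\theta$ and $\eta$ from the residual integral. The only ingredient you gloss over that the paper makes crucial is that the slow Orr--Sommerfeld solutions, unlike the Rayleigh solutions, are genuinely holomorphic across $y_c$ (since the full fourth-order equation is regular there), which is precisely what permits pushing the contour below $\Im c=0$ and obtaining exponential rather than merely algebraic decay of the remainder.
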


In other terms, in the range $[\alpha_-(\nu),\alpha_+(\nu)]$, the solution $v_\alpha(t)$ is the sum of an exponentially growing mode,
with slow growth rate $\Re \lambda(\alpha,\nu)$, and of an exponentially decaying remainder, with slow decay rate $\theta(\alpha,\nu)$.
In the region $[\alpha_-^{def}(\nu),\alpha_+^{def}(\nu)]$, the operator $L_{NS}$ has only one eigenvalue with an imaginary part greater than 
$\theta$. This eigenvalue is the only unstable eigenvalue between $\alpha_-(\nu)$ and $\alpha_+(\nu)$.  
Much more detailed descriptions of the eigenvalue $\lambda(\alpha,\nu)$ and of the Green function $G_{\alpha,\nu}$ 
are given in sections $7$ and $8$.  
 In particular the numerical values of the constant $C_-$ and $C_+$ are given in formula
(\ref{branch1}) and (\ref{branch2}).
The projection on the adjoint mode is detailed in section \ref{adjointNS}.

The concavity assumption may be relaxed, up to additional technicalities.
Note that a similar theorem holds true if $U_s'(0) = 0$ but $U_s''(0) \ne 0$, with exponents $1/4$ and $1/10$ instead of $1/4$ and $1/6$ 
(see \cite{Drazin}, \cite{Reid} and \cite{Benoit}). 

\medskip

This result proves the existence of a "spectral gap" $\theta(\alpha,\nu)$, which is of order $\nu^{1/2}$ when $\alpha(\nu) \sim \nu^{1/4}$, 
and of order $\nu^{4/9}$  when $\alpha(\nu) \sim \nu^{1/6}$. This spectral gap is thus always smaller than the real part of the unstable mode given by
(\ref{maxmagnitude}).  

Note that $\eta(\alpha,\nu)$ can be interpreted as an amplification factor. As linearized Navier Stokes operator is not self-adjoint, solutions
may have a transient increase before decreasing exponentially. This amplification factor $\eta(\alpha,\nu)$ is of magnitude at most $\nu^{-1/4}$.

\medskip

To prove this theorem we directly study the Green's function of linearized Navier Stokes equations, and give explicit bounds on it.
The plan of the paper is as follows.
The second part of the paper is devoted to the introduction of the classical Rayleigh and Orr Sommerfeld equations, the third part to the
detailed construction of solutions for Rayleigh equation, the fourth part to the investigation of Airy equation and to a first inversion of
Orr Sommerfeld operator in the case of fast decaying source terms, leading to the construction of "fast solutions" to Orr Sommerfeld equations. 
In the fifth part we construct "slow" solutions
of Orr Sommerfeld equations and   build an approximate Green function for Orr Sommerfeld equations in section $6$.
Section $7$ is devoted to the investigation of  the relation dispersion of linearized Navier Stokes equations, section $8$ to a brief
study of the adjoint of Orr Sommerfeld, 
and  section $9$ to a complete study of the genuine Green function of $L_{NS,\alpha}$,
which ends the proof of Theorem \ref{maintheo}. The Appendix is devoted to Airy and Tietjens functions, and to an iteration procedure which
is used several times in this article.

\subsubsection*{Notations}

In all this paper we will denote 
$$
\langle t \rangle = 1 + | t |,
$$
and we will say that $f \lesssim g$ if there exists some positive constant such that $|f| \le C g$.

In all the paper we will consider functions which are holomorphic in a "pencil like" neighborhood of $\rit_+$, namely,
functions which are holomorphic on 
$$
\Gamma_{\beta_0,\beta_1} = \Bigl\{ z \in \cit \quad | \quad | \Im z | \le \min(\beta_0 \, \Re z, \beta_1), \quad \Re z \ge 0 \Bigr\}
$$
for some positive constants $\beta_0$ and $\beta_1$.

We will consider functions which have a logarithmic branch at some $y_c \in \Gamma_{\beta_0,\beta_1}$. 
We thus introduce
$$
\Gamma_{\beta_0,\beta_1}(y_c) = \Gamma_{\beta_0,\beta_1} - \Bigl\{ y_c - i \rit_+ \Bigr\}.
$$
Let $\sigma > 0$.
We define $X_\sigma$ to be the space of functions $f$ which are holomorphic on $\Gamma_{\beta_0,\beta_1}$ and such that
$$
\| f \|_{X_\sigma} = \sup_{y \in \Gamma} | f(y) |  e^{| \sigma | y} < + \infty.
$$
Similarly we define $X_\sigma(y_c)$ to be the space of functions which are holomorphic on $\Gamma_{\beta_0,\beta_1}(y_c)$ 
with the same norm.

Other function spaces will be defined in section \ref{further}.


\section{Orr Sommerfeld equations \label{part2}}


Let us first introduce the classical Orr Sommerfeld and Rayleigh equations.
  We refer to \cite{Reid} and \cite{GN} for more details on all these aspects.
We recall that the solution $v(t)$ of linearized Navier Stokes equations 
is given by Dunford's formula
\beq \label{contour1}
v(t,x) = {1 \over 2 i \pi} \int_\Gamma e^{\lambda t} (L_{NS} + \lambda)^{-1} v_0 \; d\lambda
\eeq
in which $\Gamma$ is a contour on the "right" of the spectrum.
 We are therefore led to study the resolvent of $L_{NS}$, namely to
study the equation
\beq \label{resolvant}
(L_{NS} + \lambda) v = f,
\eeq
where $f$ is a given forcing term and $\lambda$ a complex number.
To take advantage of the divergence free condition, we introduce the stream function $\psi$ of $v$ and take its Fourier
transform in $x$, with dual variable $\alpha$, which leads to look for solutions of (\ref{resolvant}) of the form
$$
v = \nabla^\perp \Bigl( e^{i \alpha x  } \psi(y) \Bigr) .
$$
According to the traditional notation \cite{Reid}, we introduce the complex number $c$, defined by  
$$
\lambda = - i \alpha c .
$$
We also take the Fourier transform of the forcing term $f$
$$
f = \Bigl( f_1(y),f_2(y) \Bigr) e^{i \alpha x  } .
$$
Taking the curl of (\ref{resolvant}), we get the classical Orr Sommerfeld equation
\beq \label{Orrmod0}
Orr_{\lambda,\alpha,\nu}(\psi) =  (U_s - c)  (\partial_y^2 - \alpha^2) \psi - U_s''  \psi  
- { \nu \over i \alpha}   (\partial_y^2 - \alpha^2)^2 \psi =  i {\nabla \times f \over \alpha}
\eeq
where
$$
\nabla \times (f_1,f_2) = i \alpha f_2 - \partial_y f_1.
$$
The Dirichlet boundary condition
gives 
\beq \label{condOrr}
\psi(0) = \partial_y \psi(0) = 0.
\eeq
Let us define
\beq \label{epsilon}
\eps = {\nu \over i \alpha} .
\eeq
We note that when $\nu$ goes to $0$, Orr Sommerfeld equation degenerates into the classical Rayleigh equation
\beq \label{Rayleigh}
   (U_s - c)  (\partial_y^2 - \alpha^2) \psi - U_s''  \psi  
=  i {\nabla \times f \over \alpha}.
\eeq
It is in fact more interesting to define
\beq \label{tildec}
 \tilde c = c - 2 \eps \alpha^2,
\eeq 
and the modified Rayleigh operator by
\beq \label{Rayleigh2}
Ray \, \psi = (U_s - \tilde c) (\partial_y^2 - \alpha^2) \psi - V \psi
\eeq
where
\beq \label{defiV}
V = U_s'' + \eps \alpha^4 .
\eeq
Note that the modified Rayleigh operator and the genuine one coincide when $\nu = 0$.
From now one we drop the tilde from $\tilde c$.
Note also that Orr Sommerfeld equation may be seen as a small viscous perturbation of the modified Rayleigh's equation since
\beq \label{introDiff} 
Orr_{\alpha,c,\nu} = Ray - \eps \partial_y^4.
\eeq
This perturbation is very singular since, in the regime we consider,
 both $\eps \to 0$ and $\min(U_s - c) \to 0$, thus $Orr_{\alpha,c,\nu}$ degenerates to $-U_s'' \psi$, loosing four derivatives
 in this limiting process.
 
 Orr Sommerfeld equation may also be considered as a perturbation of the classical Airy equation. Namely let
 \beq \label{defiAiryop}
 {\rm Airy}(\psi) = (U_s -  c) \psi - \eps \partial_y^2 \psi
 \eeq
 and
 $$
 {\cal A} = {\rm Airy} \circ \partial_y^2  .
 $$
 Then
 \beq \label{introAiry}
 Orr_{\alpha,c,\nu} = {\cal A} - {\cal A}_0
 \eeq
 where ${\cal A}_0$ is the zeroth order, multiplication operator
 $$
 {\cal A}_0  = U_s'' + \eps \alpha^4   .
 $$
 Part of the analysis is to play between the two decompositions (\ref{introDiff}) and (\ref{introAiry}).
 The first decomposition considers $- \eps \partial_y^4$ as a perturbation of the second order Rayleigh operator, and is interesting away
 from the so called "critical layer" $y_c$ defined by
 $$
 U_s(y_c) = c.
 $$ 
 The second one considers zeroth order terms as a perturbation of second and fourth order derivatives,
 and is interesting near the "critical layer", where there is a $(y-y_c) \log(y - y_c)$ singularity.

\medskip

A key point of this work is to construct four independent solutions to the Orr Sommerfeld equations, without taking into account
their boundary conditions.
A first step is to study their behavior at infinity. Asymptotically, solutions of Orr Sommerfeld equations should behave like
exponentials $e^{-\mu y}$, where $\mu$ satisfies
\beq \label{disperinfini}
\Bigl[ U_s - c - \eps (\mu^2 - \alpha^2) \Bigr] (\mu^2 - \alpha^2) = 0.
\eeq
Thus $\mu = \pm \mu_s$ or $\pm \mu_f$ with
\beq \label{disperinfini2}
\mu_s = \alpha, \qquad \mu_f = \sqrt{\alpha^2 + {i \alpha \over \nu } (U_+ - c) }.
\eeq
We note that $\Re \mu_s$ and $\Re \mu_f$ do not change sign provided $c$ remains small enough and that $| \mu_f | \gg 1$.
As a consequence, there exists two independent solutions which behave "slowly" and two independent solutions which display
a "fast" behaviour. The "slow" solutions can be constructed starting from solutions of Rayleigh equation, namely neglecting $- \eps \partial_y^4$
and using (\ref{introDiff}),
and the "fast" solutions" can be constructed starting from solutions of Airy equation, namely neglecting zeroth order terms
and using (\ref{introAiry}).


\section{Linearized Euler equations \label{studyRay0}}



\subsection{Asymptotic behavior}


The aim of this section is to derive optimal bounds on solutions of linearized Euler equations in the case of monotonic concave profiles 
through a detailed analysis of Rayleigh equation, as a warm up for the Navier Stokes case.
Let $v$ be a solution to the linearized Euler equations
\beq \label{linearEuler1}
\partial_t v + (U \cdot \nabla) v + (v \cdot \nabla) U + \nabla q = 0,
\eeq
\beq \label{linearEuler2}
\nabla \cdot v = 0
\eeq
with initial data $v(0,\cdot) = v_0(\cdot)$ and classical boundary conditions $v \cdot n = 0$ at $y = 0$, where
$n = (0,1)$, and $v \to 0$ as $y \to + \infty$.
As usual we take the Fourier transform in $x$, with dual variable $\alpha$, and introduce the stream function $\psi_\alpha(t,x)$.
We will prove the following result.

\begin{theorem} \label{growthRayleigh}
Assume that $U_s(y)$ is a concave and analytic profile, with $U_s(0) = 0$, $U_s'(0) \ne 0$,
which converges exponentially fast to some positive constant $U_+$ as $y \to + \infty$, and such that $U_s''$ converges
exponentially fast to $0$ as $y \to + \infty$. We moreover assume that there exists no purely imaginary eigenvalue to Rayleigh operator.
Then, provided $| \alpha |$ is small enough, we have
\beq \label{result1}
| \alpha | \, \| \psi_\alpha(t, \cdot) \|_{L^\infty} + \| \partial_x \psi_\alpha(t, \cdot) \|_{L^\infty}  = O(\langle \alpha t \rangle^{-1}).
\eeq
\end{theorem}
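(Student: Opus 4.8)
The plan is to obtain the decay estimate \eqref{result1} by representing the stream function via Dunford's formula applied to the Rayleigh resolvent and by a careful analysis of the Green function of the Rayleigh operator $Ray$ along a well-chosen contour in the spectral parameter $c$. First I would recall that the evolution of $\psi_\alpha$ is governed by the linearized Euler equation in vorticity form, i.e. $\partial_t (\partial_y^2 - \alpha^2)\psi_\alpha + i\alpha\bigl(U_s(\partial_y^2-\alpha^2)\psi_\alpha - U_s''\psi_\alpha\bigr) = 0$, so that $\psi_\alpha(t) = \frac{1}{2i\pi}\int_\Gamma e^{-i\alpha c t} (Ray_{\alpha,c})^{-1} \omega_{\alpha,0}\, \alpha\, dc$ where $\omega_{\alpha,0} = (\partial_y^2-\alpha^2)\psi_{\alpha,0}$ and $\Gamma$ is a contour encircling the spectrum of $U_s$, i.e. a contour around the real segment $[0,U_+]$ together with any discrete eigenvalues. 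The hypothesis that $U_s$ is concave (no inflection point, hence spectrally stable by Rayleigh's criterion) and that there is no purely imaginary eigenvalue means that for $|\alpha|$ small there are no eigenvalues with $\Re(-i\alpha c) \ge 0$; one must still control the contribution of the continuous spectrum, which is the whole point.

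The key steps, in order, would be: (1) using the detailed construction of solutions of the Rayleigh equation from Section~\ref{studyRay0} (the "slow" solutions $\psi_\pm$ behaving like $e^{\mp\alpha y}$ at infinity, holomorphic in the pencil neighborhood $\Gamma_{\beta_0,\beta_1}$ away from the critical layer $y_c$ where $U_s(y_c)=c$), write the Green function $G_{Ray}(\alpha,c;y,z)$ explicitly as the usual combination $\psi_-(y_<)\psi_+(y_>)/W$ adapted to the Dirichlet/decay boundary conditions, with $W$ the Wronskian; (2) deform the contour $\Gamma$ in the $c$-plane onto (or just below/above) the real axis, picking up the jump of the resolvent across $[0,U_+]$, which is governed by the $(y-y_c)\log(y-y_c)$ singularity at the critical layer and thus by the imaginary part of the logarithm — concretely this produces an integral over $c\in[0,U_+]$ of $e^{-i\alpha c t}$ against a density that is bounded (and with bounded first derivative in $c$, uniformly for small $\alpha$) because concavity prevents vanishing of $U_s'$ at the critical layer and the no-eigenvalue hypothesis keeps the Wronskian $W$ bounded away from $0$; (3) integrate by parts once in $c$: the factor $e^{-i\alpha ct}$ contributes $1/(\alpha t)$ while the boundary terms at $c=0$ and $c=U_+$ and the $dc$-derivative of the density stay $O(1)$ in the relevant norms — this is exactly the mechanism producing the $\langle\alpha t\rangle^{-1}$ rate. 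One should do this separately for $|\alpha|\,\|\psi_\alpha\|_{L^\infty}$ and for $\|\partial_x\psi_\alpha\|_{L^\infty} = |\alpha|\,\|\psi_\alpha\|_{L^\infty}$ — actually these two quantities differ only by the inclusion of $\partial_x$, so the same bound on $|\alpha|\,\|\psi_\alpha\|_{L^\infty}$ suffices; the small-$t$ regime $\langle\alpha t\rangle\lesssim 1$ is handled trivially by a uniform energy/elliptic bound $\|\psi_\alpha(t)\|_{L^\infty}\lesssim |\alpha|^{-1}\|\omega_{\alpha,0}\|$.

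The main obstacle, I expect, is step (2): controlling the density of the spectral measure — equivalently the jump $[G_{Ray}]$ of the Green function across the continuous spectrum — uniformly as $\alpha\to 0$, and in particular ensuring that it has one $c$-derivative bounded uniformly in $\alpha$ so that the integration by parts in step~(3) is legitimate. This requires (i) uniform-in-$\alpha$ lower bounds on the Wronskian $W(\alpha,c)$, which is where the absence of embedded and purely imaginary eigenvalues and the concavity (Rayleigh criterion) are genuinely used, and (ii) control of how the slow solutions $\psi_\pm$ and their $c$-derivatives behave as the critical layer $y_c$ approaches the boundary $y=0$ (i.e. as $c\to 0$) and as $y_c\to+\infty$ (i.e. as $c\to U_+$), where $U_s'(y_c)\to$ a constant but $U_s''$ decays — here the exponential decay assumptions on $U_s-U_+$ and $U_s''$ are needed to make the estimates uniform. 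Once the Green function and its $c$-derivative are controlled in $X_\sigma(y_c)$-type norms, the decay estimate follows by the stationary-phase / integration-by-parts argument sketched above, and this computation also serves, as the text says, as the warm-up template for the Orr--Sommerfeld (Navier--Stokes) analysis in the later sections.
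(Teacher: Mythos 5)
Your proposal follows essentially the same route as the paper: Dunford's formula, the explicit Rayleigh Green function built from the slow solutions $\psi_{\pm,\alpha,c}$, isolation of the $(y-y_c)\log(y-y_c)$ singularity, and deformation of the $c$-contour near the continuous spectrum, with the $\langle\alpha t\rangle^{-1}$ rate traced back to the logarithmic branch at the critical layer and the no-eigenvalue hypothesis keeping the Wronskian non-degenerate. The one place your sketch is imprecise is the integration by parts in $c$: the jump of $G_{\alpha,c}$ across $[0,U_+]$ is not a bounded density with a uniformly bounded $c$-derivative, because the prefactor $(U_s(x)-c)^{-1}$ in $G^{int}_{\alpha,c}$ produces a genuine pole at $c=U_s(x)$. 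The paper therefore decomposes $G^{int}$ into pieces $G_1,\dots,G_4$ according to the holomorphic versus log-branch parts of $\psi_{\pm}$, and treats the pole separately: crossing it when shifting $\Gamma$ downward yields the residue $G_1 = e^{-i\alpha U_s(x)t}\,P_{-,\alpha,U_s(x)}(y-x)\,P_{+,\alpha,U_s(x)}(0)+O(e^{-\eps t})$, which does not decay pointwise in $t$; its contribution to $\psi_\alpha(t,y)=\int G_\alpha(t,x,y)\,\omega_{\alpha,0}(x)\,dx$ only becomes $O(\langle\alpha t\rangle^{-1})$ after a non-stationary-phase integration by parts in $x$ (using $U_s'\neq 0$) against the initial data. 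Your $c$-only integration by parts would not capture this residue term, so you would need to split it off exactly as the paper does; once that is done, the remaining log-branch contributions $G_2,G_3,G_4$ and the boundary corrector $G^b$ are handled by the mechanism you describe.
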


Note that we have only a slow decay, in $\langle t \rangle^{-1}$.
This result has already been proved for general, non holomorphic profiles  in \cite{Bed2,Bian3,Ionescu1,Wei}.
As $U_s(y)$ is monotonic and holomorphic, the proof turns out to be much simpler.
It is based on a careful analysis of
Rayleigh's equation without forcing term and without boundary condition
\beq \label{Rayleighwithout}
(U_s - c) (\partial_y^2 - \alpha^2) \psi - U_s'' \psi = 0 .
\eeq
We will only consider positive $\alpha$, the other case being identical.
At infinity, if $c \ne U_+$, as $U_s''$ decays exponentially fast to $0$,  there exist two solutions $\psi_{+,\alpha,c}$ 
and $\psi_{-,\alpha,c}$ which asymptotically  behave  like $e^{\alpha y}$ and $e^{- \alpha y}$.

As $U_s$ is monotonic, it is well known that there exists no eigenvalue $c$ with $\Im c > 0$ (so called Rayleigh's inflection point
theorem \cite{Reid}). Hence the essential spectrum of Rayleigh's operator reduces to the range of $U_s$ and the point spectrum
reduces to a finite number of possible imaginary eigenvalues
\beq \label{spectrumRayleigh}
\sigma_{Rayleigh} = Range(U_s) \cup \sigma_P .
\eeq
To simplify the analysis, we assume that $\sigma_P = \emptyset$.
The next section is devoted to the construction of particular solutions for Rayleigh's equation, which are then used to construct
an explicit Green function for this equation. Then, the study of Dunford's integral leads to the desired decay on $\psi_\alpha(t)$.
The key point of this approach is to study the singularity of the Green function in $y$ and $c$.


\subsection{Study of Rayleigh's equation} 


\begin{theorem} \label{Rayexp}
For small $|\alpha|$, there exists two independent solutions $\psi_{\pm,\alpha}(y)$ of Rayleigh equation, defined
on  $\Gamma_{\beta_0,\beta_1}(y_c)$, with unit Wronskian, which behave like
$O(e^{\pm | \alpha | y})$ when $y \to + \infty$. Near $y_c$, they can be written under the form
\beq \label{psismally}
\psi_{\pm,\alpha}(y) = P_{\pm,\alpha}(y - y_c,c) + (y - y_c) \log(y - y_c) Q_{\pm,\alpha}(y - y_c,c) 
\eeq
where $P_{\pm,\alpha}$ and $Q_{\pm,\alpha}$ are holomorphic functions of their two arguments, and
where $y_c$ is the critical layer, defined by $U_s(y_c) = c$.
Moreover, 
\beq \label{sizesingularity}
Q_{-,\alpha} = O(c), \qquad Q_{+,\alpha} = O(1), 
\eeq
and, for small $\alpha$ and $c$, 
\beq \label{psippsi}
{\psi_{-,\alpha}'(0) \over \psi_{-,\alpha}(0)} = - {U_s'(0) \over c} - {\alpha \over c^2}  (U_+ - c)^2
+ {\alpha^2 \over c^2} (U_+ - c)^4 \Omega_0(0,c) + O \Bigl( { \alpha^3 \over c^2} \Bigr) ,
\eeq  
where
\beq \label{defiOmega0}
\Omega_0(0,c) =  -{1 \over ( U_+ - c)^2} \int_0^{+ \infty} \Bigl[
{(U_s(z) - c)^2 \over (U_+ - c)^2} - {(U_+ - c)^2 \over (U_s(z) - c)^2} \Bigr] dz .
\eeq
\end{theorem}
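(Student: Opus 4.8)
The plan is to solve the inviscid equation ($\alpha=0$) explicitly, then to treat \eqref{Rayleighwithout} as a perturbation in $\alpha$ through a Volterra iteration of the type recorded in the Appendix, and finally to read off \eqref{sizesingularity} and \eqref{psippsi} from the constructed solution.

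\textbf{The case $\alpha=0$.} One checks at once that $\phi_1(y):=U_s(y)-c$ solves $(U_s-c)\psi''-U_s''\psi=0$; since this equation, divided by $U_s-c$, carries no first order term, Wronskians of solutions are constant, and reduction of order produces a second solution
\[
\phi_2(y)=\bigl(U_s(y)-c\bigr)\int^{y}\frac{dz}{\bigl(U_s(z)-c\bigr)^{2}},
\]
the lower endpoint and an additive multiple of $\phi_1$ being fixed so that $\phi_2(y)\sim y/(U_+-c)$ (with no constant term) as $y\to+\infty$ and $\phi_1\phi_2'-\phi_1'\phi_2\equiv 1$. As $U_s$ is analytic on $\Gamma_{\beta_0,\beta_1}$ and, for $|c|$ small, $U_s-c$ vanishes there only at the critical layer $y_c$, both $\phi_j$ are holomorphic on $\Gamma_{\beta_0,\beta_1}(y_c)$; expanding $U_s(z)-c=U_s'(y_c)(z-y_c)+\cdots$ under the integral sign shows that $\phi_1$ is holomorphic at $y_c$ while $\phi_2=P(y-y_c)+(y-y_c)\log(y-y_c)\,Q(y-y_c)$ with $P,Q$ holomorphic and $Q(0)=-U_s''(y_c)/U_s'(y_c)^2$ — this is the Frobenius picture with indicial roots $0$ and $1$.

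\textbf{Switching on $\alpha$.} Because $\alpha^2(U_s-c)\psi$ does not decay at infinity, one cannot perturb off $\{\phi_1,\phi_2\}$ globally; instead one rewrites \eqref{Rayleighwithout} as $(\partial_y^2-\alpha^2)\psi=\tfrac{U_s''}{U_s-c}\psi$, whose right side decays like $e^{-\beta\Re y}$ away from $y_c$, and constructs $\psi_{-,\alpha}$ as the fixed point of
\[
\psi_{-,\alpha}(y)=e^{-\alpha y}+\int_y^{+\infty}\frac{\sinh(\alpha(y-z))}{\alpha}\,\frac{U_s''(z)}{U_s(z)-c}\,\psi_{-,\alpha}(z)\,dz,
\]
the integration running along a path of $\Gamma_{\beta_0,\beta_1}(y_c)$ on the appropriate side of the critical layer. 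For $|\alpha|$ below the decay rate $\beta$, the bound $|\sinh(\alpha(y-z))/\alpha|\lesssim\langle z-y\rangle\,e^{|\alpha||\Re(z-y)|}$ together with the exponential decay of $U_s''$ makes this a contraction in a weighted space $X_\sigma(y_c)$ of functions with a controlled logarithmic singularity at $y_c$, and gives $\psi_{-,\alpha}=O(e^{-|\alpha|y})$; $\psi_{+,\alpha}$ is the analogous fixed point seeded by $e^{\alpha y}$ (equivalently, the deformation of the linearly growing $\phi_2$). Dividing by the value of the constant Wronskian $\psi_{+,\alpha}\psi_{-,\alpha}'-\psi_{+,\alpha}'\psi_{-,\alpha}$ normalizes the pair to $1$. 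The local form \eqref{psismally} with holomorphic $P_{\pm,\alpha},Q_{\pm,\alpha}$ follows as in the $\alpha=0$ case, the extra $\alpha^2(U_s-c)\psi$ being holomorphic at $y_c$; and since $\psi_{-,0}$ is proportional to the non-logarithmic solution $\phi_1$ whereas $\psi_{+,0}$ is proportional to $\phi_2$, which carries the full singularity, tracking the $\phi_2$–component of each through the iteration (it vanishes for $\psi_{-,0}$ and is of order one for $\psi_{+,0}$) yields the sizes $Q_{-,\alpha}=O(c)$ and $Q_{+,\alpha}=O(1)$ in \eqref{sizesingularity}.

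\textbf{The ratio at the wall.} Writing $\psi_{-,\alpha}=\psi_{-,0}+\alpha\chi_1+\alpha^2\chi_2+O(\alpha^3)$ with $\psi_{-,0}$ a nonzero multiple of $U_s-c$, the fact that \eqref{Rayleighwithout} contains only $\alpha^2$ forces $\chi_1$ to solve the homogeneous equation $(U_s-c)\chi_1''-U_s''\chi_1=0$, and the requirement $\psi_{-,\alpha}=O(e^{-|\alpha|y})$ at infinity then makes $\chi_1$ the precise multiple of $\phi_2$ cancelling the linear growth; while $\chi_2$ solves an inhomogeneous equation $(U_s-c)\chi_2''-U_s''\chi_2=(U_s-c)\psi_{-,0}$ whose behaviour at infinity is dictated by the quadratic term of $e^{-\alpha y}$. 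Resolving $\chi_2$ against the $\alpha=0$ Green kernel, which is built from the blocks $(U_s-c)^{\pm2}$, and performing the subtractions needed for convergence at infinity, a direct computation collapses the answer to the integrand of $\Omega_0$ in \eqref{defiOmega0}. Finally, evaluating at $y=0$ with $\phi_1(0)=-c$, $\phi_1'(0)=U_s'(0)$ and the Wronskian identity $c\,\phi_2'(0)+U_s'(0)\,\phi_2(0)=-1$, and expanding the quotient $\psi_{-,\alpha}'(0)/\psi_{-,\alpha}(0)$ (in which the overall multiple of $\psi_{-,0}$ cancels), one obtains \eqref{psippsi} together with the remainder $O(\alpha^3/c^2)$.

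\textbf{The main obstacle} is the uniform control of the iteration through the critical layer, combined with the bookkeeping of the negative powers of $c$. When $c$ is real in the range of $U_s$ the point $y_c$ sits on the integration contour, so one must deform it and fix the branch of $\log(y-y_c)$ consistently, and the estimates must be uniform as $\Im c\to0$ — this is precisely where analyticity of $U_s$ is used, to keep $P_{\pm,\alpha},Q_{\pm,\alpha}$ holomorphic at $y_c$. Moreover $\psi_{-,\alpha}(0)$ is itself of size $c$, so every estimate entering \eqref{psippsi} must be sharp enough to survive division by this small quantity, which is exactly what pins down the $1/c$ and $1/c^2$ scalings and the error term there.
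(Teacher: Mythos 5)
Your proposal is correct in outline but it takes a genuinely different route from the paper, and the difference is worth flagging. For the construction of $\psi_{\pm,\alpha}$ and the local form \eqref{psismally}, the paper works the other way round: it first builds the two Frobenius solutions $\psi_A = (y-y_c)P_A$ and $\psi_B = P_B + \tfrac{U_c''}{U_c'}(y-y_c)P_A\log(y-y_c)$ at the regular singular point $y_c$, extends them by analytic continuation, and only then selects the combination decaying like $e^{-\alpha y}$; you instead build $\psi_{-,\alpha}$ globally by a Volterra fixed point seeded with $e^{-\alpha y}$ and read off the Frobenius structure afterwards. Both yield the same object; your route requires you to justify that the weighted space $X_\sigma(y_c)$ is closed under the integral operator (the kernel has a pole at $z=y_c$, so one must check that iterating produces only $(y-y_c)\log(y-y_c)$ and not $\log^2$), which is true here because the indicial roots are $0$ and $1$, but deserves a sentence.

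The more substantial divergence is in the derivation of \eqref{psippsi}. The paper does \emph{not} expand in powers of $\alpha$: it introduces Miles' Riccati-type variable $\Omega(y) = \psi / \bigl((U_s-c)[U_s'\psi - (U_s-c)\psi']\bigr)$, which satisfies $\Omega' = \alpha^2 Y\Omega^2 - Y^{-1}$ with $Y=(U_s-c)^2$; the boundary value $\lim_{y\to\infty}\Omega = 1/\bigl(\alpha(U_+-c)^2\bigr)$ encodes the decay condition exactly, and one line of algebra relating $\Omega(0)$ to $\psi'_{-,\alpha}(0)/\psi_{-,\alpha}(0)$ plus a one-step expansion of the Riccati equation delivers \eqref{psippsi} with $\Omega_0$ appearing for free. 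Your approach --- expanding $\psi_{-,\alpha}=\psi_{-,0}+\alpha\chi_1+\alpha^2\chi_2+\cdots$, solving order by order with the $\alpha=0$ fundamental pair $\phi_1=U_s-c$, $\phi_2=(U_s-c)\int(U_s-c)^{-2}$, and pushing everything to $y=0$ --- is in principle equivalent, but it hits exactly the difficulty the Riccati reduction is designed to sidestep: the coefficients $\chi_1,\chi_2$ grow like $y,y^2$ at infinity, so the variation-of-parameters integrals require careful subtractions of divergent pieces before they "collapse to the integrand of $\Omega_0$." That collapse is asserted, not shown, and it is the one genuinely nontrivial computation in the theorem; if you adopt this route you should carry it out, or at least check that the two divergent contributions (from $\phi_1^2$ and from $\phi_2$ via $\int(U_s-c)^{-2}$) recombine into the convergent bracket $(U_s-c)^2/(U_+-c)^2 - (U_+-c)^2/(U_s-c)^2$. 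Finally, for \eqref{sizesingularity} the paper computes the coefficient $\theta$ of $\psi_B$ in $\psi_{-,\alpha}=\psi_A+\theta\psi_B$ explicitly from $r=\psi'_{-,\alpha}(0)/\psi_{-,\alpha}(0)$ and the values of $\psi_A,\psi_B$ at $0$; your structural argument ("the $\phi_2$-component vanishes for $\psi_{-,0}$, is $O(1)$ for $\psi_{+,0}$") gives the vanishing at $\alpha=0$ but only produces $O(\alpha)$ for the correction --- you need the regime $\alpha\lesssim c$ (which is the relevant one, but is implicit) to conclude $Q_{-,\alpha}=O(c)$, whereas the paper's direct estimate of $\theta$ gives it outright.
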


\Remarks
The vorticity $\omega_{\pm,\alpha}$ of $\psi_{\pm,\alpha}$ is directly given by Rayleigh equation through
\beq \label{vorticityEuler}
\omega_{\pm,\alpha} = {U_s'' \over U_s - c} \psi_{\pm,\alpha}.
\eeq
In particular, it has a $(y - y_c)^{-1}$ singularity at $y_c$. It is exponentially rapidly decreasing at infinity, like $U_s''(y)$, namely like $e^{- \beta y}$, with a rate
independent on $\alpha$, in strong contrast with $\psi_{\pm,\alpha}$, whose decay rate goes to $0$ as $\alpha \to 0$:
the vorticity decays faster than the velocity, which only decays like $e^{- |\alpha| y}$. This vertical scale $\alpha^{-1}$ corresponds to a large scale
 recirculation of the fluid, through large structures of horizontal and vertical scales of order $\alpha^{-1}$.

As stated in (\ref{sizesingularity}),
the singularity of $\psi_{-,\alpha}$ at $y_c$ is of magnitude $O(c)$ only, a remark which will be used in section \ref{approximateinterior}.
On the contrary the singularity of $\psi_{+,\alpha}$ at $y_c$ is of magnitude $O(1)$.
Note that $\psi_{\pm,\alpha}$ has a logarithmic branch at $y_c$ and is thus defined on $\Gamma_{\beta_0,\beta_1}$ deprived of some
segment ending on $y_c$, namely on $\Gamma_{\beta_0,\beta_1}(y_c)$.

\begin{proof}
The proof is classical and is just a rephrasing of the computations detailed in \cite{Drazin}, \cite{Drazin2} or \cite{Reid}. 
We note that  (\ref{Rayleighwithout}) has a singular point at $y_c$. We first construct the solutions locally near $y_c$.
We  expect to have two independent 
solutions, called $\psi_A$ and $\psi_B$, one being holomorphic at $y_c$ and the other one having a "$y \log y$" branch at $y_c$.
More precisely, using classical methods from the theory of singular holomorphic ordinary differential equations,
 there exists a smooth solution $\psi_A$ of the form
$$
\psi_A(y) = (y - y_c) P_A(y - y_c),
$$
where $P_A$ is an holomorphic function in $y - y_c$, and another solution $\psi_B$ of the form
$$
\psi_B(y) = P_B(y - y_c) + {U''_c \over U_c'}  (y - y_c) P_A(y - y_c) \log(y - y_c) ,
$$
where $P_B$ is another holomorphic function in $y - y_c$. Note that both $P_A$ and $P_B$ depend in an holomorphic way on  $\alpha$ and $c$.
The existence of $P_A$ is obtained by searching it under the form
$$
P_A(y - y_c) = \sum_{n \ge 1} a_n (y - y_c)^n
$$
and inserting this expression in (\ref{Rayleighwithout}). This leads to a recurrence relation on the $\alpha_n$. It turns out that its radius of convergence
is positive. Then $P_A$ can be extended to a "pencil like" neighbourhood  $\Gamma_{\beta_0,\beta_1}$ of $\rit_+$ by analytic continuation.
Next $\psi_B$ is constructed through the method of the variation of parameters.
Direct computations \cite{Reid} show that one can choose
$$
P_A(y - y_c) = 1 + {U_c'' \over 2 U_c'} (y - y_c) + (U_c'' + \alpha^2) (y - y_c)^2 + ...
$$
and
$$
P_B(y - y_c) = 1 + \Bigl({U'''_c \over 2 U_c'} - {U_c'' \over U_c'^2} + {\alpha^2 \over 2} \Bigr) (y - y_c)  + ...,
$$
where $U_c' = U_s'(y_c)$ and similarly for $U_c''$ and $U_c'''$.
Note that the Wronskian of these two solutions is constant, and equals $-1$.

We have to carefully define the logarithm.
A natural choice is to define the logarithm except for negative values of its argument,
namely when $y - y_c$ is outside $- \rit_+$. 
However we are interested in $c$ and not $y_c$, and more precisely in $c$ with $\Im c >0$. In this
case $\Im y_c > 0$. We thus want to define $\log(y - y_c)$
when $\Im y_c > 0$. Thus we take the determination of the logarithm which is defined on $\cit - i \rit_+$.

We observe that, as $y$ goes to $+ \infty$, the only possible asymptotic behaviours of the solutions are $e^{+ \alpha y}$ and
$e^{- \alpha y}$, assuming $\alpha > 0$ to fix the ideas.
We therefore have to find a combination  $\psi_{-,\alpha}$ of
$\psi_A$ and $\psi_B$ which is equivalent to $e^{- \alpha y}$ at infinity.
For this we introduce, following J.W. Miles \cite{Miles} (see also \cite{Reid}, section $3.4$, page $277$),
\beq \label{definitionOmega}
\Omega(y) = {\psi \over (U_s - c) [ U_s' \psi - (U_s-c) \psi'] }
\eeq
and note that, after some calculations, $\Omega(y)$ satisfies the ordinary differential equation
\beq \label{equationOmega}
\Omega' = \alpha^2 Y \Omega^2 - Y^{-1}
\eeq
where $Y(y) = (U_s - c)^2$.  We note that the source term $Y^{-1}$ is singular when $y = y_c$.
By definition of $\Omega(y)$, for  $\psi = \psi_{-,\alpha}$, 
the corresponding function $\Omega(y)$ satisfies 
$$
\lim_{y \to + \infty} \Omega(y) = {1 \over \alpha ( U_+ - c)^2} .
$$
Using (\ref{equationOmega}), we expand $\Omega(y)$ in
$$
\Omega(y) = {1 \over \alpha (U_+ - c)^2} + \theta(y)
$$ 
where $\theta$ satisfies
\beq \label{equtheta}
\theta' = \Bigl[ {(U_s - c)^2 \over (U_+ - c)^4} - {1 \over (U_s - c)^2} \Bigr]
+ 2 \alpha {(U_s - c)^2 \over (U_+ - c)^2} \theta + \alpha^2 (U_s - c)^2 \theta^2 .
\eeq
Let
$$
\Omega_0(y,c) = -{1 \over ( U_+ - c)^2} \int_y^{+ \infty} \Bigl[
{(U_s(z) - c)^2 \over (U_+ - c)^2} - {(U_+ - c)^2 \over (U_s(z) - c)^2} \Bigr] dz . 
$$
Note that the integral involved in $\Omega_0(y,c)$ is well defined since $U_s(z)$ converges exponentially fast to $U_+$ at infinity.
Note also that the second term defining $\Omega_0(y,c)$  is singular at $y = y_c$.
More precisely,
\beq \label{exx}
{1 \over (U_s(z) - c)^2} = {1 \over U_c'^2 (z -y_c)^2} - {U_c'' \over U_c'^3} {1 \over z - y_c} + \cdots.
\eeq
Thus $\Omega_0(y,c)$ behaves like $(y - y_c)^{-1}$ near $y_c$. As a consequence, $(U_s - c) \Omega_0(y,c)$ is bounded.
Using (\ref{equtheta}) we then get
\beq \label{expansionOmega}
\Omega(0) = {1 \over \alpha (U_+ - c)^2} + \Omega_0(y,c) + O(\alpha).
\eeq
We now combine (\ref{expansionOmega}) with
\beq \label{Omegaat0}
\Omega(0) = -  {\psi(0) \over c [ U'_s(0) \psi(0) + c \psi'(0) ]} 
\eeq
and obtain
$$
\Omega(0)^{-1} = \alpha (U_+ - c)^2 - \alpha^2 (U_+ - c)^4 \Omega_0(0,c) + O(\alpha^3) 
$$
\beq \label{valueOmega}
=  - c U_s'(0) - c^2 {\psi_{-,\alpha}'(0) \over \psi_{-,\alpha}(0)},
\eeq
which leads to (\ref{psippsi}).

Let us go back to the question of writing $\psi_{-,\alpha}$ as a combination of $\psi_A$ and $\psi_B$. 
Let 
$$
r = {\psi_{-,\alpha}'(0) \over \psi_{-,\alpha}(0)} = O \Bigl({1 \over c} \Bigr) + O \Bigl( {\alpha \over c^2} \Bigr) .
$$ 
Let us look for $\psi_{-,\alpha}$ under the form
$$
\psi_{-,\alpha} = \psi_A + \theta \psi_B 
$$
for some $\theta$. We have
$$
\theta = { r \psi_A(0) - \psi_A'(0) \over \psi_B'(0) - r \psi_B(0)} .
$$
Then, using the explicit expressions of $\psi_A$ and $\psi_B$, we obtain $\psi_A(0) = O(c)$, $\psi_A'(0) = 1 + O(c)$,
$\psi_B(0) = 1 + O(c \log c)$ and $\psi_B'(0) = O(\log c)$.
Thus, 
$$
r \psi_A(0) - \psi_A'(0) =  O(r c)  - 1 + O(c)  = O(1) + O \Bigl( {\alpha \over c} \Bigr),
$$
and 
$$
\psi_B'(0) - r \psi_B(0) = O(\log c) - O(r) = O \Bigl({1 \over c} \Bigr) + O \Bigl( { \alpha \over c^2 } \Bigr).
$$ 
Thus,
\beq \label{sizetheta}
\theta = O(c).
\eeq
Later we will need the asymptotic behaviour of $\Omega_0(0)$ when $\Im c$ is small. Using (\ref{exx})
we note that, as $\Im c \to 0$, 
\beq \label{imomega0}
\Im \Omega_0(0,c) \to - i \pi {U_c'' \over U_c'^3} (U_+ -c)^2 
\eeq
and
\beq \label{imomega1}
\Re \Omega_0(0,c) \sim {(U_+ - c)^2 \over U_c'^2 y_c} + O(\log y_c),
\eeq
 which ends the proof.
\end{proof}


\subsection{Green function for Rayleigh's equation}

.

We now define the Green function $G_{\alpha,c}(x,y)$ to be the solution of
\beq \label{defiG0c}
Ray_{\alpha,c}(G_{\alpha,c}(x,y)) = \delta_x
\eeq
which satisfies $G_{\alpha,c}(x,0) = 0$ and $G_{\alpha,c}(x,y) \to 0$ as $y \to + \infty$. 
The solution of Rayleigh's equation
\beq \label{Ray0cf}
 (U_s - c)  (\partial_y^2 - \alpha^2 ) \psi - U_s''  \psi  = f,
 \eeq
 together with its two boundary conditions, at $0$ and $+ \infty$, is then explicitly given by
 \beq \label{Ray0cf1}
 \psi(y) = \int_0^{+\infty} G_{\alpha,c}(x,y) f(x) dx .
 \eeq
 We first solve (\ref{defiG0c}) without the boundary conditions, just choosing a particular solution.
 This leads to the introduction of
 \beq \label{GRay}
  G_{\alpha,c}^{int}(x,y) = {1 \over U_s(x) - c} 
 \Bigl\{ \begin{array}{c}
 \psi_{-,\alpha,c}(y) \psi_{+,\alpha,c}(x) \qquad \hbox{if} \qquad y > x \cr
 \psi_{-,\alpha,c}(x) \psi_{+,\alpha,c}(y) \qquad \hbox{if} \qquad y < x , \cr
\end{array} 
 \eeq
 where we used that the Wronskian between the two solutions equals $1$.
 We then correct $G_{\alpha,c}^{int}$ by $G_{\alpha,c}^b$ such that 
 \beq \label{GRay2}
 G_{\alpha,c} = G_{\alpha,c}^{int} + G_{\alpha,c}^b
 \eeq
 satisfies both Rayleigh's equation and the two boundary conditions. This leads to 
 \beq \label{GRay3}
 G_{\alpha,c}^b(x,y) = - {\psi_{-,\alpha,c}(x) \psi_{+,\alpha,c}(0)  \over U_s(x) -c}  {\psi_{-,\alpha,c}(y) \over \psi_{-,\alpha,c}(0)}.
 \eeq


\subsection{Proof of Theorem \ref{growthRayleigh}}


 We now focus on the inverse Laplace transform of the Green function, namely on Dunford's formula
 \beq \label{mainformula}
 G_\alpha(t,x,y) = - {\alpha \over 2  \pi} \int_\Gamma e^{- i \alpha c t} G_{\alpha,c}(x,y) \; dc 
 \eeq
 where $\Gamma$ is a contour  "above"  $\sigma(U_s)$.
 We can shift the contour $\Gamma$ very close to the range of $U_s$ and for instance choose
 $$
\Gamma = \Gamma_- \cup \Gamma_c \cup \Gamma_+
$$
where $\Gamma_c =  [\min(U_s) - \gamma , \max(U_s) + \gamma] + i \eps$, with $\eps > 0$ and $\gamma > 0$,
and where $\Gamma_-$ and $\Gamma_+$ are half lines going to infinity like
$\Gamma_- =  (\min(U_s) - \gamma) + i \eps + (1 + i) \rit_-$ and similarly for
$\Gamma_+ = ( \max(U_s) + \gamma) + i \eps + (-1 + i) \rit_+$.

We will now shift the contour $\Gamma$ downwards, namely from positive $\eps$ to negative $\eps$. Meanwhile we will
encounter poles and branches of the Green function $G_{\alpha,c}$.
We recall that $G_{\alpha,c}$ is meromorphic outside $U_s(x) -  i \rit_+$ and $U_s(y) - i \rit_+$, with a pole at $c = 0$ for $G_{\alpha,c}^b$.
We decompose $G(t,x,y)$ in $G_{\alpha,c}^{int}(t,x,y)$ and $G_{\alpha,c}^b(t,x,y)$.
Let us first focus on $G_{\alpha,c}^{int}$.

We only detail the case $y > x$, the case $y < x$ being similar. We have
$$
G_{\alpha,c}^{int}(t,x,y) = - {\alpha \over 2  \pi} \int_{\Gamma} e^{- i \alpha c t } 
{\psi_{-,\alpha,c}(y) \psi_{+,\alpha,c}(x) \over U_s(x) - c} \; dc  = G_1 + G_2 + G_3 + G_4
$$
where
$$
G_1 =  {1 \over 2 i \pi} \int_{\Gamma} e^{- i \alpha c t }  { P_{-,\alpha,c}(y - y_c) P_{+,\alpha,c}(x - y_c) \over U_s(x) - c} \; dc,
$$
$$
G_2 =  {1 \over 2 i \pi} \int_{\Gamma} e^{- i \alpha c t }  
{Q_{-,\alpha,c} (y - y_c) \log( y - y_c) P_{+,\alpha,c} \over U_s(x) - c} \; dc ,
$$
$$
G_3 =  {1 \over 2 i \pi} \int_{\Gamma} e^{- i \alpha c t }  
{P_{-,\alpha,c} (x - y_c) \log( x - y_c) Q_{+,\alpha,c} \over U_s(x) - c} \; dc . 
$$
and
$$
G_4 =  {1 \over 2 i \pi} \int_{\Gamma} e^{- i \alpha c t }  
{  (y - y_c) Q_{-,\alpha,c} \log( y - y_c)  (x - y_c) \log( x - y_c) Q_{+,\alpha,c} \over U_s(x) - c} \; dc . 
$$
We now move $\Gamma$ downwards to negative $\eps$ in $G_1$. We have to cross a singularity at $c = U_s(x)$, hence
$$
G_1 = e^{-i \alpha U_s(x) t} P_{-,\alpha,U_s(x)}(y-x) P_{+,\alpha,U_s(x)}(0) + O(e^{-\eps t}). 
$$
Let us turn to $G_3$. We note that $(x - y_c) / (U_s(x) - c)$ is smooth, so
$G_3$ only involves a logarithmic branch and may be rewritten
$$
G_3 =  {1 \over 2 i \pi} \int_{\Gamma} e^{- i \alpha c t } R_c(x - y_c)  \log( x - y_c) \; dc  
$$
where $R_c$ is analytic. We move $\Gamma$ downwards, from $+ i \eps$ to $- i \eps$. Doing this
we are left with a branch where the logarithm is not defined.  Turning around this logarithmic branch, we see that its contribution leads to
$$
G_3 = O( \langle \alpha t \rangle^{-1}).
$$
The study of $G_2$, $G_4$ and $G_{\alpha,c}^b$ is similar, which ends the proof of Theorem \ref{growthRayleigh}.


\subsection{Inversion of Rayleigh operator  \label{further}}


Let us first introduce a norm on the holomorphic functions defined in the vicinity of $0$.
Let $\rho > 0$.
If 
$$
\phi(Y) = \sum_{n \ge 0} a_n Y^n
$$
then we define its norm $\| \phi \|_\rho$ by
$$
\| \phi \|_\rho = \sum_{n \ge 0} | a_n | \rho^n  .
$$
We have the following result.

\begin{proposition} \label{solutionRayleigh}
Let $P_1$ and $Q_1$ be two holomorphic functions in a neighbourhood of $y_c$.  Then there exists a solution $\phi$ of
\beq \label{ee1}
Ray_{\alpha, c}(\phi) = P_1(y - y_c) + (y - y_c) \log(y - y_c) Q_1(y - y_c)
\eeq
which is of the form 
\beq \label{ee2}
\phi = P(y - y_c) + (y - y_c) \log(y - y_c) Q(y - y_c) 
\eeq
in a neighborhood of $y_c$. Moreover, provided $\rho$ is small enough (independently of $P$ and $Q$),
\beq \label{ee3}
\| P \|_\rho + \| Q \|_\rho \lesssim \| P_1 \|_\rho + \| Q_1 \|_\rho .
\eeq
\end{proposition}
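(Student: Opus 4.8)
Put $Y=y-y_c$ and look for $\phi$ in the form $\phi=P(Y)+Y\log Y\,Q(Y)$ with $P,Q$ holomorphic near $0$, matching the ``holomorphic'' and the ``$Y\log Y$'' parts of (\ref{ee1}) separately. Write $U_s-c=U_c'\,Y\,g(Y)$, where $U_c'=U_s'(y_c)\ne0$ and $g$ is holomorphic with $g(0)=1$, and recall $V=U_s''+\eps\alpha^4$. Using
\[
(\partial_y^2-\alpha^2)(Y\log Y\,Q)=\frac{Q}{Y}+2Q'+2\log Y\,Q'+Y\log Y\,(Q''-\alpha^2 Q),
\]
one sees that after multiplication by $U_s-c$ every term is either holomorphic in $Y$, or is $Y\log Y$ times a holomorphic function (the factor $Y$ absorbs the pole $Q/Y$ and converts the bare $\log Y$ into $Y\log Y$); likewise $-V\phi$ splits this way. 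Since the representation $f(Y)+Y\log Y\,g(Y)$ with $f,g$ holomorphic is unique, (\ref{ee1}) is equivalent to the pair of equations
\[
Ray_{\alpha,c}(YQ)=Y\,Q_1(Y),\qquad Ray_{\alpha,c}(P)=P_1(Y)-U_c'\,g(Y)\,Q(Y)-2\,U_c'\,Y\,g(Y)\,Q'(Y).
\]

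The plan is then: first solve the inner equation $Ray_{\alpha,c}(W)=Y\,Q_1(Y)$ for a holomorphic $W$ and set $Q=W/Y$; then, with $Q$ known, the right-hand side $\tilde P_1$ of the outer equation is a holomorphic function, and one solves $Ray_{\alpha,c}(P)=\tilde P_1$ for a holomorphic $P$. Both steps amount to inverting $Ray_{\alpha,c}$ on holomorphic data, which I would do by the power-series recursion already used in the proof of Theorem \ref{Rayexp}: writing $Z=\sum_{n\ge0}z_nY^n$, $U_s-c=\sum_{k\ge1}\mu_kY^k$ with $\mu_1=U_c'$, $V=\sum_{k\ge0}\nu_kY^k$ with $\nu_0=V_c:=U_c''+\eps\alpha^4$, and inserting into $Ray_{\alpha,c}(Z)=(U_s-c)(\partial_y^2-\alpha^2)Z-VZ=h$, the coefficient of $Y^n$ is a relation among $z_0,\dots,z_{n+1}$ in which $z_{n+1}$ carries the coefficient $U_c'\,n(n+1)$. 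Hence, using $V_c\ne0$ (which holds since $U_s$ is concave, so $U_s''(y_c)\ne0$, and $\eps\alpha^4\to0$ in the regime considered), the $Y^0$-relation forces $z_0=-h(0)/V_c$ — in particular $z_0=0$ when $h(0)=0$, which is the case for $h=Y\,Q_1$, so that $Q=W/Y$ is indeed holomorphic — the coefficient $z_1$ remains free (it parametrizes the addition of the regular homogeneous solution $\psi_A$ of Theorem \ref{Rayexp}, and may be set to $0$), and for $n\ge1$ the $Y^n$-relation determines $z_{n+1}$ recursively.

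It remains to show that this formal solution converges for $\rho$ small, with $\|Z\|_\rho\lesssim\|h\|_\rho$; this is the quantitative heart of the statement and is exactly the content of the iteration procedure of the Appendix. Dividing the $Y^n$-relation by $U_c'\,n(n+1)$, the source term contributes $\lesssim\rho\,\|h\|_\rho$ to $\|Z\|_\rho$, while every other term is a product $\mu_j\,z_m$ or $\nu_j\,z_m$ ($m\le n$) divided by $U_c'\,n(n+1)$; the crucial point is that the indices $j$ occurring here satisfy $j\ge2$, so that $|\mu_j|,|\nu_j|\le Mr^{-j}$ with $M,r>0$ fixed — uniformly in the parameters, since $y_c$ stays in a fixed compact subset of the domain of analyticity of $U_s$ as $c\to0$ and $|U_c'|$, $|V_c|$ stay bounded below — and a standard geometric summation then bounds the total contribution of these terms by $C(\rho)\|Z\|_\rho$ with $C(\rho)\to0$ as $\rho\to0$. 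Absorbing this term yields $\|Z\|_\rho\lesssim\|h\|_\rho$ for $\rho$ small enough, independently of $h$. Applying this to the inner equation gives $\|W\|_\rho\lesssim\|Q_1\|_\rho$, hence $\|Q\|_\rho\lesssim\|Q_1\|_\rho$ and $\|\tilde P_1\|_\rho\lesssim\|P_1\|_\rho+\|Q_1\|_\rho$ (the derivative $Q'$ in $\tilde P_1$ costing only a factor $n$ per coefficient, harmless once compensated by the extra power of $\rho$ gained from solving the inner equation, or alternatively handled by running the two recursions jointly); applying it to the outer equation gives $\|P\|_\rho\lesssim\|\tilde P_1\|_\rho$, which together yield (\ref{ee3}).

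The main obstacle is precisely this last estimate: it hinges on the fact that the Taylor coefficients of $U_s-c$ at $y_c$ which multiply $z_m$ with $m$ close to $n$ carry index $\ge2$, so that their geometric smallness lets the perturbative terms be dominated by the leading $U_c'\,n(n+1)$ term once $\rho$ is small, and on making the constants $M$, $r$, $|U_c'|^{-1}$, $|V_c|^{-1}$ uniform as the critical layer $y_c$ moves (i.e. as $c\to0$) and as $\eps\alpha^4\to0$. By contrast, the purely structural claim — that a genuine solution of the form $P+Y\log Y\,Q$ exists, with no $(\log Y)^2$ or bare $\log Y$ terms — rests only on the observation that the source $Y\,Q_1$ of the inner equation vanishes at $y_c$, so that the lower Frobenius exponent $0$ of $Ray_{\alpha,c}$ at $y_c$ does not force a logarithm there.
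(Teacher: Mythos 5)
Your proposal is correct and follows essentially the same route as the paper: separate the $\log$ and non-$\log$ parts, reduce to two power-series recursions at $y_c$ (your inner equation $Ray(YQ)=YQ_1$ is exactly the paper's equation for $Q$ multiplied through by $Y$, and your outer equation is the paper's equation for $P$), and close by a geometric-smallness argument in $\|\cdot\|_\rho$. The only slight imprecision is the claim that the perturbative coefficients all have index $j\ge2$ — the $V$-term contributes $\nu_0 z_n/(U_c'\,n(n+1))$ with $j=0$ — but that term is harmless thanks to the $1/(n(n+1))$ factor, so the argument stands.
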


\Remark
Of course the solution is not unique since we may add any linear combination of $\psi_{\pm,\alpha,c}$.

\begin{proof}
We insert (\ref{ee2}) in (\ref{ee1}) and identify the logarithmic terms, which gives
\beq \label{eqP}
(U_s -  c) Q'' + 2 {U_s -  c \over y - y_c} Q' =  \Bigl[ U_s'' + (U_s -  c) \alpha^2 \Bigr] Q + Q_1 
\eeq
and
\beq \label{eqQ}
(U_s -  c) ( P'' - \alpha^2 P + 2 Q') + {U_s -  c  \over  y - y_c} Q = U_s'' P + P_1.
\eeq
Let us locally solve (\ref{eqP}) and (\ref{eqQ}) by looking for solutions under the form 
$$
Q(y) = \sum_n a_n (y - y_c)^n, \qquad P(y) = \sum_n b_n (y - y_c)^n .
$$
Let us expand $Q_1$ and $P_1$ using similar series, with $a_n^1$ and $b_n^1$ instead of $a_n$ and $b_n$.
 Let $Y = y - y_c$. We first rewrite (\ref{eqP}), under the form
 $$
 Y Q'' + 2 Q' = \phi_1 Q + \phi_2 Q_1
 $$
 where 
 $$
 \phi_1 = {Y \over U_s - c}  \Bigl[ U_s'' + (U_s -  c) \alpha^2 \Bigr], \qquad \phi_2 = {Y \over U_s - c} 
 $$ 
 are holomorphic functions of $Y$. Let $\sum_n c_n^1 Y^n$ and $\sum_n c_n^2 Y^n$ be their expansions near $Y = 0$.
 There is no uniqueness of the solution $Q$, so we may impose for instance $Q(y_c) = 0$, namely $a_0 = 0$.
 This leads to $a_1 =  c_0^2 a_0^1/2$, and to the induction relation
 \beq \label{relation}
\Bigl[ (n+1) n + 2n+2 \Bigr] a_{n+1} = \sum_{j = 0}^n c_{n-j}^1  a_{j} + \sum_{j=0}^n c_{n-j}^2 a_j^1
 \eeq
for $n \ge 1$.
Let $\rho$ such that $\| \phi_1 \|_{\rho} < + \infty$, $\| \phi_2 \|_{\rho} < + \infty$ and $\| Q_1 \|_{\rho} < + \infty$.
Using (\ref{relation}), we obtain for $n \ge 1$,
$$
 | a_{n+1} | \rho^{n+1} \le {\rho \over n^2}  \Bigl( \sum_{j = 0}^n | c_{n-j}^1 |  | a_{j} | \rho^n + \sum_{j=0}^n | c_{n-j}^2 |  | a_j^1 | \rho^n \Bigr),
 $$
 thus
$$
\sum_{j=0}^{n+1} | a_j | \rho^j \le  {1 \over 2} | c_0^2 a_0^1 | \rho 
+ \rho   \Bigl( \sum_{j +k = 0}^n | c_{j}^1 |  | a_k | \rho^{j+k} + \sum_{j +k =0}^n | c_j^2 |  | a_k^1 | \rho^{j+k} \Bigr)
$$
$$
 \le  {1 \over 2} | c_0^2 a_0^1 | \rho 
+ \rho   \Bigl( \sum_{j =0}^n | c_j^1 | \rho^j \sum_{j=0}^n | a_j | \rho^j
+ \sum_{j= 0}^n | c_{j}^2 | \rho^j  \sum_{j=0}^n  | a_j^1 | \rho^{j}  \Bigr).
$$
If $\rho \| \phi_1 \|_{\rho} < 1/2$ then
$$
\sum_{j=0}^{n} | a_j | \rho^j \le    | c_0^2 a_0^1 | \rho  +  2 \rho  \sum_{j= 0}^n | c_{j}^2 | \rho^j  \sum_{j=0}^n  | a_j^1 | \rho^{j}   
\lesssim \| Q_1 \|_\rho,
$$
and thus
$$
\| Q \|_\rho \lesssim \| Q_1 \|_\rho.
$$
Equation (\ref{eqQ}) can be treated similarly.
\end{proof}

We now want to solve Rayleigh equation globally. For this we introduce $Y_{\rho,\sigma}$ to be the space of functions $f$ 
defined on the pencil domain $\Gamma_{\beta_0,\beta_1}(y_c)$, such that
$$
\| f(x) e^{\sigma x} \|_{L^\infty(\Gamma_{\beta_0,\beta_1}(y_c))} < + \infty
$$
and such that, near $y_c$, $f$ is of the form $f = P + (y - y_c) \log(y - y_c) Q$, with $\| P \|_\rho + \| Q \|_\rho < + \infty$.
We define $\| f \|_{Y_{\rho,\sigma}}$ as the sum of these three norms
$$
\| f \|_{Y_{\rho,\sigma}} = \| f \|_{X_\sigma} + \| P \|_\rho + \| Q \|_\rho.
$$
We now prove the following Proposition

\begin{proposition} \label{propXrho}
Let $\sigma > \alpha$.
If $\rho$ is small enough, then for any $f \in Y_{\rho,\sigma}$, there exists $\phi \in Y_{\rho,\alpha}$ such that
$$
Ray_{\alpha,c} \phi = f .
$$
Moreover,
\beq \label{dee1}
\| \phi \|_{Y_{\rho,\alpha}} \lesssim \| f \|_{Y_{\rho,\sigma}} 
\eeq
and, provided $\sigma < \beta$,
$$
\| (U_s - c) (\partial_y^2 \phi - \alpha^2 \phi ) \|_{Y_{\rho,\sigma}}  \lesssim \| f \|_{Y_{\rho,\sigma}}.
$$
\end{proposition}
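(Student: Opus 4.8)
The plan is to solve $Ray_{\alpha,c}\phi = f$ by combining the local construction near $y_c$ provided by Proposition \ref{solutionRayleigh} with the explicit global Green function $G_{\alpha,c}$ built from $\psi_{\pm,\alpha,c}$ in (\ref{GRay})--(\ref{GRay3}). Concretely, I would first invoke Proposition \ref{solutionRayleigh} to obtain a particular solution $\phi_{loc} = P + (y-y_c)\log(y-y_c)Q$ on a fixed disk $|y - y_c| \le 2\rho$, with $\|P\|_\rho + \|Q\|_\rho \lesssim \|P_1\|_\rho + \|Q_1\|_\rho \le \|f\|_{Y_{\rho,\sigma}}$; this already matches the required structure and bound near the critical layer. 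Away from $y_c$, the source $f$ lies in $X_\sigma$ with $\sigma > \alpha$, so I would set $\phi_{glob}(y) = \int_0^{+\infty} G_{\alpha,c}(x,y)\, f(x)\,dx$ using (\ref{Ray0cf1}); the point of the hypothesis $\sigma > \alpha$ is precisely that $f$ decays faster than the $e^{\alpha x}$ growth of $\psi_{+,\alpha,c}$, so the integral converges and the resulting $\phi_{glob}$ inherits the $e^{-\alpha y}$-type decay of $\psi_{-,\alpha,c}$, giving $\|\phi_{glob}\|_{X_\alpha} \lesssim \|f\|_{X_\sigma}$. One then patches $\phi_{loc}$ and $\phi_{glob}$ together: their difference on the annulus $\rho \le |y-y_c| \le 2\rho$ is a solution of the homogeneous Rayleigh equation, hence a linear combination of $\psi_{\pm,\alpha,c}$, which one uses to adjust $\phi_{glob}$ so that the two representations agree, without destroying either the $X_\alpha$ bound or the $\|P\|_\rho + \|Q\|_\rho$ bound — here one uses that $\psi_{\pm,\alpha,c}$ themselves have exactly the form $P + (y-y_c)\log(y-y_c)Q$ with controlled $\rho$-norms by Theorem \ref{Rayexp}. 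Adding up the three contributions to $\| \phi \|_{Y_{\rho,\alpha}}$ yields (\ref{dee1}).

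For the final estimate, the idea is that one does \emph{not} bound $\partial_y^2 \phi$ directly — it need not decay like $e^{-\sigma y}$ since $\phi$ only decays like $e^{-\alpha y}$ — but rather the combination $(U_s - c)(\partial_y^2\phi - \alpha^2\phi)$. From the equation itself,
\[
(U_s - c)(\partial_y^2\phi - \alpha^2\phi) = U_s''\,\phi + f,
\]
so this quantity equals $U_s''\phi + f$. Now $f \in Y_{\rho,\sigma}$ by hypothesis, and $U_s''\phi \in X_\sigma$ because $|U_s''(y)| \le C_\beta e^{-\beta \Re y}$ with $\sigma < \beta$ while $\phi$ is at worst $O(e^{-\alpha y})$ (indeed bounded), so the product decays at rate $\beta > \sigma$. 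Near $y_c$ one checks that $U_s''\phi$ again has the form $\tilde P + (y-y_c)\log(y-y_c)\tilde Q$ with $\|\tilde P\|_\rho + \|\tilde Q\|_\rho \lesssim \|P\|_\rho + \|Q\|_\rho$, since multiplication by the holomorphic function $U_s''$ is bounded on the $\|\cdot\|_\rho$ algebra for $\rho$ small. Hence $\|(U_s-c)(\partial_y^2\phi - \alpha^2\phi)\|_{Y_{\rho,\sigma}} \lesssim \|\phi\|_{Y_{\rho,\alpha}} + \|f\|_{Y_{\rho,\sigma}} \lesssim \|f\|_{Y_{\rho,\sigma}}$ by (\ref{dee1}).

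The main obstacle I anticipate is the bookkeeping at the interface between the local ($\rho$-norm) description and the global ($X_\sigma$) description: one must verify that the homogeneous correction used to glue the two pieces does not blow up, which requires quantitative control of $\psi_{\pm,\alpha,c}(0)$, $\psi_{-,\alpha,c}'(0)/\psi_{-,\alpha,c}(0)$, and the behaviour of $\psi_{\pm,\alpha,c}$ on the matching annulus — all uniform in small $\alpha, c$. Theorem \ref{Rayexp} supplies exactly these ingredients (in particular (\ref{psippsi}) and (\ref{sizesingularity})), but one has to be careful that the potential smallness of $\psi_{-,\alpha,c}(0)$, of order $c$, does not produce large constants; this is why the boundary-corrected Green function $G^b_{\alpha,c}$ in (\ref{GRay3}) is written with the ratio $\psi_{-,\alpha,c}(y)/\psi_{-,\alpha,c}(0)$, which stays bounded. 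A secondary technical point is ensuring the $\rho$ in the statement can be chosen uniformly, i.e. smaller than both the convergence radius in Proposition \ref{solutionRayleigh} and the radius on which $U_s$, $U_s''$ are holomorphic near $y_c$; since $y_c$ ranges over a compact subset of $\Gamma_{\beta_0,\beta_1}$ as $c$ ranges over the relevant small set, such a uniform $\rho$ exists.
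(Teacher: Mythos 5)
Your proposal follows essentially the same route as the paper's proof: a local solution near $y_c$ built from Proposition \ref{solutionRayleigh}, a global extension via the explicit Rayleigh Green function (the paper uses $G^{int}_{\alpha,c}$ and adjusts with a combination $A\psi_{+,\alpha,c}+B\psi_{-,\alpha,c}$ for $C^1$ matching, while you use $G_{\alpha,c}$ and patch in the same way), and the weighted vorticity estimate read off directly from $(U_s-c)(\partial_y^2\phi-\alpha^2\phi)=U_s''\phi+f$ together with $|U_s''|\lesssim e^{-\beta y}$ and $\sigma<\beta$. The argument is correct and matches the paper's.
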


\Remarks
Note that, as previously, the vorticity $(\partial_y^2 - \alpha^2) \phi$ decays faster than the stream function $\phi$ which decays slowly,
like $e^{- | \alpha | y}$.

\begin{proof}
For $y$ near $y_c$, we use the previous Proposition, which gives some function $\phi_0$ for $| y - y_c | \le \rho$.
For $\Re y > \Re y_c + \rho/2$, we use the Green function $G^{int}_{\alpha,c}$ to extend $\phi_0$
through
$$
\phi_0(y) = \int_{\rho/2}^{+\infty} G^{int}_{\alpha,c}(x,y) f(x) \, dx + A \psi_{+,\alpha,c}(y) + B \psi_{-,\alpha,c}(y),
$$
where the constants $A$ and $B$ are adjusted such that $\phi_0$ is $C^1$ at $\Re y_c + \sigma /2$.
Note that this  formula  can be extended to $\Gamma_{\beta_0,\beta_1}$.
Now at $\phi_0$ behaves like $A e^{+ | \alpha | y}$ at infinity, the desired solution  is $\phi = \phi_0 - A e^{+ | \alpha | y}$.

Using the explicit expression of $G^{int}_{\alpha,c}$ we get (\ref{dee1}).
Next the vorticity $\partial_y^2 \phi - \alpha^2 \phi$ satisfies
$$
(U_s - c) (\partial_y^2 \phi - \alpha^2 \phi ) = U_s'' \phi + f
$$
As $|U_s''(y)| \lesssim e^{-\beta y}$, the right hand side decays like $e^{-\sigma y}$,
which ends the proof.
\end{proof}


\subsection{Further results \label{adjoint}}


The adjoint of Rayleigh's operator is given by 
\beq \label{adjointRay}
Ray_{\alpha,c}^t(\psi) = (\partial_y^2 - \alpha^2) (U_s - \bar c) - U_s'' \psi,
\eeq
and is conjugated with Rayleigh operator through
\beq \label{conjugation}
Ray_{\alpha,c}^t(\psi) = { Ray_{\alpha,\bar c}\Bigl( (U_s - \bar c) \psi \Bigr) \over U_s - \bar c}.
\eeq
Thus, two independent solutions $\psi^t_{\pm,\alpha,c}$ are obtained starting from  
$\psi_{\pm,\alpha,c}$  thanks to the formula
\beq \label{psitmodes1}
\psi^t_{\pm,\alpha,c} = {\psi_{\pm,\alpha,\bar c} \over U_s(y) - \bar c }.
\eeq
In particular, $\psi^t_{+,\alpha,c}$ is more singular than $\psi_{+,\alpha,c}$ at $y_c$, with a  $(U_s(y) - \bar c)^{-1}$ singularity.
On the contrary, $\psi^t_{-,\alpha,c}$ is close to $1$ for small $\alpha$.

Moreover, when $c$ is away from the range of $U_s$,  Rayleigh equation may be rewritten as
\beq \label{RRay}
\partial_y^2 \psi - \Bigl[ \alpha^2 + {U_s'' \over U_s - c} \Bigr] \psi = 0,
\eeq
where the quantity between brackets is bounded. It is then easy to construct two solutions $\psi_{\pm,\alpha,c}$. 
These solutions behave like $e^{\pm \alpha y}$ at infinity,
and all their norms remain bounded as long as $|c|$ remains bounded and away from the range of $U_s$.

As $|c|$ goes to $+\infty$, (\ref{RRay}) is a perturbation of $\partial_y^2 \psi - \alpha^2 \psi = 0$, and it is possible to write a full expansion 
of $\psi_{\pm,\alpha,c}$ in terms of $c^{-1}$. In particular, $\psi_{-,\alpha,c}$ is bounded as $|c|$ goes to infinity. We do not detail these points any further.


\section{Airy and construction of "fast" solutions}


In this section we first recall the solutions of linearized Airy equation and the classical Langer's transformation. 
We then construct independent solutions and  Green functions for $Airy$ and  $Airy \, \circ \, \partial_y^2$.
This allows us to construct solutions of Orr Sommerfeld equations with  rapidly decaying sources,
as well as two independent fast solutions $\phi_{f,\pm}$ of this equation without forcing term.


\subsection{Classical Airy equation}


We recall that the solutions of the classical Airy equation
$$
  \partial_y^2 \psi = y \psi
 $$
 are linear combination of the classical Airy functions $Ai$ and $Bi$, whose properties are gathered in the Appendix \ref{appendix1}.

We now turn to the study of the linearized Airy equation
\beq \label{linearAiry}
 \eps \partial_y^2 \psi = U_s'(y_c) (y - y_c) \psi.
\eeq
Note that $\eps$ is purely imaginary, and that $y_c$ is a complex number with positive real part.
Let $\gamma$ be defined by
$$
\eps \gamma^3 = U_s'(y_c) ,
$$
namely by
\beq \label{defigamma}
\gamma = \Bigl(  {i \alpha U_s'(y_c) \over \nu} \Bigr)^{1/3} .
\eeq
We note that $\gamma$ is a complex number and that its argument equals
$$
\arg(\gamma) = {\pi \over 6} + {\arg(U_s'(y_c)) \over 3}.
$$
As $|\arg(y_c) | \le \pi/2$, we have
$$ 
0 < \arg(\gamma) < {\pi \over 3} .
$$
Now, $Ai(\gamma (y - y_c))$ and $Ci(\gamma (y - y_c))$, defined in (\ref{defiCi}), 
are two independent solutions of (\ref{linearAiry}) with Wronskian of order $\gamma$.
  A short computation shows that the first solution
goes to $0$ as $y \to + \infty$, that the second one diverges as $y \to + \infty$, and conversely when $y \to - \infty$.

Let us study how the solutions of (\ref{linearAiry}) scale with $\eps$.
Let us consider
\beq \label{la}
\eps \partial_y^2 \phi - U_s'(y_c) (y - y_c) \phi = \delta_x.
\eeq
The Green function $G^{Airy}(x,y)$ of (\ref{la}) is explicitly given by
\beq \label{GAiry}
G^{Airy}(x,y) = {1 \over \eps \gamma W^{Airy}}  \left\{ \begin{aligned} 
Ci(\gamma (y - y_c) ) Ai(\gamma (x - y_c) ) \quad \hbox{if} \quad y < x,
\\
 Ci(\gamma (x - y_c) ) Ai(\gamma (y - y_c) ) \quad \hbox{if} \quad y > x,
 \end{aligned}\right.
\eeq
where $W^{Airy}$ is the Wronskian between $Ai$ and $Ci$, which is of order $1$.

Let us detail some properties of $G^{Airy}(x,y)$. First, $G^{Airy}(x,y)$ is symmetric in $x$ and $y$.
Next, using (\ref{asymptAi}) and (\ref{asymptAi1}), provided $\gamma (x - y_c)$ is large, we have
$$
\int_x^{+ \infty} | G^{Airy}(x,y) | \, dy \lesssim {1 \over  |x - y_c|}.
$$
 If $\gamma (x - y_c)$ is small, then the same integral is directly bounded by $\eps^{-1} \gamma^{-2}$ since $Ai \in L^1$.
Thus,
\beq \label{studyG}
 \int_x^{+\infty} | G^{Airy}(x,y) | \, dy \lesssim  \min \Bigl(  |x - y_c|^{-1}, \gamma \Bigr).
\eeq
The computations are similar for the integral between $- \infty$ and $0$.

We note the behavior of this integral in $(x - y_c)^{-1}$, when $x - y_c$ is large, which is coherent with the fact that, 
when $\eps = 0$, Airy operator degenerates into $-(y - y_c) \phi$.
When $x -y_c$ is small, the singularity of $(x - y_c)^{-1}$ is smoothed out, and saturates at $\gamma$.


\subsection{Langer's transformation}


To go from linearized Airy equation (\ref{linearAiry}) to $Airy$, we  need to use the classical Langer's transformation \cite{Drazin} that we recall now.

\begin{lemma} Langer's transformation.\\
Let $B(y)$ and $C(y)$ be two functions. Let $f$ and $g$ such that
\beq \label{Langer1}
B \Bigl( g(y) \Bigr)  (g')^2 = C(y)
\eeq
and
\beq \label{Langer2}
2 f' g' + f g'' = 0 .
\eeq
If $\psi$ is a solution to
\beq \label{Langer3}
- \eps \psi'' + B(y) \psi = 0,
\eeq
then
$$
\phi(y) = f(y) \psi \Bigl( g(y) \Bigr)
$$
is a solution to
\beq \label{Langer4}
- \eps \phi'' + C(y) \phi =- \eps {f'' \over f} \phi . 
\eeq
\end{lemma}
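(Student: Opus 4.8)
The plan is to verify the claim by direct substitution, computing $\phi'' $ from $\phi(y) = f(y)\psi(g(y))$ via the chain rule and then using the three hypotheses \eqref{Langer1}, \eqref{Langer2}, \eqref{Langer3} to collapse the result into \eqref{Langer4}. First I would differentiate twice:
\[
\phi' = f' \psi(g) + f g' \psi'(g), \qquad
\phi'' = f'' \psi(g) + \bigl( 2 f' g' + f g'' \bigr)\psi'(g) + f (g')^2 \psi''(g).
\]
The middle term vanishes identically by \eqref{Langer2}, so $\phi'' = f'' \psi(g) + f (g')^2 \psi''(g)$. This is the key algebraic simplification and the whole point of imposing condition \eqref{Langer2}: it is precisely the relation that kills the first-derivative term $\psi'(g)$, which has no counterpart in the target equation \eqref{Langer4}.

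Next I would eliminate $\psi''(g)$ using \eqref{Langer3}, which gives $\psi''(g) = \eps^{-1} B(g(y))\,\psi(g(y))$. Substituting,
\[
-\eps \phi'' = -\eps f'' \psi(g) - f (g')^2 B\bigl(g(y)\bigr)\psi(g).
\]
By \eqref{Langer1}, $B(g(y))(g')^2 = C(y)$, so the second term is $- C(y) f(y)\psi(g(y)) = -C(y)\phi(y)$. Rearranging yields
\[
-\eps \phi'' + C(y)\phi = -\eps f'' \psi(g(y)) = -\eps \frac{f''}{f}\,\phi,
\]
using $\psi(g(y)) = \phi(y)/f(y)$ in the last step (valid wherever $f$ does not vanish, which holds on the relevant domain since \eqref{Langer2} forces $f = c\,(g')^{-1/2}$ for a constant $c$, and $g'$ is nonvanishing in the region of interest). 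This is exactly \eqref{Langer4}.

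There is no real obstacle here: the lemma is a formal change-of-variables identity, and every step is a one-line computation once the two auxiliary conditions are in place. The only minor point worth a remark is the solvability of \eqref{Langer1}–\eqref{Langer2} themselves — given $B$ and $C$ one solves the first-order ODE $(g')^2 = C(y)/B(g(y))$ for $g$ (locally, by separation of variables, away from zeros of $B\circ g$), and then \eqref{Langer2} integrates explicitly to $f = c\,(g')^{-1/2}$ — but since the statement takes $f$, $g$, $B$, $C$ as given data satisfying these relations, this need not be addressed in the proof of the lemma itself.
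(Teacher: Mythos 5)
Your proof is correct and takes essentially the same approach as the paper: direct computation of $\phi''$ by the chain rule, using (\ref{Langer2}) to kill the $\psi'(g)$ term, (\ref{Langer3}) to eliminate $\psi''(g)$, and (\ref{Langer1}) to match the potential. You spell out the cancellations more explicitly than the paper's one-line display, and the closing remark that (\ref{Langer2}) integrates to $f = c\,(g')^{-1/2}$ anticipates the paper's own choice in (\ref{defiff}).
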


\begin{proof}
The proof relies on the following explicit computation
$$
- \eps \phi'' + C(y) \phi = - \eps f'' \psi -  2 \eps f' \psi' g' - B(g(y))  (g')^2 f \psi - \eps f \psi' g'' +  C(y) f\psi.
$$
Note that $f$ may be seen as a modulation of amplitude and $g$ as a change of phase.
\end{proof}

We now detail Langer's computations and construct $f(y)$ and $g(y)$ in the particular case 
$$
B(y) =  U_s'(y_c) (y - y_c), \qquad C(y) =  U_s(y) - c .
$$
Using (\ref{Langer1}), we get
$$
g'^2(y) = {U_s(y) - c \over U_s'(y_c) (g(y) - c)} .
$$
We thus face two difficulties. First, we must take the square root of the right hand side, and second, the right hand side is singular when $g(y) = c$.

We may assume that $\Re U_s'(y_c) > 0$, provided $c$ is small enough. 
Let us choose for $\sqrt{y - y_c}$ the usual square root which is defined provided $y - y_c \notin \rit_-^\star$, where its value changes 
by a  multiplicative factor $-1$.
We then define $B_1$ to be the primitive of $\sqrt{B}$. Then $B_1$ is well defined and smooth except when $y - y_c \notin \rit_-^\star$,
where its value change by a multiplicative factor $-1$.

Similarly we define $\sqrt{C}$ to be the usual square root of $U_s(y) - c$, which is well defined except when $U_s(y) - c \in \rit_-^\star$.
Let $C_1$ be its primitive, defined except when $U_s(y) - c \in \rit_-^\star$, where it has a multiplicative factor of $-1$.
 
Using (\ref{Langer1}),  we obtain
$$
B_1(g(y)) = C_1(y) .
$$
This defines $g(y)$ in a smooth way, except at points where $C_1(y)$ is not smooth, namely when $U_s(y) - c \in \rit_-$.
When we cross this line, $C_1(y)$ is multiplied by $-1$. But $B_1(g(y))$ is also multiplied by the same factor if $g(y)$ crosses $y - y_c \in \rit_-^\star$
at the same time. Thus $g(y)$ is smooth, and the choice of the square root is not important.
This gives
\beq \label{explicitg}
g(y) = y_c + \Bigl( {3 \over 2 \sqrt{U_s'(y_c)}} \int_{y_c}^y  \sqrt{U_s(z) - c} \, dz \Bigr)^{2/3}.
\eeq 
Using the explicit expression of $B_1$, we see that $g(y)$ is well defined for any $y$ in a neighborhood of $\rit_+$.
Moreover, $g(y_c) = y_c$ and $g'(y_c) = 1$. Note that $g(y)$ is in general a complex number since $c$ is a complex number.

When $y \to + \infty$, we have
\beq \label{asymptg}
g(y) \sim \Bigl( {3 \over 2} \Bigr)^{2/3} \Bigl( {U_+ - c \over U_c'} \Bigr)^{1/3} y^{2/3},
\eeq
and $g'(y)$ behaves like $y^{-1/3}$.
Now we note that (\ref{Langer2}) can be solved by choosing
\beq \label{defiff}
f(y) = {1 \over \sqrt{g'(y)}} ,
\eeq
which can be made explicit using (\ref{explicitg}).
For large $y$, $f(y)$ behaves like $y^{1/6}$.
As a consequence, $f'' /f$ is uniformly bounded  in a neighborhood of $\rit_+$
and decays like $y^{-2}$ at infinity.

Langer's transformation leads us to introduce the modified Airy functions
$$
Ai_a(y) = {1 \over \sqrt{g'(y)}} Ai \Bigl( \gamma (g(y) - y_c) \Bigr)
$$
and
$$
Ci_a(y) = {1 \over \sqrt{g'(y)}} Ci \Bigl( \gamma (g(y) - y_c) \Bigr).
$$
As $f'' / f$ is uniformly bounded, these two functions $Ai_a$ and $Ci_a$ are approximate solutions of Airy equation in the sense that
\beq \label{approxAiry}
| Airy(Ai_a) | \lesssim \eps |Ai_a|,  \qquad | Airy(Ci_a) | \lesssim \eps |Ci_a|. 
\eeq
Let us briefly describe $Ai_a$.
As $y \to + \infty$, combining (\ref{asymptg}) and (\ref{asymptAi}), we get
\beq \label{largeAiy}
\log Ai_a(y) \sim -C_a \gamma^{3/2} y 
\eeq
for some positive constant $C_a$, and similarly, $\log Ci_a \sim C_c \gamma^{3/2} y$.
Thus, for large $y$, both $Ai_a$ and $Ci_a$ have an exponential behavior with a speed $\gamma^{3/2}$, larger than $\gamma$.
This is coherent with the observation that, for large $y$, solutions to $Airy \, \phi = 0$ behave like solutions to $\eps \partial_y^2 \phi = U_+ \phi$,
namely like exponentials of speed $\sqrt{\eps^{-1} U_+}$, which is of order $\eps^{-1/2}$, namely of order $\gamma^{3/2}$.

For small $\gamma (y - y_c)$ however, the natural scale which appears in $\gamma^{-1}$. Let us detail this point and introduce
the logarithmic derivative
\beq \label{definitionmu}
\mu(y) = {\partial_y Ai_a(y) \over Ai_a(y)},
\eeq
which can be seen as a measure of the "scale" of variations of $Ai_a$. Then, provided $\gamma (y - y_c)$ is large,
$$
\mu(y) =  \gamma g'(y)  {Ai'(\gamma (g(y) - y_c)) \over Ai( \gamma (g(y) - y_c))} 
\Bigl[ 1 + O(\gamma^{-1}) \Bigr]
$$
$$
\sim  \gamma^{3/2} g'(y) (g(y) - y_c)^{1/2}  \Bigl[ 1 + O(\gamma^{-1}) \Bigr].
$$
Thus $\mu(y) \to C \gamma^{3/2}$ when $\gamma (y - y_c) \to + \infty$. On the contrary, for bounded $\gamma (y - y_c)$,
as $g'(y_c) = 1$,
$$
\mu(y) =\gamma g'(y)^2   \Bigl[ \gamma (y - y_c) \Bigr]^{1/2} \Bigl[ 1 + o(1) \Bigr],
$$
thus $\mu(y)$ is of order $\gamma$.
We thus have a transition between a decay rate $\gamma$ and $\gamma^{3/2}$ as $y - y_c$ increases.

We also note that $g(y)$ is defined in a neighborhood of $\rit_+$ and thus is well defined for $| \Im y | \le  \min( \beta_0 \Re y, \beta_1)$,
provided $\beta_1$ is small enough. As a consequence, $Ai_a$ is defined on a neighborhood of $\rit_+$.
However it is exponentially increasing when $\Im y$ increases, and only remains bounded when $| \Im y | \lesssim | \gamma |^{-1}$.
This remark will be crucial when we will move the integration contour  of the Green function for Navier Stokes equations downwards into negative
$\Im c$.  We must take
\beq \label{constraintdelta}
\beta_1 \lesssim | \gamma |^{-1}.
\eeq
In particular, the width of the "pencil domain" $\Gamma_{\beta_0,\beta_1}$ goes to $0$ as $\nu$ goes to $0$.


\subsection{Study of $Airy$}


We now construct two independent solutions to $Airy$ equation,  starting from the approximate solutions $Ai_a$ and $Ci_a$. 
For this we will design an iterative scheme, using an approximate solver for $Airy$ equation.

More precisely we first introduce the approximate Green function
\beq \label{Gaiapp}
G^{Ai}_{app}(x,y) = {1 \over \eps W^{a}}  \left\{ \begin{aligned} 
Ci_a(y) Ai_a(x) \quad \hbox{if} \quad y < x,
\\
 Ci_a(x) Ai_a(y) \quad \hbox{if} \quad y > x,
 \end{aligned}\right.
\eeq
where $W^{a}$ is the Wronskian determinant of $Ai_a$ and $Ci_a$, which is of order $\gamma$.
Note that $G^{Ai}_{app}$ is symmetric in $x$ and $y$.
Let us detail some properties of $G^{Ai}_{app}$.

\begin{lemma}
Let us define $\zeta(x)$ by
\beq \label{defizeta}
\zeta(x) = \min \Bigl( |\gamma|, \max(|x -y_c|^{-1},1) \Bigr)
\eeq
and $\theta(x)$ by 
$$
\theta(x) = {2 \over 3} \Re \Bigl[ \gamma^{3/2} (g(x) - y_c)^{3/2} \Bigr].
$$
Then we have
\beq \label{boundintG}
\int | G^{Ai}_{app}(x,y) | \, dy \lesssim \zeta(x).
\eeq
and, for $\mu \le \gamma / 10$,
\beq \label{boundintG2}
\int | G^{Ai}_{app}(x,y) | e^{- \mu y} \, dy \lesssim e^{-\mu x}  \zeta(x) .
\eeq
Moreover, for $z > x$,
\beq \label{boundintG3}
\int_{y \ge z} \ | G^{Ai}_{app}(x,y) |  \, dy \lesssim  \zeta^{1/4}(x) \zeta^{3/4}(z) e^{\theta(x) - \theta(z)} .
\eeq
\end{lemma}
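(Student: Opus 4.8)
The plan is to deduce the three integral inequalities from pointwise estimates on the modified Airy functions $Ai_a$ and $Ci_a$, exactly as the bound (\ref{studyG}) on $G^{Airy}$ was obtained. The first step is to record the pointwise behaviour. Writing $\xi = \gamma(g(y) - y_c)$, the uniform asymptotics of $Ai$ and $Ci$ in Appendix \ref{appendix1} give a prefactor that behaves like $|\xi|^{-1/4}$ for $|\xi|$ large and saturates at $O(1)$ for $|\xi|$ bounded, together with the exponential factors $e^{\mp\frac{2}{3}\xi^{3/2}}$; combining this with the Langer amplitude $1/\sqrt{g'(y)}$ and the asymptotics $g'(y)\sim y^{-1/3}$, $g(y)-y_c\sim y^{2/3}$ from (\ref{asymptg}) one obtains
$$
|Ai_a(y)| \lesssim |\gamma|^{-1/4}\,\zeta(y)^{1/4}\,e^{-\theta(y)}, \qquad |Ci_a(y)| \lesssim |\gamma|^{-1/4}\,\zeta(y)^{1/4}\,e^{\theta(y)},
$$
with $\zeta,\theta$ as in the statement. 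Alongside this I would record: $\theta(y_c)=0$, $\theta$ is nondecreasing along $\Gamma_{\beta_0,\beta_1}(y_c)$, and $\theta'(y)=\Re\mu(y)$ (with $\mu$ the logarithmic derivative (\ref{definitionmu})) is of size $|\gamma|^{3/2}\zeta(y)^{-1/2}$, hence $\gtrsim|\gamma|$ uniformly — this is where $0<\arg\gamma<\pi/3$ enters, and it is what makes the transition scale $|y-y_c|\sim|\gamma|^{-1}$ harmless. Finally $\eps W^a$ is of size $\eps\gamma=U_s'(y_c)\gamma^{-2}$, i.e. of size $|\gamma|^{-2}$, since $\eps\gamma^3=U_s'(y_c)$ and $W^a$ is of order $\gamma$.

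For (\ref{boundintG}) and (\ref{boundintG2}) I would split $\int=\int_{y<x}+\int_{y>x}$. On $\{y>x\}$ one has $G^{Ai}_{app}(x,y)=Ci_a(x)Ai_a(y)/(\eps W^a)$; after the change of variable $d\theta=\theta'(y)\,dy$ (so $dy\sim|\gamma|^{-3/2}\zeta(y)^{1/2}\,d\theta$) and using $\mu\le|\gamma|/10\ll\theta'$, the integral $\int_x^{+\infty}|Ai_a(y)|e^{-\mu y}\,dy$ is dominated by its endpoint and bounded by $|\gamma|^{-7/4}\zeta(x)^{3/4}e^{-\theta(x)-\mu x}$; multiplying by $|\gamma|^2|Ci_a(x)|\lesssim|\gamma|^{7/4}\zeta(x)^{1/4}e^{\theta(x)}$ gives $\zeta(x)e^{-\mu x}$, the powers of $|\gamma|$ cancelling. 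On $\{y<x\}$ one has $G^{Ai}_{app}(x,y)=Ci_a(y)Ai_a(x)/(\eps W^a)$; since $|Ci_a(y)|e^{-\mu y}$ still increases in $y$ at rate $\gtrsim\theta'-\mu\gtrsim|\gamma|$ (uniformly, including across $|y-y_c|\sim|\gamma|^{-1}$, where $Ci_a$ is flat on exactly the scale $1/\Re\mu$), the integral $\int_0^x|Ci_a(y)|e^{-\mu y}\,dy$ is again endpoint-dominated and bounded by $|\gamma|^{-7/4}\zeta(x)^{3/4}e^{\theta(x)-\mu x}$, and multiplying by $|\gamma|^2|Ai_a(x)|$ yields $\zeta(x)e^{-\mu x}$ once more. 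Taking $\mu=0$ gives (\ref{boundintG}), keeping $\mu$ gives (\ref{boundintG2}).

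For (\ref{boundintG3}) the point is that when $z>x$ the whole region $\{y\ge z\}$ lies inside $\{y>x\}$, so $G^{Ai}_{app}(x,y)=Ci_a(x)Ai_a(y)/(\eps W^a)$ there and
$$
\int_{y\ge z}|G^{Ai}_{app}(x,y)|\,dy \lesssim |\gamma|^2\,|Ci_a(x)|\int_z^{+\infty}|Ai_a(y)|\,dy .
$$
By the same endpoint domination $\int_z^{+\infty}|Ai_a(y)|\,dy\lesssim|\gamma|^{-7/4}\zeta(z)^{3/4}e^{-\theta(z)}$, while $|Ci_a(x)|\lesssim|\gamma|^{-1/4}\zeta(x)^{1/4}e^{\theta(x)}$; the $|\gamma|$-powers again cancel, leaving precisely $\zeta(x)^{1/4}\zeta(z)^{3/4}e^{\theta(x)-\theta(z)}$.

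The hard part will be the first step: making the pointwise bounds on $Ai_a$, $Ci_a$, the monotonicity of $\theta$, and the lower bound $\Re\mu(y)\gtrsim|\gamma|$ genuinely uniform over the entire pencil domain $\Gamma_{\beta_0,\beta_1}(y_c)$ — in particular across the annular transition region where $\xi=\gamma(g(y)-y_c)$ passes from $O(1)$ to large, and for $y$ to the left of $\Re y_c$, where $g(y)-y_c$ is genuinely complex, so the square roots and the $3/2$-power in Langer's construction (\ref{explicitg}) must be tracked carefully (recall also $\beta_1\lesssim|\gamma|^{-1}$, which is exactly what keeps $Ai_a$ bounded on this domain). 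Once those uniform estimates, and the ensuing "the integral is controlled by its endpoint" bounds such as $\int_x^{+\infty}e^{-\theta(y)}\,dy\lesssim e^{-\theta(x)}/\theta'(x)$, are in place, the three inequalities are just bookkeeping of the exponents of $|\gamma|$ and $\zeta$.
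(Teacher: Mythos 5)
Your argument is correct and follows essentially the same path as the paper: both reduce the lemma to the asymptotics of $Ai$ and $Ci$ together with the Langer scalings $g(y)\sim y^{2/3}$, $g'(y)\sim y^{-1/3}$, and then control the resulting integrals by their contribution near the lower endpoint. The paper performs the explicit change of variable $z=g(y)$ and estimates the resulting Airy integral directly, whereas you repackage the same information as uniform pointwise bounds $|Ai_a|\lesssim|\gamma|^{-1/4}\zeta^{1/4}e^{-\theta}$, $|Ci_a|\lesssim|\gamma|^{-1/4}\zeta^{1/4}e^{\theta}$ together with a generic endpoint-domination lemma of the type found in Appendix \ref{appendix3} — a cleaner factorization, but the same bookkeeping of the powers of $|\gamma|$ and $\zeta$.
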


\Remarks
For large $y$, $U_s(y)$ is bounded, so as $x \to + \infty$, Airy equation turns to
$\eps \partial_y^2 \phi = U_+ \phi + \delta_x$, whose solutions are exponentials, in coherence with (\ref{largeAiy}).
This leads to the maximum $\max(|x - y_c|^{-1},1)$ which appears in (\ref{boundintG}), 
and which was absent from the similar formula (\ref{studyG}) for $G^{Airy}$.
Note that the function $\zeta(x)$ describes three regimes,  namely $|x - y_c| \le |\gamma|^{-1}$, $|\gamma|^{-1} \le | x - y_c | \le 1$
and  $| x - y_c | \ge 1$, corresponding to three different regimes of Airy equation.

Moreover, as $G^{Ai}_{app}(x,y)$ is symmetric in $x$ and $y$, integrals over $x$ of $G^{Ai}_{app}(x,y)$ satisfy similar bounds.

\begin{proof}
We note that
$$
I := \int_{y \ge x} | G^{Ai}_{app}(x,y) | \, dy 
=  {| Ci_a(x) | \over | \eps W^a |}  \int_{y \ge x} {1 \over |\sqrt{g'(y)}|} \Bigl| Ai \Bigl( \gamma (g(y) - y_c) \Bigr) \Bigr| \, dy.
$$
Setting $z = g(y)$ we get
$$
I =    {| Ci_a(x) | \over | \eps W^a | }  \int_{z \ge g(x)} {1 \over |g'(y)|^{3/2}} \Bigl| Ai \Bigl( \gamma (z- y_c) \Bigr) \Bigr| \, dz.
$$
Using (\ref{asymptg}), we obtain that $g(y) \sim C y^{2/3}$ and $g'(y) \sim C y^{-1/3}$. Thus $(g')^{-3/2} \sim C y^{1/2} \sim C g(y)^{3/4} \sim C z^{3/4}$,
for various constants $C$. This gives
$$
| I |  \lesssim    {|Ci_a(x)| \over | \eps W^a |}  \int_{z \ge g(x)} \langle z  \rangle^{3/4}  \Bigl| Ai \Bigl( \gamma (z- y_c) \Bigr) \Bigr| \, dz,
$$
and, using (\ref{asymptAi}), provided $\gamma (x - y_c)$ is large enough,
$$
| I | \lesssim {1 \over | \eps W^a |} {1 \over | \sqrt{g'(x)} |} {e^{+ 2 \gamma^{3/2} (g(x) - y_c)^{3/2}/3} \over \gamma^{1/4} ( g(x) - y_c )^{1/4}}  
$$
$$
\times
\int_{z \ge g(x)} { \langle z  \rangle^{3/4} \over \gamma^{1/4} (z - y_c)^{1/4}} \exp \Bigl( {- 2 \gamma^{3/2} (z - y_c)^{3/2} \over 3 } \Bigr)\, dz
$$
$$
\lesssim {1 \over \eps W^a \gamma^2} {\langle g(x) \rangle \over g(x) - y_c}
\lesssim \min( |x - y_c|^{-1}, 1).
$$
If $\gamma (x - y_c)$ is small or bounded, then, as $Ai_a$ is integrable, we directly have $| I | \lesssim | \gamma |$.
The bound for $y \le x$ is similar, which gives (\ref{Gaiapp}).
The proof of (\ref{boundintG2}) is similar, up to an extra $e^{\mu y}$ factor in all the integrals, using the results recalled in Appendix \ref{appendix3}.

To prove (\ref{boundintG3}), we observe that the corresponding integral is bounded by
$$
| I |  \lesssim {1 \over | \eps W^a |} {1 \over | \sqrt{g'(x)} |} {e^{+ 2 \gamma^{3/2} (g(x) - y_c)^{3/2}/3} \over \gamma^{1/4} ( g(x) - y_c )^{1/4}}  
$$
$$
\times
\int_{t \ge g(z)} { \langle t  \rangle^{3/4} \over \gamma^{1/4} (t - y_c)^{1/4}} \exp \Bigl( {- 2 \gamma^{3/2} (t - y_c)^{3/2} \over 3 } \Bigr)\, dz
$$
$$
\lesssim
{ \langle g(x) \rangle^{1/4}  \langle g(z)  \rangle^{3/4} \over  ( g(x) - y_c )^{1/4} (g(z) - y_c)^{3/4}} 
e^{+ 2 \gamma^{3/2} (g(x) - y_c)^{3/2}/3 -  2 \gamma^{3/2} (g(z) - y_c)^{3/2}/3}
$$
Then provided $\gamma (x - y_c)$ and $\gamma (z - y_c)$ are large enough,
$$
| I | \lesssim \min( | x - y_c |^{-1},1)^{1/4} \min( | z - y_c |^{-1},1)^{3/4} e^{\theta(x) - \theta(z)} .
$$
The case $\gamma (z - y_c)$ bounded is straightforward.
\end{proof}

\begin{proposition} \label{Airy0}
There exists two independent solutions $\phi^{Ai}_\pm(y)$ of
$$
Airy(\phi^{Ai}_\pm) = 0
$$
such that
\beq \label{propphiAi}
\phi^{Ai}_-(y) = Ai_a(y) \Bigl[ 1 + O(\gamma^{-2}) \Bigr], 
\eeq
and
\beq \label{propphiCi}
\phi^{Ai}_+(y) = Ci_a(y) \Bigl[ 1 + O(\gamma^{-2}) \Bigr] . 
 \eeq
\end{proposition}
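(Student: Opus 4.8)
The plan is to correct the approximate solutions $Ai_a$ and $Ci_a$ into exact ones by a fixed point argument, using the approximate Green function $G^{Ai}_{app}$ of the previous Lemma as an approximate right inverse of $Airy$. First I would introduce the operator
$$
(T f)(y) = \int G^{Ai}_{app}(x,y) \, f(x) \, dx = {Ai_a(y) \over \eps W^a} \int_0^y Ci_a(x) f(x) \, dx + {Ci_a(y) \over \eps W^a} \int_y^{+\infty} Ai_a(x) f(x) \, dx,
$$
and check by a direct computation that, since $Ai_a$ and $Ci_a$ solve $Airy(\cdot) = 0$ only up to the $O(\eps)$ error recorded in (\ref{approxAiry}), all the Wronskian boundary terms produced by $\partial_y^2$ cancel, leaving the clean identity $Airy \circ T = \mathrm{Id} + \mathcal{E}$, where $\mathcal{E}$ is the integral operator obtained from $T$ by replacing the outer factors $Ai_a(y)$, $Ci_a(y)$ with $Airy(Ai_a)(y) = -\eps (f''/f) Ai_a(y)$, $Airy(Ci_a)(y) = -\eps (f''/f) Ci_a(y)$ coming from Langer's transformation; in particular $|\mathcal{E} f(y)| \lesssim \eps \int |G^{Ai}_{app}(x,y)| \, |f(x)| \, dx$.

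Next I would introduce two weighted sup-norm spaces $\mathcal{X}_-$ and $\mathcal{X}_+$ in which the size of a function is measured against $|Ai_a|$, respectively $|Ci_a|$, the correction factors being those of the three regimes of $\zeta$ in the previous Lemma. The bounds (\ref{boundintG})--(\ref{boundintG3}) then give $\|T\|_{\mathcal{X}_\pm \to \mathcal{X}_\pm} \lesssim |\gamma|$; combined with the scaling $\eps \gamma^3 = U_s'(y_c) = O(1)$ and $|W^a| \sim |\gamma|$, so that $1/(\eps W^a) \sim \gamma^2$, together with the extra $\langle y \rangle^{-2}$ decay of $f''/f$, this yields $\|\mathcal{E}\|_{\mathcal{X}_\pm \to \mathcal{X}_\pm} \lesssim |\gamma|^{-2}$. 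For $|\gamma|$ large enough, $\mathrm{Id} + \mathcal{E}$ is therefore invertible on $\mathcal{X}_\pm$ by a Neumann series, and I would set
$$
\phi^{Ai}_- = Ai_a - T (\mathrm{Id}+\mathcal{E})^{-1} Airy(Ai_a), \qquad \phi^{Ai}_+ = Ci_a - T (\mathrm{Id}+\mathcal{E})^{-1} Airy(Ci_a).
$$
Then $Airy(\phi^{Ai}_\pm) = 0$ by construction, and since $\| Airy(Ai_a) \|_{\mathcal{X}_-} + \| Airy(Ci_a) \|_{\mathcal{X}_+} \lesssim \eps \sim |\gamma|^{-3}$, the correction terms have $\mathcal{X}_\pm$-norm $\lesssim |\gamma| \cdot |\gamma|^{-3} = |\gamma|^{-2}$, which is exactly (\ref{propphiAi})--(\ref{propphiCi}). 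Independence follows because $Airy(\phi) = 0$ has no first order term, so $W(\phi^{Ai}_-, \phi^{Ai}_+)$ is constant and equals $W^a(1 + O(|\gamma|^{-2})) \neq 0$.

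The step I expect to be the main obstacle is controlling the exponentially growing solution $\phi^{Ai}_+$, equivalently proving $\|\mathcal{E}\|_{\mathcal{X}_+ \to \mathcal{X}_+} \ll 1$: a priori $T$ produces only solutions decaying at $+\infty$, and the residual $Airy(Ci_a)$, being of the size of the growing function $Ci_a$, is not obviously integrable against either branch of $G^{Ai}_{app}$. The resolution is that $Airy(Ci_a) = -\eps (f''/f) Ci_a$ carries the extra $\langle y\rangle^{-2}$ factor, so that when it is paired with the decaying branch $Ai_a$ of $G^{Ai}_{app}$ --- where the product $Ai_a Ci_a$ is merely bounded --- the integrals converge; one must then check that this structure survives the iteration and interacts correctly with the $\zeta$-weights across the critical layer, where $G^{Ai}_{app}$ changes regime, which is precisely what the bounds (\ref{boundintG2}) and (\ref{boundintG3}) are designed to supply.
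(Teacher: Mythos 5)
Your argument is essentially the same as the paper's: both construct the exact solutions by correcting $Ai_a$, $Ci_a$ through an iteration/Neumann series built on the approximate solver $\mathcal{S}^{Ai}_{app}$ associated with $G^{Ai}_{app}$, and both trace the $O(\gamma^{-2})$ gain to the fact that $\eps\gamma \sim \gamma^{-2}$ together with the bounded, $\langle y\rangle^{-2}$-decaying factor $f''/f$ produced by Langer's transformation. Where you go further is in being explicit about the two distinct weighted spaces $\mathcal{X}_-$ and $\mathcal{X}_+$ (the paper only names the $|Ai_a|$-weighted space, tacitly asking the reader to repeat the argument for $Ci_a$), in diagnosing and resolving the genuine subtlety that $T$ naturally produces the decaying branch so the growing solution must be controlled via the $\langle y\rangle^{-2}$ factor and the mixed $Ai_a\, Ci_a$ products, and in supplying the Wronskian computation that actually establishes independence of $\phi^{Ai}_\pm$, a point the statement asserts but the paper's proof does not address. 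These are improvements of detail within the same method, not a different route.
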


\begin{proof}
Using this approximate Green function, we can construct an approximate solver for the $Airy$ equation, defined by 
\beq \label{solverAi}
{\cal S}^{Ai}_{app} (\phi) (y) =  \int_{\rit_+} G^{Ai}_{app}(x,y) \phi(x) \, dy .
\eeq
Note that the integral, which holds on $\rit_+$ can be transformed using analyticity into any line going from $0$ to $+ \infty$ in the "pencil" domain 
$\Gamma_{\beta_0,\beta_1}$.

Following Appendix \ref{appendix2} (second example), we define the iterative scheme
$$
R_n = Airy \, ( {\cal S}^{Ai}_{app}  \phi_n ) = - \eps f^{-1} f''  {\cal S}^{Ai}_{app}  \phi_n , \qquad \phi_{n+1} = ({\cal S}^{Ai}_{app})^{-1}  R_n,
$$
starting with $\phi_0 = Ai_a$, or $\phi_0 = Ci_a$. As $f'' / f$ is bounded and decays like $y^{-2}$, 
we note that $Airy \, \circ \, {\cal S}^{Ai}_{app}$ is a contraction in $L^\infty(|Ai_a(x)| dx)$.
The series $\sum_n \phi_n$ thus converges, towards a solution $\phi^{Ai}_-$ of Airy equation. Moreover this series
is an asymptotic expansion of $\phi^{Ai}_-$ in powers of $\gamma \eps \sim \gamma^{-2}$, leading to (\ref{propphiAi}) and (\ref{propphiCi}).  
\end{proof}

Once genuine solutions $\phi^{Ai}_\pm$ of Airy equation are constructed, it is possible to build a genuine Green function $G^{Ai}$
 and a genuine solver ${\cal S}^{Ai}$ for this equation. The expression of $G^{Ai}$ is the same as that of $G^{Ai}_{app}$ given in (\ref{Gaiapp}),
 except that $Ai_a$ must be replaced by $\phi^{Ai}_-$ and $Ci_a$ by $\phi^{Ai}_+$.
 Similarly,  ${\cal S}^{Ai}$ is given by (\ref{solverAi}) up to the replacement of $G^{Ai}_{app}$ by $G^{Ai}$.
 The solution of
 $$
 Airy \, \phi = f
 $$
 is then simply given by 
 $$
\phi = {\cal S}^{Ai} f ,
 $$
 namely by convolution with the exact Green function $G^{Ai}(x,y)$.
 
 Let us now detail the bounds on the inverse of $Airy$.
 
 \begin{lemma} \label{solutionAiry}
 Let $\sigma > 0$ with $\sigma < \gamma / 10$.
 Let $f \in X_\sigma$ be an holomorphic function on $\Gamma_{\beta_0,\beta_1}$.
 Then there exists an holomorphic function $\phi \in X_\sigma$, defined on $\Gamma_{\beta_0,\beta_1}$,
 such that
 \beq \label{eqq23}
 Airy \, \phi = f,
 \eeq
 and such that
 \beq \label{bbb1}
 | \phi(y) | \lesssim \zeta(y) e^{-\sigma y} \| f \|_{X_\sigma} \lesssim | \gamma |  e^{-\sigma y} \| f \|_{X_\sigma},
 \eeq
\beq \label{bbb2}
 | \partial_y \phi(y) | \lesssim | \gamma |^2 e^{-\sigma y} \| f \|_{X_\sigma},
 \eeq
 and
 \beq \label{bbb3}
 | \partial_y^2 \phi(y) | \lesssim | \gamma |^3 e^{-\sigma y}  \| f \|_{X_\sigma}.
 \eeq
 \end{lemma}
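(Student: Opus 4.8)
The plan is to take for $\phi$ the function produced by the exact Airy solver constructed just above, namely
\[
\phi(y) = {\cal S}^{Ai} f(y) = \int_{\rit_+} G^{Ai}(x,y)\, f(x)\, dx,
\]
the integral being taken along any ray from $0$ to $+\infty$ inside the pencil $\Gamma_{\beta_0,\beta_1}$, which is licit by holomorphy of $f$ and of $G^{Ai}(\cdot,y)$. By construction $Airy\,\phi = f$, and $\phi$ is holomorphic in $y$ on $\Gamma_{\beta_0,\beta_1}$. The first point to record is that $G^{Ai}$ satisfies the very same estimates (\ref{boundintG})--(\ref{boundintG3}) as the approximate kernel $G^{Ai}_{app}$: by Proposition \ref{Airy0} one has $\phi^{Ai}_- = Ai_a\,(1 + O(\gamma^{-2}))$ and $\phi^{Ai}_+ = Ci_a\,(1 + O(\gamma^{-2}))$, with Wronskian again of order $\gamma$, so that $G^{Ai}(x,y) = G^{Ai}_{app}(x,y)\,(1 + O(\gamma^{-2}))$ and in particular $G^{Ai}$ is still symmetric in $x$ and $y$.

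Bound (\ref{bbb1}) is then immediate:
\[
|\phi(y)| \le \|f\|_{X_\sigma} \int_{\rit_+} |G^{Ai}(x,y)|\, e^{-\sigma x}\, dx \lesssim \zeta(y)\, e^{-\sigma y}\, \|f\|_{X_\sigma} \lesssim |\gamma|\, e^{-\sigma y}\, \|f\|_{X_\sigma},
\]
where the middle inequality is (\ref{boundintG2}) applied in the variable $x$ (using the symmetry of $G^{Ai}$), whose hypothesis $\mu \le \gamma/10$ is exactly our assumption $\sigma < \gamma/10$, and the last inequality uses $\zeta(y) \le |\gamma|$. In particular $\phi \in X_\sigma$.

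For (\ref{bbb2}) I would differentiate the kernel in $y$: on $\{x < y\}$ one has $\partial_y G^{Ai}(x,y) = \bigl(\partial_y \phi^{Ai}_-(y)/\phi^{Ai}_-(y)\bigr) G^{Ai}(x,y)$, and on $\{x > y\}$ the analogous identity with $\phi^{Ai}_+$ in place of $\phi^{Ai}_-$; by Proposition \ref{Airy0} these logarithmic derivatives coincide, up to $1 + O(\gamma^{-2})$, with the function $\mu$ of (\ref{definitionmu}) and its $Ci_a$-analogue. Hence $|\partial_y \phi(y)| \lesssim |\mu(y)|\, \zeta(y)\, e^{-\sigma y}\, \|f\|_{X_\sigma}$, and it remains to see that $|\mu(y)|\,\zeta(y) \lesssim |\gamma|^2$. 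This I would check regime by regime, following the discussion after (\ref{definitionmu}): for $|y - y_c| \le |\gamma|^{-1}$ one has $|\mu(y)| \sim |\gamma|$ and $\zeta(y) \sim |\gamma|$; for $|\gamma|^{-1} \le |y - y_c| \le 1$ one has $|\mu(y)| \sim |\gamma|^{3/2}|y - y_c|^{1/2}$ and $\zeta(y) \sim |y - y_c|^{-1}$, whose product is at most $|\gamma|^{3/2}|\gamma|^{1/2}$; and for $|y - y_c| \ge 1$ one has $|\mu(y)| \sim |\gamma|^{3/2}$ and $\zeta(y) \sim 1$. In each case the product is $\lesssim |\gamma|^2$. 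Finally, (\ref{bbb3}) comes from the equation itself: from $Airy\,\phi = f$ we get $\eps \partial_y^2 \phi = (U_s - c)\phi - f$, so $|\partial_y^2 \phi| \le |\eps|^{-1}\bigl(|U_s - c|\,|\phi| + |f|\bigr)$; since $\eps \gamma^3 = U_s'(y_c)$ with $U_s'(y_c)$ bounded above and below one has $|\eps|^{-1} \sim |\gamma|^3$, and since $|U_s(y) - c|\,\zeta(y) \lesssim 1$ — a three-regime check using $|U_s(y) - c| \lesssim |y - y_c|$ near $y_c$ and $|U_s(y) - c|$ bounded for $|y - y_c|$ large — one obtains $|\partial_y^2 \phi(y)| \lesssim |\gamma|^3 e^{-\sigma y}\|f\|_{X_\sigma}$. (Alternatively, (\ref{bbb2}) and (\ref{bbb3}) follow from (\ref{bbb1}) by Cauchy estimates on disks of radius $\sim |\gamma|^{-1}$, which fit inside $\Gamma_{\beta_0,\beta_1}$ by (\ref{constraintdelta}) and over which $e^{-\sigma y}$ varies only by a bounded factor because $\sigma < |\gamma|/10$.)

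The only genuinely substantial step is the preliminary one: that $G^{Ai}$ inherits the integral bounds of $G^{Ai}_{app}$, that the defining integral converges, and that $\phi \in X_\sigma$. This requires controlling the errors $\phi^{Ai}_\pm - Ai_a$ (resp. $-Ci_a$) uniformly — including when they are paired against the rapidly decaying companion solution — and checking that the polynomial factor $\langle z\rangle^{3/4}$ introduced by Langer's change of variable is absorbed by the exponential decay of $Ai$ along the admissible rays of the pencil; but all the needed material is already in Proposition \ref{Airy0} and in the Lemma preceding it, so this is bookkeeping rather than a new obstacle. Keeping track of the three regimes of $\mu$ and $\zeta$ in the derivative estimates is the other point demanding care, though it is purely elementary.
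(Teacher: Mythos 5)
Your proposal is correct and follows essentially the same strategy as the paper: write $\phi={\cal S}^{Ai}f$, transfer the kernel bounds (\ref{boundintG})--(\ref{boundintG3}) to the exact kernel via Proposition~\ref{Airy0}, obtain (\ref{bbb1}) from (\ref{boundintG2}) with the hypothesis $\sigma<\gamma/10$, and read off (\ref{bbb3}) from the equation $\eps\partial_y^2\phi=(U_s-c)\phi-f$ using $|U_s-c|\,\zeta\lesssim 1$. Two procedural differences are worth recording. First, the paper in its written proof starts from $G^{Ai}_{app}$ and carries out the contraction iteration inside the proof, whereas you invoke the exact $G^{Ai}$ directly, absorbing the iteration into the already-established Proposition~\ref{Airy0}; these are equivalent, and your version is slightly cleaner bookkeeping. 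Second, for (\ref{bbb2}) the paper simply says ``by interpolation,'' which implicitly uses (\ref{bbb1}) and (\ref{bbb3}) together with a Cauchy or Gagliardo--Nirenberg step on disks of radius $\sim|\gamma|^{-1}$ (which is your parenthetical alternative). You instead differentiate the kernel, so that $\partial_y G^{Ai}(x,y)=\bigl(\partial_y\phi^{Ai}_\mp/\phi^{Ai}_\mp\bigr)(y)\,G^{Ai}(x,y)$ on the two sides of $x$, and then verify $|\mu(y)|\,\zeta(y)\lesssim|\gamma|^2$ regime by regime; this gives a fully explicit derivation of (\ref{bbb2}), and the regime check is correct. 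One small point to make explicit: the argument uses that the logarithmic derivative of $\phi^{Ai}_+$ (the $Ci_a$ side) has the same magnitude as $\mu(y)$; this follows from the asymptotics of $Ci$ exactly as for $Ai$, but the text only defines $\mu$ via $Ai_a$, so it should be stated.
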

 
 \Remarks
 As we do not enforce any boundary condition at $y = 0$, the solution $\phi$ to (\ref{eqq23}) is not unique.
 Estimates (\ref{bbb2}) and (\ref{bbb3}) may be seen as the quantification of the "smoothning effect" of $Airy^{-1}$.

 \begin{proof}
  We solve (\ref{eqq23}) by iteration, first introducing
$$
\phi_1(y) = \int_0^{+ \infty} G^{Ai}_{app} (x,y) f(x) \, dx.
$$
Using (\ref{boundintG2}), with the integral over $x$ instead of $y$, we obtain
$$
| \phi_1(y) | \le \zeta(y) e^{-\sigma y}  \| f \|_{X_\sigma}.
$$
Now 
$$
\eps \partial_y^2 \phi_1 = f + (U_s - y) \phi_1.
$$
We note that $\zeta(y) (U_s - y)$ is bounded, thus
$$
|\eps| | \partial_y^2 \phi_1 | \lesssim e^{-\sigma y}  \| f \|_{X_\sigma},
$$
which gives (\ref{bbb3}).
Now 
$$
Airy \, \phi_1 = - \eps {f'' \over f} \phi_1 = O(\gamma^{-2}),
$$
since $\eps \zeta(y) \le \eps \gamma \lesssim \gamma^{-2}$. The construction may then be iterated to give the Lemma.
Next, (\ref{bbb2}) comes by interpolation.
 \end{proof}

 
 \subsection{Study of $Airy \circ \partial_y^2$}
 
 
 We now study $Airy \circ \partial_y^2$, which leads to integrate twice the solutions of $Airy$ equation.
 We first build two independent solutions of $Airy \circ \partial_y^2$.
 
 \begin{lemma}
 There exists two solution $\phi^{Ai,2}_\pm$ to $Airy \, \circ \, \partial_y^2 = 0$, one decaying to zero when $y \to + \infty$ and the other
 when $y \to - \infty$. Moreover,
 \beq \label{exphiAi2}
\phi^{Ai,2}_-(y) = g'(y)^{-5/2} Ai \Bigl( \gamma (g(y) - y_c) , 2 \Bigr) \Bigl[ 1 + O(\gamma^{-2}) \Bigr] ,
\eeq
where $Ai(\cdot,2)$ is the second primitive of $Ai$,
and similarly for $\phi^{Ai,2}_+$ with $Ai$ replaced by $Ci$.
\end{lemma}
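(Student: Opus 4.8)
The plan is to exploit the obvious factorisation: the operator $Airy \circ \partial_y^2$ annihilates $1$ and $y$, and for any other solution $\phi$ the function $w := \partial_y^2 \phi$ solves $Airy\, w = 0$, so $w$ is a linear combination of the functions $\phi^{Ai}_\pm$ produced by Proposition \ref{Airy0}. I would therefore take $w = \phi^{Ai}_-$ and define $\phi^{Ai,2}_-$ as a double antiderivative, fixing the two integration constants so as to impose the prescribed decay at $+\infty$:
$$
\phi^{Ai,2}_-(y) = \int_{+\infty}^y (y - s) \, \phi^{Ai}_-(s) \, ds ,
$$
the integral converging because, by \ref{largeAiy} and \ref{propphiAi}, $\phi^{Ai}_- \sim Ai_a$ decays like $\exp(-C_a \gamma^{3/2} y)$; the solution $\phi^{Ai,2}_+$ is built the same way from $\phi^{Ai}_+$. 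Equivalently — and this is the viewpoint I would actually use for the asymptotics — one can re-run the Langer-transformation-plus-iteration scheme of Proposition \ref{Airy0}, now starting from the approximate solution $g'(y)^{-5/2} Ai(\gamma(g(y)-y_c),2)$ and inverting $Airy \circ \partial_y^2$ by composing the genuine Green function $G^{Ai}$ with the explicit double-integration kernel.

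First I would check that $g'^{-5/2} Ai(\gamma(g-y_c),2)$ is the right Ansatz. Differentiating it twice in $y$, the dominant term is the one in which both derivatives fall on the Airy factor, which by the chain rule equals
$$
\gamma^2 (g')^2 \, g'^{-5/2} \, Ai(\gamma(g-y_c)) = \gamma^2 \, g'^{-1/2} \, Ai(\gamma(g-y_c)) = \gamma^2 \, Ai_a(y) ,
$$
i.e. $\gamma^2 \phi^{Ai}_-$ up to the $O(\gamma^{-2})$ error of \ref{propphiAi}; the remaining terms involve the lower-order primitives $Ai(\cdot,1)$, $Ai(\cdot,2)$ together with the slowly varying coefficients $g''$, $g'^{-1}$ and $f''/f = O(\langle y\rangle^{-2})$, each carrying an extra power of $\gamma^{-1}$, and I would absorb them by the contracting iteration exactly as the term $-\eps(f''/f)\phi$ is absorbed in Proposition \ref{Airy0}, the inverse of $Airy \circ \partial_y^2$ gaining the usual factor $\gamma \eps \sim \gamma^{-2}$ at each step.

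To extract the asymptotic form \ref{exphiAi2} I would work from the integral representation: after the substitution $z = g(s)$ with $ds = dz/g'$, the inner and outer integrations each become an integration by parts against $Ai(\gamma(z-y_c))$, which by \ref{asymptAi} is exponentially concentrated at the upper endpoint $z = g(y)$; this turns $Ai(\gamma(\cdot-y_c))$ successively into $\gamma^{-1} Ai(\gamma(\cdot-y_c),1)$ and $\gamma^{-2} Ai(\gamma(\cdot-y_c),2)$, the slowly varying amplitude (governed by \ref{asymptg} and \ref{defiff}) contributing the factor $g'(y)^{-5/2}$, the boundary terms at $+\infty$ vanishing, and the commutator terms generated by the integrations by parts being again lower-order primitives of $Ai$ with $g'$-amplitudes, hence of relative size $O(\gamma^{-2})$. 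After the harmless rescaling by $\gamma^2$ this gives \ref{exphiAi2}.

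The hard part will be the critical layer: for $|\gamma(y-y_c)|$ bounded the Airy primitives are not in their large-argument regime, so the concentration / integration-by-parts argument fails and must be replaced by the crude bound that $Ai$ and its primitives are integrable, in the spirit of the estimate of $\int |G^{Airy}|$ for small $\gamma(x-y_c)$ in \ref{studyG}; I would then have to match the two descriptions across $|\gamma(y-y_c)| \sim 1$ and keep every $g'$-amplitude under control on the pencil domain $\Gamma_{\beta_0,\beta_1}$, whose width is forced by \ref{constraintdelta} to be $\lesssim |\gamma|^{-1}$, using that derivatives of $g$ and $f$ only cost powers of $\langle y\rangle^{-1}$. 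A minor additional point is $\phi^{Ai,2}_+$: since $\Gamma_{\beta_0,\beta_1} \subset \{\Re y \ge 0\}$, its decay ``at $-\infty$'' must be read off the $Ci(\cdot,2)$ factor, and I would fix its two integration constants instead by prescribing the value and the derivative at $y = 0$ to match those of $g'^{-5/2} Ci(\gamma(g-y_c),2)$.
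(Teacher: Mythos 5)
Your proposal takes essentially the same route as the paper: define $\phi^{Ai,2}_-$ as the double antiderivative of $\phi^{Ai}_-$ from $+\infty$, change variables $z = g(s)$, and use the fast decay of $Ai$ together with the slow variation of the $g'$-amplitudes (the paper's Appendix on integrals of rapidly decaying functions) to extract the asymptotic $g'^{-5/2}Ai(\gamma(g-y_c),2)[1+O(\gamma^{-2})]$. The only superficial difference is the normalization: the paper inserts a factor $\gamma$ at each integration, so its $\phi^{Ai,2}_-$ is $\gamma^2$ times yours, which you correctly note is a harmless rescaling. Your added sanity check by direct double differentiation of the Ansatz, your explicit treatment of the critical-layer regime where the large-argument Airy asymptotics fail, and your remark that $\phi^{Ai,2}_+$ cannot be anchored at $+\infty$ (so its constants must be fixed at $y=0$ instead) are all valid elaborations that the paper elides with ``similarly'' and ``repeating the same computations''; none of them changes the underlying argument.
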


\Remark
We note that, by construction, $\phi^{Ai,2}_\pm(0)$ is of order $O(1)$.

\begin{proof}
Solutions to $Airy \, \circ \, \partial_y^2 = 0$ are directly obtained from solutions of $Airy = 0$ by integrating twice in space.
Let us start with $\phi^{Ai}_-$ and let us define $\phi^{Ai,1}_-$ by
$$
\phi^{Ai,1}_-(y) = \gamma \int_y^{+\infty} \phi^{Ai}_-(x) \, dx 
= \gamma \int_y^{+ \infty} Ai_a(x) \Bigl[ 1 +  O(\gamma^{-2}) \Bigr] \, dx.
$$
The leading term is given by 
$$
I :=  \gamma \int_y^{+ \infty} Ai_a(x) \, dx.
$$
Repeating the computations of the previous section, we define $z = g(x)$ and get
$$
I := \gamma \int_{z \ge g(y)} g'(x)^{-3/2} Ai \Bigl( \gamma(z - y_c) \Bigr) \, dz .
$$
We note that the $Ai$ function is rapidly decaying with $z$, which is not the case of $g'(x)$, thus, following the lines of Appendix \ref{appendix3},
$$
\phi^{Ai,1}_-(y) =  g'(y)^{-3/2} Ai \Bigl( \gamma (g(y) - y_c), 1 \Bigr) \Bigl[1 + O(\gamma^{-2}) \Bigr].
$$
We integrate once more and define
$$
\phi^{Ai,2}_-(y) = \gamma \int_y^{+\infty} \phi^{Ai,1}_-(z) \, dz ,
$$
Repeating the same computations leads to (\ref{exphiAi2}).
 We similarly construct $\phi^{Ai,1}_+$ and $\phi^{Ai,2}_+$ which have an exponential behavior at $+ \infty$.
\end{proof}

We now turn to the study of the Green function $G^{Ai \, \partial_y^2}$ of $Airy \circ \partial_y^2$.
This Green function  is the double primitive of $G^{Ai}$, starting from $+ \infty$.
Let us detail its properties. The first primitive $G^{Ai,1}$ of $G^{Ai}$ is defined by
$$
G^{Ai,1}(x,y) = \int_{z \ge y} G^{Ai}(x,z) \, dz.
$$
We recall that $G^{Ai}(x,y)$ is rapidly decreasing on both sides of $x$. 
As a consequence, $G^{Ai,1}$ is rapidly decreasing
when $y > x$. For $y < x$ however, it is no longer decreasing but remains of constant magnitude.
In both cases we have, using (\ref{boundintG}) and (\ref{boundintG3}), 
\beq \label{boundGAi1}
| G^{Ai,1}(x,y) | \lesssim \zeta(x) 1_{y < x} + \zeta(x)^{1/4} \zeta(y)^{1/4} e^{\theta(x) - \theta(y)} 1_{y > x}.
\eeq
We now have 
$$
G^{Ai \, \partial_y^2}(x,y) = \int_{z \ge y} G^{Ai,1}(x,z) \, dz.
$$
 As a consequence, $G^{Ai \, \partial_y^2}$ is rapidly decreasing for $y > x$ and behaves linearly for $y < x$.
 Note that
 $$
 I = \int_{z \ge y} e^{-\theta(z)} \, dz = \int_{z \ge y} e^{-2 \Re  \gamma^{3/2} (g(z) - y_c)^{3/2} / 3} \, dz,
 $$
 and making the change of variables $t = g(z)$,
 $$
| I | \le \int_{t \ge g(y)}  |g'(z)|^{-1} e^{- 2 \Re \gamma^{3/2} (t - y_c)^{3/2}/3} \, dt
 \lesssim {1 \over g'(y)} {e^{-\theta(y)}  \over \gamma^{3/2} (g(y) - y_c)^{1/2}}
 $$
 provided $\gamma (y - y_c)$ is large enough.
 This leads to
 $$
 I \lesssim \gamma^{-3/2} \zeta(x)^{1/2} e^{-\theta(y)}.
 $$
 Using (\ref{boundGAi1}), we have
\beq \label{boundGAi2}
| G^{Ai \, \partial_y^2}(x,y) | \lesssim  \zeta(x) (x -y ) 1_{y < x} 
+ \gamma^{-3/2}  \tilde \zeta(x) 1_{y < x} 
\eeq
$$
+ \gamma^{-3/2}  \zeta(x)^{1/4}  \tilde \zeta(y)^{3/4} e^{ \theta (x) - \theta(y) } 1_{y > x},
$$
where $\tilde \zeta (y) = \sup_{z > y} \tilde \zeta(y)$.

The solver ${\cal S}^{Ai  \, \partial_y^2}$ of $Airy \circ \partial_y^2$ is directly given by
$$
{\cal S}^{Ai  \, \partial_y^2} = \partial_y^{-2} \, {\cal S}^{Ai} 
$$
and is the convolution of the source with the Green function $G^{Ai \, \partial_y^2}$.
Let us now detail the smoothness of solutions to $Airy \circ \partial_y^2$.

 \begin{lemma} \label{solutionAiry2}
 Let $\sigma > 0$. Assume that $| y_c \log \gamma  | \lesssim 1$. 
 Let $f$ be an holomorphic function on $\Gamma_{\beta_0,\beta_1}$. 
 Then there exists an holomorphic function $\phi$, defined on $\Gamma_{\beta_0,\beta_1}$,
 such that
 \beq \label{eqq}
 Airy \, \partial_y^2 \phi = f,
 \eeq
 and such that
 \beq \label{bo1}
 | \phi(y) | \lesssim   e^{-\sigma y} \| f \|_{X_\sigma},
 \eeq
 \beq \label{bo2}
 | \partial_y \phi(y) | \lesssim | \log \gamma|  \,  e^{-\sigma y} \| f \|_{X_\sigma},
 \eeq
 \beq \label{bo3}
 | \partial_y^2 \phi(y) | \lesssim | \gamma |  e^{-\sigma y} \| f \|_{X_\sigma}
 \eeq
 \beq \label{bo3}
 | \partial_y^3 \phi(y) | \lesssim | \gamma |^2 e^{-\sigma y} \| f \|_{X_\sigma}
 \eeq
   and
\beq \label{bo5}
 | \partial_y^4 \phi(y) | \lesssim | \gamma |^3 e^{-\sigma y}  \| f \|_{X_\sigma}.
\eeq
 \end{lemma}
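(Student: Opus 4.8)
The plan is to reduce everything to Lemma~\ref{solutionAiry} for the $Airy$ equation and then integrate twice. I would set $\psi={\cal S}^{Ai}f$, so that $\psi$ is holomorphic on $\Gamma_{\beta_0,\beta_1}$, solves $Airy\,\psi=f$, and obeys (\ref{bbb1})--(\ref{bbb3}), i.e. $|\psi(y)|\lesssim\zeta(y)e^{-\sigma y}\|f\|_{X_\sigma}$, $|\partial_y\psi(y)|\lesssim|\gamma|^2 e^{-\sigma y}\|f\|_{X_\sigma}$ and $|\partial_y^2\psi(y)|\lesssim|\gamma|^3 e^{-\sigma y}\|f\|_{X_\sigma}$ (in the regime under consideration $|\gamma|\gg1$, so the restriction $\sigma<\gamma/10$ of Lemma~\ref{solutionAiry} is automatic for fixed $\sigma$). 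Since $\psi$ decays at $+\infty$, I then define
$$
\phi(y)=\int_y^{+\infty}(w-y)\,\psi(w)\,dw,
$$
the integral being taken along a ray of the pencil $\Gamma_{\beta_0,\beta_1}$. This is precisely ${\cal S}^{Ai\,\partial_y^2}f=\partial_y^{-2}\psi$; it is holomorphic on $\Gamma_{\beta_0,\beta_1}$, it satisfies $\partial_y\phi(y)=-\int_y^{+\infty}\psi(w)\,dw$ and $\partial_y^2\phi=\psi$, hence $Airy\,\partial_y^2\phi=Airy\,\psi=f$, which is (\ref{eqq}). As in the remark following Lemma~\ref{solutionAiry}, no boundary condition at $y=0$ is imposed, so $\phi$ is not unique; existence is all that is claimed.

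The three top--order estimates are then immediate, since $\partial_y^2\phi=\psi$, $\partial_y^3\phi=\partial_y\psi$ and $\partial_y^4\phi=\partial_y^2\psi$: the bounds (\ref{bo3}) and (\ref{bo5}) follow from (\ref{bbb1})--(\ref{bbb3}) together with $\zeta(y)\le|\gamma|$. The work left is to bound $\partial_y\phi$ and $\phi$, i.e. to show
$$
\int_y^{+\infty}\zeta(w)\,e^{-\sigma w}\,dw\lesssim|\log\gamma|\,e^{-\sigma y}
\qquad\text{and}\qquad
\int_y^{+\infty}\zeta(w)\,(w-y)\,e^{-\sigma w}\,dw\lesssim e^{-\sigma y}.
$$
For this I would split the integration range according to the three regimes built into $\zeta$ in (\ref{defizeta}): the critical--layer disc $|w-y_c|\le|\gamma|^{-1}$, the intermediate annulus $|\gamma|^{-1}\le|w-y_c|\le1$, and the far region $|w-y_c|\ge1$. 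In the far region $\zeta\le1$ and the exponential weight $e^{-\sigma w}$ closes both estimates with harmless constants. Near the critical layer one has $\int_{|w-y_c|\le1}\zeta(w)\,dw\lesssim1+|\log\gamma|$, which produces the logarithmic loss in (\ref{bo2}); for $\phi$, on the part of the range where $|w-y_c|\le1$ one uses $|w-y|\lesssim|w-y_c|+|y_c|$ (the critical layer is crossed only when $\Re y$ is small), so the relevant contribution is bounded by $\int_{|w-y_c|\le1}\zeta(w)\bigl(|w-y_c|+|y_c|\bigr)\,dw\lesssim1+|y_c\log\gamma|\lesssim1$, the last inequality being exactly the hypothesis $|y_c\log\gamma|\lesssim1$; summing the three regimes yields (\ref{bo1}).

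I expect the only genuinely delicate point to be the bound on $\phi$ itself, (\ref{bo1}): a crude estimate replacing $(w-y)$ by its supremum on the range would cost an extra factor $|\log\gamma|$, and the gain comes precisely from pairing the singular part of $\zeta$ (of size $\sim|\gamma|$ on a disc of radius $\sim|\gamma|^{-1}$) with the factor $(w-y)$, which on that disc is at most $O(|\gamma|^{-1})$ plus $O(|y_c|)$, the latter being absorbed by the assumption on $y_c$. By contrast, the $|\log\gamma|$ in (\ref{bo2}) is intrinsic. Holomorphy on the pencil, the decay along it, and the intermediate derivative bounds (no interpolation is needed, they come directly from $\partial_y^k\phi=\partial_y^{k-2}\psi$) are routine, and one may equivalently carry out the whole estimate by convolving $f$ with the Green function $G^{Ai\,\partial_y^2}$ and invoking its bound (\ref{boundGAi2}).
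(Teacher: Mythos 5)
Your proof is correct and follows essentially the same route as the paper: set $\partial_y^2\phi=\psi={\cal S}^{Ai}f$, read the top three bounds directly from Lemma~\ref{solutionAiry}, and obtain (\ref{bo2}) and (\ref{bo1}) by integrating the pointwise bound $|\psi|\lesssim\zeta\,e^{-\sigma y}\|f\|_{X_\sigma}$ once and twice, the hypothesis $|y_c\log\gamma|\lesssim1$ absorbing the logarithmic loss in the second integration. The paper phrases this via the primitives $\zeta_1,\zeta_2$ of $\tilde\zeta$, but the content is identical, and your observation that the factor $(w-y)$ pairs against the peak of $\zeta$ (with the $|y_c|$ residue handled by the hypothesis, legitimate since $y\ge0$ forces $|w-y|\le|w-y_c|+|y_c|$) is exactly the mechanism at work.
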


\begin{proof}
The estimates on $\partial_y^2 \phi$, $\partial_y^3 \phi$ and $\partial_y^4 \phi$ are direct consequences of Lemma \ref{solutionAiry}.
Now, let
$$
\zeta_1(y) = \int_y^{1/2} \tilde \zeta(z) \, dy.
$$
For $\Re y > \Re y_c + \gamma^{-1}$, we have $\zeta_1(y) \lesssim \log(y - y_c)$, and for $y < \Re y_c + \gamma_1$, we have
$\zeta_1(y) \lesssim \log \gamma$, which, using (\ref{bbb1}), gives the bound on $\partial_y \phi$.
Let now $\zeta_2$ be the second primitive of $\zeta_1$ which vanishes at $1/2$.
 Then $\zeta_2(y) \lesssim  1 +  |y - y_c| | \log(y - y_c)| $ 
for $\Re y > \Re y_c + \gamma$
and else $\zeta_2(y) \lesssim 1 + | \gamma || \log \gamma |  + | y_c| | \log \gamma |$, which ends the proof.
\end{proof}

 
 \subsection{A first inversion of $OS_{\alpha,c,\nu}$}
 
 
 We recall that we do not yet take into account the boundary conditions at $0$.
 Without its boundary conditions at $0$, $OS_{\alpha,c,\nu}$ is simply a fourth order differential equation and can thus always be
 "solved". The question of the spectrum only appears when we add the boundary conditions at $0$.
 
 For functions which are rapidly decaying in $y$, $OS_{\alpha,c,\nu}$ may be seen as a first order perturbation of $Airy \circ \partial_y^2$, which
 allows to solve $OS_{\alpha,c,\nu}$ in rapidly decreasing functions spaces. 
 
 \begin{proposition} \label{firstresolution}
 Assume that 
 \beq \label{hypoc}
 | y_c | \log |\gamma| \ll 1 .
 \eeq
 Then, provided $\sigma$ is large enough, for every $f \in X_\sigma$, there exists a solution $\phi \in X_\sigma$ to 
 $$
 OS_{\alpha,c,\nu} \phi = f
 $$
such that
\beq \label{boundOSS}
\| \phi \|_{X_\sigma} + {\| \partial_y \phi \|_{X_\sigma}  \over | \log \gamma |}  
+ { \| \partial_y^2  \phi \|_{X_\sigma}   \over | \gamma |}
+ {   \| \partial_y^3 \phi \|_{X_\sigma}  \over | \gamma |^2} 
+ { \| \partial_y^4 \phi \|_X  \over | \gamma^3|}   \lesssim \| f \|_{X_\sigma} .
\eeq
 \end{proposition}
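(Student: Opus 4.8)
The plan is to view $OS_{\alpha,c,\nu}$ as the perturbation $OS_{\alpha,c,\nu} = Airy\circ\partial_y^2 - {\cal A}_0$ given in (\ref{introAiry}), where ${\cal A}_0 = U_s'' + \eps\alpha^4$ is a zeroth order multiplication operator, and to solve the equation by a fixed point argument based on the solver ${\cal S}^{Ai\,\partial_y^2}$ constructed in the previous subsection. Concretely, I would set up the iteration
\beq \label{proposaliter}
\phi_0 = {\cal S}^{Ai\,\partial_y^2} f, \qquad \phi_{n+1} = \phi_0 + {\cal S}^{Ai\,\partial_y^2}\bigl( {\cal A}_0 \phi_n \bigr),
\eeq
so that a fixed point $\phi = \sum_n ({\cal S}^{Ai\,\partial_y^2}\circ {\cal A}_0)^n \phi_0$ solves $Airy\circ\partial_y^2\,\phi - {\cal A}_0\phi = f$, i.e. $OS_{\alpha,c,\nu}\phi = f$. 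The whole point of working in $X_\sigma$ with $\sigma$ large is that ${\cal A}_0$ is a \emph{bounded} operator on $X_\sigma$: $U_s''$ decays like $e^{-\beta y}$ and is $O(1)$, while $\eps\alpha^4$ is tiny, so $\| {\cal A}_0 \phi \|_{X_\sigma} \lesssim \| \phi \|_{X_\sigma}$ with a constant that does not blow up. By Lemma \ref{solutionAiry2} applied with this $\sigma$ (note the hypothesis $|y_c\log\gamma|\lesssim 1$ is exactly (\ref{hypoc})), ${\cal S}^{Ai\,\partial_y^2}$ is bounded $X_\sigma \to X_\sigma$; but boundedness alone is not enough for a contraction, so the gain must come from somewhere.

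The gain is the exponential decay. Because $U_s''$ carries a factor $e^{-\beta\Re y}$, each application of ${\cal A}_0$ improves the decay by a full exponential factor relative to the loss incurred by ${\cal S}^{Ai\,\partial_y^2}$; equivalently, if one writes $X_\sigma$ for $\sigma$ large compared to $\beta$, then ${\cal S}^{Ai\,\partial_y^2}\circ{\cal A}_0$ maps $X_\sigma$ into itself with a small operator norm. More carefully, I would track the norm $\| {\cal S}^{Ai\,\partial_y^2}({\cal A}_0 \phi) \|_{X_\sigma}$ using the pointwise bound (\ref{boundGAi2}) on $G^{Ai\,\partial_y^2}$: when we convolve against $U_s''(x)\phi(x)$, which lives essentially on $\Re x \lesssim 1$ and is $O(e^{-\sigma x}\|\phi\|_{X_\sigma})$, the kernel bound gives a contribution whose size is governed by $\gamma^{-3/2}$ times lower-order factors in the region $y>x$ and by the localized $x$-integral in the region $y<x$. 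The net effect is a contraction constant that is small as $\nu \to 0$ (a power of $\gamma^{-1}$, or at worst a constant $< 1$ uniformly), provided $\sigma$ is chosen large enough and $\nu$ small enough. This is the routine but slightly delicate bookkeeping step; I would not grind through it here, only note that the three separate pieces of (\ref{boundGAi2}) must each be checked.

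Once the iteration converges in $X_\sigma$, the derivative bounds in (\ref{boundOSS}) follow immediately by applying the last four estimates of Lemma \ref{solutionAiry2} to $\phi_0 = {\cal S}^{Ai\,\partial_y^2} f$ and then to each correction ${\cal S}^{Ai\,\partial_y^2}({\cal A}_0\phi_n)$, summing the geometric series; the weights $1$, $|\log\gamma|$, $|\gamma|$, $|\gamma|^2$, $|\gamma|^3$ on $\phi$, $\partial_y\phi$, $\partial_y^2\phi$, $\partial_y^3\phi$, $\partial_y^4\phi$ are exactly the ones produced by that Lemma and are stable under the iteration because ${\cal A}_0$ does not involve derivatives. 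Holomorphy on $\Gamma_{\beta_0,\beta_1}$ is preserved at each step since ${\cal S}^{Ai\,\partial_y^2}$ and multiplication by the holomorphic function ${\cal A}_0$ both preserve it, and a locally uniform limit of holomorphic functions is holomorphic. The main obstacle I anticipate is precisely the contraction estimate for ${\cal S}^{Ai\,\partial_y^2}\circ {\cal A}_0$ on $X_\sigma$: one has to exploit the decay of $U_s''$ against the very mild growth in the kernel of $G^{Ai\,\partial_y^2}$ for $y<x$ (the linear-in-$(x-y)$ term), and make sure the choice of $\sigma$ — large, but still constrained by $\sigma < \gamma/10$ as in Lemma \ref{solutionAiry} — is compatible; this is where the hypothesis that $\sigma$ be "large enough" and the smallness of $\nu$ are genuinely used.
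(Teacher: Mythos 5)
Your overall architecture matches the paper's: decompose $OS_{\alpha,c,\nu} = Airy\circ\partial_y^2 - {\cal A}_0$ as in (\ref{introAiry}), solve by a fixed-point iteration built on the solver ${\cal S}^{Ai\,\partial_y^2}$ of Lemma \ref{solutionAiry2}, and recover the derivative bounds from that lemma afterwards. The paper does exactly this via the abstract Proposition \ref{prophypo2} (assumption (A2) with weight $\theta(x)=e^{-\sigma x}$), where the error kernel is $R(x,y) = {\cal A}_0(y)\,G^{Ai\,\partial_y^2}(x,y)$; since ${\cal A}_0$ is bounded and a pure multiplication, writing ${\cal A}_0$ at $x$ or at $y$ gives the same operator norm, so your version of the iteration is equivalent. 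Your closing remark — apply Lemma \ref{solutionAiry2} and sum a geometric series, or (as the paper does, more cleanly) observe that $Airy\,\partial_y^2\phi = f + {\cal A}_0\phi \in X_\sigma$ once $\phi\in X_\sigma$ is known and apply the lemma once — is fine either way.

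Where you go wrong is the \emph{mechanism} for the contraction, and this is precisely the step you declined to grind through. You attribute the smallness of $\| {\cal S}^{Ai\,\partial_y^2}\circ{\cal A}_0\|$ to the exponential decay $|U_s''(x)|\lesssim e^{-\beta x}$, and you expect the gain to show up through the $\gamma^{-3/2}$ prefactors in the second and third terms of (\ref{boundGAi2}). Neither is right. The term in (\ref{boundGAi2}) without any $\gamma^{-3/2}$ prefactor, namely $\zeta(x)(x-y)\,\mathbf{1}_{y<x}$, is the dominant one and is exactly what the paper estimates; nothing about the decay of $U_s''$ helps there, because the contribution is concentrated near $x\sim y_c$, where $\zeta(x)\sim|\gamma|$ and $U_s''$ is $O(1)$, not small. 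Once you compute $\int_y^{+\infty}e^{-\sigma x}\zeta(x)(x-y)\,dx$, you find three pieces: the tail $x\gtrsim 1$ gives $O(\sigma^{-1})e^{-\sigma y}$ (this is what "$\sigma$ large enough" buys, subject to $\sigma<\gamma/10$); the window $|x-y_c|\lesssim|\gamma|^{-1}$ gives $O(y_c+|\gamma|^{-1})e^{-\sigma y}$; and the region $|\gamma|^{-1}\lesssim|x-y_c|\lesssim 1$, after splitting $(x-y)\le|x-y_c|+|y_c-y|$, produces $O(\sigma^{-1})e^{-\sigma y}$ plus the genuinely dangerous piece $O(|y_c|\log|\gamma|)e^{-\sigma y}$. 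The hypothesis (\ref{hypoc}) is what kills that last piece — it is \emph{the} ingredient making the contraction constant $<1$, not a side condition inherited from Lemma \ref{solutionAiry2}. If you argued as you propose, relying on the decay of $U_s''$ and the $\gamma^{-3/2}$ factors, you would not obtain a contraction at all without the $|y_c|\log|\gamma|$ hypothesis, and you would not see why (\ref{hypoc}) appears in the statement. So: right scaffolding, right references, but the heart of the estimate — the critical-layer term $\zeta(x)(x-y)$ and the role of (\ref{hypoc}) — is misidentified.
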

 
 \Remark
 Note that the solution is not unique since we have not prescribed the boundary conditions at $0$.
 Condition (\ref{hypoc}) will be checked later.
 
 \begin{proof}
 
 We apply the second method described in Appendix \ref{appendix2}.
 In this case we have
 $$
 R(x,y) = \Bigl[ {\cal A}_0 \partial_y^{-2} {\cal S}^{Ai} \delta_x \Bigr](y) = {\cal A}_0(y) G^{Ai  \, \partial_y^2}(x,y).
 $$
 We recall that ${\cal A}_0(x)$ is bounded.
 Moreover, using (\ref{boundGAi2}), $G^{Ai \, \partial_y^2}$ is bounded by the sum of three terms. 
 We only focus on the first one, the other being smaller.
 We are thus lead to boud
 $$
 I = \int_y^{+\infty}  e^{-\sigma x} \zeta(x) (x-y)  \, dx.
$$
 The integral between $1$ and $+\infty$ is bounded by $C \sigma^{-1} e^{-\sigma y}$ since in this case $\zeta(x) = 1$.
  It remains to bound
 $$
 I_1 = \int_y^1 x e^{-\sigma x} \min(|x -y_c|^{-1}, \gamma)  \, dx .
 $$
We split $I_1$ in $I_2 + I_3$, where $I_2$ is the integral over $J = [y_c - \gamma^{-1},y_c+\gamma^{-1}] \cap [y,1]$, and
$I_3$ is the integral on its complementary $J^c = [y,1] - J$. 
First $I_2 = 0$ except if $y < y_c + \gamma^{-1}$. In this case
$$
| I_2 | \le 2 \gamma^{-1}   (y_c + \gamma^{-1})  e^{- \sigma y} \gamma
\le 2 (y_c + \gamma^{-1}) e^{- \sigma y}.
$$
Second, on $J^c$, $\zeta(x) = \max(|x - y_c|^{-1},1)$, thus we have
$$
| I_3 | \le \int_{J^c} e^{- \sigma x} \, dx + \int_{J^c} {| y_c | \over | x - y_c|} e^{- \sigma x} \, dx.  
$$
The first integral is bounded by $\sigma^{-1} e^{-\sigma y}$ and the second one  is bounded by 
$2 | y_c  | e^{-\sigma y} \log | \gamma^{-1}|$.

Thus assumption (\ref{hypo2}) of Proposition \ref{prophypo2} is thus satisfied provided $\sigma$ is large enough, which leads to the existence of $\phi$, with
$\| \phi \|_{X_\sigma} \lesssim \| f \|_{X_\sigma}$. Now
$$
Airy  \, \partial_y^2 \phi = f + {\cal A}_0 f.
$$
In particular, 
$$
\| Airy \, \partial_y^2 \phi \|_{X_\sigma} \lesssim \| f \|_{X_\sigma},
$$
and Lemma \ref{solutionAiry2} ends the proof.
\end{proof}


\subsection{Construction of "fast" solutions}


The aim of this section is to construct the "fast" modes $\phi_{f,\pm}$.  Let $Ai(x,1)$ be the first primitive of $Ai$ and $Ai(x,2)$ be the primitive
of $Ai(x,1)$.

\begin{proposition} \label{constructfast}
There exist two solutions $\phi_{f,\pm}$ of Orr Sommerfeld equations such that
$\phi_{f,-}$ converges exponentially fast to $0$ and $\phi_{f,+}$ diverges exponentially fast as $y \to + \infty$.
Moreover,
\beq \label{phimoins1}
\phi_{f,-}(0) = Ai( - \gamma y_c, 2) + O(\gamma^{-2}),
\eeq
\beq \label{phimoins2}
\partial_y \phi_{f,-}(0) =  \gamma Ai( - \gamma y_c, 1)+ O(\gamma^{-1}).
\eeq
In particular,
\beq \label{ddis}
{\partial_y \phi_{f,-}(0) \over \phi_{f,-}(0)} =  { \gamma Ai( - \gamma y_c, 1) \over Ai( - \gamma y_c, 2) } + O(\gamma^{-2}).
\eeq
\end{proposition}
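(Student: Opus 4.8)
The plan is to build the two fast solutions $\phi_{f,\pm}$ by perturbing the solutions $\phi^{Ai,2}_\pm$ of $Airy\circ\partial_y^2=0$ constructed in the previous subsection, using the decomposition (\ref{introAiry}), namely $Orr_{\alpha,c,\nu} = {\cal A} - {\cal A}_0$ with ${\cal A} = Airy\circ\partial_y^2$ and ${\cal A}_0 = U_s'' + \eps\alpha^4$ a bounded multiplication operator. Writing $\phi_{f,-}$ as a series $\sum_n \phi_n$ with $\phi_0 = \phi^{Ai,2}_-$ and $\phi_{n+1} = {\cal S}^{Ai\,\partial_y^2}({\cal A}_0 \phi_n)$ (the solver of $Airy\circ\partial_y^2$ introduced above), the residual at each stage is $Airy\,\partial_y^2 \phi_n = {\cal A}_0\phi_n$, so $\phi_{n+1}$ corrects it. Since $\phi^{Ai,2}_-$ decays at $+\infty$ like $\gamma^{-5/2}Ai(\gamma(g(y)-y_c),2)$, the relevant exponential weight is governed by $\theta(y)$, and one checks using the bound (\ref{boundGAi2}) on $G^{Ai\,\partial_y^2}$ together with the iteration machinery of Appendix \ref{appendix2} that ${\cal A}_0 \circ {\cal S}^{Ai\,\partial_y^2}$ is a contraction, with contraction factor a negative power of $|\gamma|$; this is exactly where assumption (\ref{hypoc}) (i.e. $|y_c|\log|\gamma|\ll 1$) gets used, just as in Proposition \ref{firstresolution}. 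The series converges to a genuine solution $\phi_{f,-}$ of $Orr_{\alpha,c,\nu}\phi=0$ decaying exponentially at $+\infty$, and $\phi_{f,+}$ is built the same way starting from $\phi^{Ai,2}_+$, giving the divergent one.

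Next I would extract the boundary values at $y=0$. By the Remark following the construction of $\phi^{Ai,2}_\pm$, $\phi^{Ai,2}_-(0)$ is $O(1)$, and in fact from (\ref{exphiAi2}) with $g(0)=y_c + (\text{something})$ — more precisely $g(0)-y_c$ equals $-y_c$ up to the Langer normalization $g'(y_c)=1$, so the leading term of $\phi^{Ai,2}_-(0)$ is $g'(0)^{-5/2}Ai(-\gamma y_c,2)$, and since $g'(0) = 1 + O(y_c)$ and the correction from the series is $O(\gamma^{-2})$, one gets (\ref{phimoins1}): $\phi_{f,-}(0) = Ai(-\gamma y_c,2) + O(\gamma^{-2})$. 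Differentiating once, $\partial_y\phi^{Ai,2}_- = \phi^{Ai,1}_-$, whose leading term at $0$ is $\gamma\, g'(0)^{-3/2}Ai(\gamma(g(0)-y_c),1) = \gamma Ai(-\gamma y_c,1)(1+O(\gamma^{-2}))$, and the perturbation series contributes $O(\gamma^{-1})$ to $\partial_y\phi_{f,-}(0)$ after accounting for the extra factor of $\gamma$ produced by differentiation (one derivative costs a power of $\gamma$, as quantified in Lemma \ref{solutionAiry2}); this gives (\ref{phimoins2}). Formula (\ref{ddis}) is then immediate by dividing (\ref{phimoins2}) by (\ref{phimoins1}), provided $Ai(-\gamma y_c,2)$ is not too small, which holds in the relevant parameter range because $\arg\gamma\in(0,\pi/3)$ keeps $-\gamma y_c$ away from the region where $Ai(\cdot,2)$ vanishes.

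The main obstacle I expect is the bookkeeping of the error terms through the iteration: one must track simultaneously the exponential weight (which is $\theta$, not a fixed $\sigma$, since $\theta$ is the natural decay rate of the fast modes) and the powers of $|\gamma|$ gained or lost at each application of ${\cal S}^{Ai\,\partial_y^2}$ and at each differentiation, and show that the net gain per iteration is a strictly negative power of $|\gamma|$ so the series converges with the claimed $O(\gamma^{-2})$ and $O(\gamma^{-1})$ remainders at $y=0$. The use of the bound (\ref{boundGAi2}) is delicate because it has three regimes (the linear-in-$(x-y)$ piece for $y<x$, the $\tilde\zeta$ piece, and the exponentially decaying piece for $y>x$), and one must verify that after multiplication by the bounded ${\cal A}_0$ and integration, each regime still produces a gain; the worst term is the linearly growing one for $y<x$, and this is precisely where $|y_c|\log|\gamma|\ll 1$ is needed to absorb a logarithmic loss, mirroring the estimate of $I_1$, $I_2$, $I_3$ in the proof of Proposition \ref{firstresolution}. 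Once that estimate is in hand, everything else is routine: convergence of the Neumann series, holomorphy on $\Gamma_{\beta_0,\beta_1}$ inherited from each $\phi_n$, and evaluation of leading terms at $0$ using the explicit Langer data $g(0)$, $g'(0)$.
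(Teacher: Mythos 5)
Your overall strategy matches the paper's: both perturb a fast Airy solution using the decomposition $Orr_{\alpha,c,\nu} = {\cal A} - {\cal A}_0$ and the fact that ${\cal A}_0$ is a bounded multiplication operator, with the iteration machinery of Appendix \ref{appendix2} behind the scenes. The paper, however, does this in a single line: it takes $\phi_0(y) = Ai(\gamma(y-y_c),2)$, notes that $f := Orr_{\alpha,c,\nu}(\phi_0)$ is rapidly decreasing, invokes Proposition \ref{firstresolution} to solve $Orr_{\alpha,c,\nu}\phi = f$, and sets $\phi_{f,-} = \phi_0 + \phi$. Your proposal essentially re-derives Proposition \ref{firstresolution} from scratch via the iterate $\phi_{n+1} = {\cal S}^{Ai\,\partial_y^2}({\cal A}_0\phi_n)$; that contraction has already been verified there, including the use of (\ref{boundGAi2}), the handling of the linearly growing regime of $G^{Ai\,\partial_y^2}$, and the role of the hypothesis $|y_c|\log|\gamma|\ll 1$. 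So the extra work you anticipate as "the main obstacle" is exactly the content of a lemma you should simply cite.

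There is also a genuine gap in your evaluation at $y=0$. You start from the Langer-transformed solution $\phi^{Ai,2}_-$, whose value at $0$ is, by (\ref{exphiAi2}), $g'(0)^{-5/2}\,Ai\bigl(\gamma(g(0)-y_c),2\bigr)\bigl[1+O(\gamma^{-2})\bigr]$. Neither $g'(0)^{-5/2}$ nor the discrepancy between $g(0)-y_c$ and $-y_c$ is $1+O(\gamma^{-2})$: both deviations are $O(y_c)$, and in the regime of interest (lower marginal branch, $\alpha\sim\nu^{1/4}$) one has $y_c\sim\gamma^{-1}\gg\gamma^{-2}$. Since $Ai(-\gamma y_c,2)=O(1)$ there, the Langer prefactor alone introduces an error of order $\gamma^{-1}$, not $\gamma^{-2}$, so your claimed (\ref{phimoins1}) does not follow from the computation as written. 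The paper avoids this entirely by choosing $\phi_0(y)=Ai(\gamma(y-y_c),2)$ \emph{without} the Langer correction, so that $\phi_0(0)=Ai(-\gamma y_c,2)$ is exact and the full error is confined to the correction term $\phi$ produced by Proposition \ref{firstresolution}. If you insist on starting from $\phi^{Ai,2}_-$ you must either renormalize it so its value and derivative at $0$ coincide with the raw Airy primitives up to the admissible error, or track the Langer factors explicitly in (\ref{phimoins1})--(\ref{phimoins2}) — simply quoting $g'(0)=1+O(y_c)$ is not enough.
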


\begin{proof}
  Let $\phi_0(y) = Ai(\gamma (y - y_c),2)$ and let $f = OS_{\alpha,c,\nu}(\phi_0) = - {\cal A}_0 \phi_0$, which is rapidly decreasing.
 Using Proposition \ref{firstresolution} we then solve $OS_{\alpha,c,\nu} \phi = f$ and set $\phi_{f,-} = \phi_0 + \phi$.
\end{proof}


\subsection{Remarks}


Let us make various remarks on Airy equations. First the adjoint of ${\cal A}$ is simply
\beq \label{adjointA}
{\cal A}^t = \partial_y^2 \circ Airy,
\eeq
thus two independent solutions of ${\cal A}^t$ can be obtained simply by differentiating twice $\phi^{Ai}_\pm$.

Next, for large $|c|$, $Airy$ equation is a perturbation of $- \partial_y^2 \psi - c \psi$ whose solutions are explicit. 
It is thus possible to write full expansions of $\phi^{Ai}_\pm$ as $c$ is large.

Note that up to now we have solved the various Airy equations with source terms in $\Gamma_{\beta_0,\beta_1}$. 
Using the Green function it is possible to solve the same equations with source terms in $\Gamma_{\beta_0,\beta_1}(y_c)$.
For instance the solution of $Airy(\phi) = \psi$ may be defined by
$$
\phi(y) = \int_\Gamma G^{Ai}(x,y) \psi(x) \, dx
$$
where $\Gamma$ is a path in $\Gamma_{\beta_0,\beta_1}(y_c)$ passing through $y$.


\section{Construction of approximate slow solutions}


We now construct approximate "slow" solutions of the Orr Sommerfeld equations, starting from the  solutions of Rayleigh equations constructed
in Theorem \ref{Rayexp}.

Let us first discuss the smoothness of $\phi_{s,-}$, the smoothness of $\phi_{s,+}$ being similar.
We first recall that the Rayleigh solutions $\psi_{\alpha,\pm}$ have a singularity at $y_c$, of the form $(y - y_c) \log(y - y_c) Q(y- y_c)$
where $Q$ is an holomorphic function near $y_c$.
However, $\phi_{s,-}$ is the solution to 
\beq \label{smooo}
- \eps \partial_y^4 \phi_{s,-} + Ray_{\alpha,c}(\phi_{s,-}) = 0
\eeq
where $Ray_{\alpha,c}$ is a second order differential operator. Thus $\phi_{s,-}$ is a solution of a regular fourth order differential equation, depending
in an holomorphic way on several parameters, namely $\alpha$ and $\nu$. 
As a consequence $\phi_{s,-}$ is an holomorphic function of $y$, $\alpha$ and $\nu$, and  has in particular {\it no} singularity at $y_c$.
Of course the bounds obtained by using (\ref{smooo}) are very poor and diverge as $\nu$ goes to $0$.

The smoothness of $\phi_{s,\pm}$ is a crucial point since it allows the migration of Dunford's 
 integration contours into the area $\Im c < 0$, which is not possible for Rayleigh equation where
we are limited by logarithmic branches. As a consequence, the decay for Euler solutions is only polynomial, in $\langle \alpha t \rangle^{-1}$,
whereas it is exponential, though with a slow rate, for Navier Stokes solutions.

\medskip

We split the construction of $\phi_{s,\pm}$ in two steps, since it seems delicate to remove all the singularities $(y - y_c)^n \log(y - y_c)$ appearing
in $\psi_{\alpha,\pm}$ at the same time. We thus first eliminate the largest ones, namely $(y-y_c)^n \log(y - y_c)$ for $n = 1$ to $4$,
which leads to the approximate solutions $\phi_{s,\pm}^{app}$. We later further eliminate the singularities through the iteration of the Green function.
Thus, we first prove:

\begin{proposition}\label{lem-exactphija} 
Let $N$ be arbitrarily large.
Assume that $ | y_c | \log |\gamma | \to 0$ as $\nu \to 0$. Then, for $\nu$ small enough,
there exist two independent approximate solutions $\phi_{s,\pm}^{app} \in \Gamma_{\beta_0,\beta_1}(y_c)$ 
to the Orr Sommerfeld equation with unit Wronskian,
such that $\phi_{s,-}^{app}(y) \to 0$  and $|\phi_{s,+}^{app}(y)| \to + \infty$ as $y \to + \infty$, and they satisfy
$$
Orr_{\alpha,c,\nu}(\phi_{s,\pm}^{app}) = O(\eps^N e^{\pm |\alpha| y}).
$$
Furthermore, we have the following expansion 
\beq \label{derivphis}
{\partial_y \phi_{s,-}^{app}(0) \over \phi_{s,-}^{app}(0)} = {\partial_y \psi_{-,\alpha}(0) \over \psi_{-,\alpha}(0)} + O(\eps)
\eeq
where ${\partial_y \psi_{-,\alpha}(0) / \psi_{-,\alpha}(0)}$ is given by (\ref{psippsi}).
Moreover,
$$
\| e^{- \alpha y} \phi_{s,+}^{app} \|_{L^\infty} + {\| e^{- \alpha y}  \partial_y \phi_{s,+}^{app} \|_{L^\infty}  \over | \log \gamma|} 
+ {\|  e^{- \alpha y} \partial_y^2 \phi_{s,+}^{app} \|_{L^\infty}  \over | \gamma |} 
+ {\|  e^{- \alpha y}  \partial_y^3 \phi_{s,+}^{app} \|_{L^\infty}  \over | \gamma|^2} \lesssim 1
$$
and
$$
\| e^{ \alpha y} \phi_{s,-}^{app} \|_{L^\infty} + {1 \over | c | \, | \log \gamma|} \| e^{ \alpha y}  \partial_y \phi_{s,-}^{app} \|_{L^\infty}   
$$
$$
+ { 1  \over | c | \,  | \gamma |} \|  e^{ \alpha y} \partial_y^2 \phi_{s,-}^{app} \|_{L^\infty}  
+ {1   \over | c | \,  | \gamma|^2}  \|  e^{ \alpha y}  \partial_y^3 \phi_{s,-}^{app} \|_{L^\infty} \lesssim 1 .
$$
 \end{proposition}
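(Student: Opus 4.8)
The plan is to build $\phi_{s,\pm}^{app}$ from the Rayleigh solutions $\psi_{\pm,\alpha}$ of Theorem \ref{Rayexp}, using the decomposition $Orr_{\alpha,c,\nu}=Ray_{\alpha,c}-\eps\,\partial_y^4$ of (\ref{introDiff}). Since $Ray_{\alpha,c}(\psi_{\pm,\alpha})=0$ we have $Orr_{\alpha,c,\nu}(\psi_{\pm,\alpha})=-\eps\,\partial_y^4\psi_{\pm,\alpha}$, a residual that is of size $O(\eps\,e^{\pm|\alpha|y})$ away from $y_c$ but very singular near the critical layer: differentiating the part $(y-y_c)\log(y-y_c)\,Q_{\pm,\alpha}$ of $\psi_{\pm,\alpha}$ four times produces the poles $(y-y_c)^{-3}$, $(y-y_c)^{-2}$, $(y-y_c)^{-1}$ and a bare $\log(y-y_c)$, coming precisely from the monomials $(y-y_c)^n\log(y-y_c)$ with $n=1,\dots,4$. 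These lie outside the spaces $X_\sigma$ and $Y_{\rho,\sigma}$ on which the solvers of Propositions \ref{firstresolution} and \ref{propXrho} operate and --- this is the crux --- because $\eps\,\gamma^3$ is of order $1$ they are genuinely not small at the critical-layer scale $|y-y_c|\sim|\gamma|^{-1}$, so a crude Neumann iteration would regenerate them at every step and destroy the gain in $\eps$. This forces the two-step construction announced before the statement.

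In the first step I would produce $\phi_{s,\pm}^{app}$ by modifying $\psi_{\pm,\alpha}$ near $y_c$, replacing its four leading logarithmic monomials by explicit controlled terms obtained from the same Frobenius-type analysis of the fourth-order operator $Orr_{\alpha,c,\nu}$ as in the proofs of Theorem \ref{Rayexp} and Proposition \ref{solutionRayleigh}, so that the residual $Orr_{\alpha,c,\nu}(\phi_{s,\pm}^{app})$ no longer contains any of the poles $(y-y_c)^{-k}$ nor a bare logarithm, hence belongs to the admissible class and is of size $O(\eps\,e^{\pm|\alpha|y})$ there; the convergence of the local series for $\rho$ small is the Banach-algebra estimate of Proposition \ref{solutionRayleigh}, and the smallness of the critical-layer terms uses $Q_{-,\alpha}=O(c)$ from (\ref{sizesingularity}), which is the origin of the extra factor $|c|$ in the estimates for $\phi_{s,-}^{app}$. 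One then extends this modification to all of $\Gamma_{\beta_0,\beta_1}(y_c)$, with the prescribed behaviour $e^{\pm|\alpha|y}$ at infinity, by gluing it to $\psi_{\pm,\alpha}$ through the Rayleigh Green function $G^{int}_{\alpha,c}$ exactly as in the proof of Proposition \ref{propXrho}; the hypothesis $|y_c|\log|\gamma|\to 0$ (cf. (\ref{hypoc})) controls the gluing.

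In the second step, starting from $w_0=\phi_{s,\pm}^{app}$ whose residual is $O(\eps)$ in the admissible class, I would iterate: at each stage one inverts the current residual approximately --- with the Rayleigh solver of Proposition \ref{propXrho} for its slowly decaying part and the first inversion of $OS_{\alpha,c,\nu}$ of Proposition \ref{firstresolution} for its rapidly decaying part --- obtaining a correction $w_k$, then re-applies the local modification of step one to $w_k$ so that the new residual stays in the admissible class with its logarithmic singularity pushed to ever higher order $(y-y_c)^{4k+1}\log(y-y_c)$. Each stage gains a factor $\eps$, the gain being genuine because after the modification $w_k$ no longer generates a pole of relative size $O(1)$ at the critical-layer scale; summing the geometrically convergent series $\sum_k\eps^k w_k$ (for $\rho$ and $\beta_1$ small, keeping $\sigma$ above $\alpha$) and truncating after $N$ terms yields $Orr_{\alpha,c,\nu}(\phi_{s,\pm}^{app})=O(\eps^N e^{\pm|\alpha|y})$, an $L^\infty$-type bound for which the high-order logarithmic singularity of the remaining residual is harmless. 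The main obstacle throughout is precisely this interplay between the critical-layer singularity and the non-perturbative size $\eps\,\gamma^3\sim1$ of $\eps\,\partial_y^4$: one must follow the exact form of the singularity at every stage, not merely its size.

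It then remains to read off the stated consequences. The two solutions $\phi_{s,\pm}^{app}$ are independent since to leading order they coincide with $\psi_{\pm,\alpha}$, whose Wronskian equals $1$; the $2\times2$ Wronskian of the slow pair is not conserved by the fourth-order operator but equals $1+O(\eps)$, and a rescaling of one of them normalizes it to $1$. The expansion (\ref{derivphis}) follows because each correction $w_k$ with $k\ge1$, together with its first derivative, is $O(\eps)$ in $X_\sigma$ --- the losses $|\log\gamma|$, $|\gamma|$, $|\gamma|^2$ affect only the second and higher derivatives and so do not enter the logarithmic derivative at $0$ --- so that $\phi_{s,-}^{app}(0)=\psi_{-,\alpha}(0)\,(1+O(\eps))$ and likewise for $\partial_y\phi_{s,-}^{app}(0)$, and one invokes (\ref{psippsi}). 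Finally the weighted $L^\infty$ bounds on $\phi_{s,\pm}^{app}$ and their first three derivatives are obtained term by term from Proposition \ref{propXrho} together with the fact that differentiating the $(y-y_c)\log(y-y_c)$-type part and evaluating it at $|y-y_c|\sim|\gamma|^{-1}$ turns $\log(y-y_c)$, $(y-y_c)^{-1}$, $(y-y_c)^{-2}$ into the factors $|\log\gamma|$, $|\gamma|$, $|\gamma|^2$ of the statement, the extra $|c|$ for $\phi_{s,-}^{app}$ again reflecting $Q_{-,\alpha}=O(c)$.
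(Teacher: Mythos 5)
Your proposal captures the essential strategy of the paper's proof: write $Orr_{\alpha,c,\nu}=Ray_{\alpha,c}-\eps\,\partial_y^4$, observe that the residual $-\eps\,\partial_y^4\psi_{\pm,\alpha}$ has poles $(y-y_c)^{-k}$, $k=1,2,3$, and a bare $\log$ from the four leading $(y-y_c)^n\log(y-y_c)$ monomials, cancel those explicitly, handle the remainder with the two solvers (Proposition~\ref{propXrho} for the slowly decaying part, Proposition~\ref{firstresolution} for the rapidly decaying part near $y_c$), iterate, and use $Q_{-,\alpha}=O(c)$ for the extra factor $|c|$ in the $\phi_{s,-}^{app}$ estimates. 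That is the same route as the paper.

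A few details, however, are off and would need to be fixed in a rigorous write-up. First, the explicit cancellation of the poles is not a ``Frobenius-type analysis of the fourth-order operator'': the paper simply picks an explicit $\phi_{sing}=\sum_{n=0}^3 Q_n^1(y-y_c)^{n+1}\log(y-y_c)$ and chooses the four constants so that $\eps\,\partial_y^4\phi_{sing}$ cancels the singular terms of the residual; there is no indicial-equation computation for $Orr_{\alpha,c,\nu}$. Second, the paper splits the \emph{residual} $R_0$ by multiplying with a cutoff $\psi(y)=\exp(-\mu(y-y_c)^4)$ (which has a fourth-order zero at $y_c$), rather than modifying the solution near $y_c$ and gluing via the Rayleigh Green function; both can work, but they are genuinely different implementations and the one you describe would need to address the log-branch of $G^{int}_{\alpha,c}$ at $y_c$ carefully. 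Third, your claim that the iteration produces a ``geometrically convergent series'' and that the log singularity is ``pushed to ever higher order $(y-y_c)^{4k+1}\log(y-y_c)$'' does not match what happens. At each stage the corrector $\phi_1$ obtained from Proposition~\ref{propXrho} has the \emph{same} $(y-y_c)\log(y-y_c)$-structure, so the new residual $-\eps\,\partial_y^4\phi_1$ again has $(y-y_c)^{-3}$ poles --- only its \emph{amplitude} is reduced by a factor $\eps$. Moreover the paper explicitly notes that the iteration cannot be continued indefinitely because each step loses four derivatives; there is no convergent series, only a finite truncation after $N$ steps, which is of course all the statement requires.
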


\Remarks
The estimates on $\phi_{s,-}^{app}$ are slightly better than that of $\phi_{s,+}^{app}$ since $\psi_{-,\alpha}$ is less singular than
$\psi_{+,\alpha}$ in $y_c$ as stated in (\ref{sizesingularity}).

We only get approximate solutions, since our construction method involves loss of derivatives at each iteration. We will recover genuine solutions
later, using the Green function.

Note that the loss of a $|\gamma|$ factor when we differentiate is coherent with the fact that we have chosen $\beta_0$ of order $|\gamma|^{-1}$.

\begin{proof}
Let us detail the construction of $\phi_{s,-}^{app}$, the construction of $\phi_{s,+}^{app}$ being similar.
We construct $\phi_{s,-}^{app}$ by iteration, starting from $\phi_0 = \psi_{\alpha,-}$. We recall that, near the critical layer, $\phi_0$ is of the form
$$
\phi_0 = P + (y - y_c) \log(y - y_c) Q
$$
where $P$ and $Q$ are holomorphic functions, and that $\phi_0$ is holomorphic away from the critical layer. We thus note that
$$
R_0 = Orr_{\alpha,c,\nu} \psi_0= - \eps \partial_y^4 \psi_0
$$
is holomorphic for large $y$, and decaying like $e^{- \alpha y}$, but of the form 
\beq \label{R01}
R_0 =  - {2 \eps Q \over (y - y_c)^3} - {4 \eps Q' \over (y - y_c)^2} -  {6 \eps Q'' \over y - y_c} - 4 \eps \log(y - y_c) Q'''
\eeq
$$
 - \eps (y - y_c) \log(y - y_c) Q^{(4)}  - \eps Q_1 - \eps P^{(4)}
$$
for  $y$ close to $y_c$, where $Q$ and $Q_1$ are holomorphic.
We note that $R_0$ is as singular as $\eps (y - y_0)^{-3}$.

The main step of the proof is to split $R_0$ between one part which will be solved using Airy's operator thanks to Proposition \ref{firstresolution}
and another  part which will be approximately solved using Rayleigh operator thanks to Proposition \ref{propXrho}.

We introduce
$\psi$ such that $\psi$ decays exponentially fast at infinity, and such that $1 - \psi$ has a zero of order $4$ at $y_c$. For instance we choose 
$$
\psi(y) = \exp( - \mu (y - y_c)^4),
$$
where $\mu$ will be chosen large enough.
We  then split $R_0$ into
$$
R_0 = R_{0,critical} + R_{0,Rayleigh}, 
$$
where
$$
R_{0,critical}(y) = \psi(y) R_0(y) 
$$
and
$$
R_{0,Rayleigh} = ( 1 - \psi(y) )  R_0.
$$
We note that $R_{0,Rayleigh}$ is of the form (\ref{ee1}) near $y_c$, holomorphic away from $y_c$, and decays like $e^{- \alpha y}$ at infinity.
We thus define $\phi_1$ to be the solution of
$$
Ray \, \phi_1 = - R_{0,Rayleigh} 
$$
which goes to $0$ at $+\infty$. Using Proposition \ref{propXrho}, $\phi_1$ exists and is of the form (\ref{ee2}) near $y_c$
and holomorphic away from $y_c$. It is exponentially decaying for large $y$, like $e^{-\alpha y}$, and is of order $O(\eps)$.

We now turn to $R_{0,critical}$ and want to solve
$$
Orr_{\alpha,c,\nu} \phi_s = - R_{0,critical} .
$$
We note that $R_{0,critical}$ is rapidly decreasing, however it is not bounded and has in fact the same singularity as $R_0$ at $y_c$.
More precisely, we expand $Q$ in power series near $y_c$ and rewrite $R_{0,critical}$ under the form 
\beq \label{criti}
R_{0,critical} =  - { \eps Q_0 \over (y - y_c)^3} - { \eps Q_1 \over (y - y_c)^2} -  {\eps Q_2 \over y - y_c} - Q_3 \eps \log(y - y_c) 
 - \eps \widetilde Q,
\eeq
where $Q_0$, $Q_1$, $Q_2$ and $Q_3$ are various constants, and where $\widetilde Q$ is a function of the form (\ref{ee1}).
Let $\eps Q_{sing}$ denote the four first terms of the right hand side of (\ref{criti}).
We then introduce $\phi_{sing}$, defined by
$$
\phi_{sing} = Q_0^1 (y - y_c) \log (y - y_c) + Q_1^1 (y - y_c)^2 \log (y - y_c)
$$
$$
+ Q_2^1 (y - y_c)^3 \log (y - y_c) + Q_3^1 (y - y_c)^4 \log (y - y_c)
$$
and choose the various coefficients $Q_0^1$, $Q_1^1$, $Q_2^1$ and $Q_3^1$ in such a way that the singular terms in 
$\eps \partial_y^4 \phi_{sing}$ exactly equal $- \eps Q_{sing}$, namely in such a way that 
$\eps \partial_y^4 \phi_{sing} + \eps Q_{sing}$ is of the form  (\ref{ee1}).

We then write $\phi_s =  \psi \phi_{sing} + \phi_{ref}$, where
\beq \label{previous}
Orr_{\alpha,c,\nu} (\phi_{ref}) =  \eps \widetilde Q - \eps Q_{sing} - Orr_{\alpha,c,\nu}(\psi \phi_{sing})
\eeq
As the right hand side of (\ref{previous}) is rapidly decreasing and bounded, we may apply Proposition \ref{firstresolution}.
As a consequence, $\phi_s$ is well defined on $\Gamma_{\beta_0,\beta_1}(y_c)$ and bounded on this domain.

We then introduce 
$$
\phi_{-,\alpha,1} = \phi_{-,\alpha} + \phi_1 + \phi_s + \phi_r
$$
and observe that 
$$
Orr_{\alpha,c,\nu}(\phi_{-,\alpha,1}) = - \eps \partial_y^4 \phi_1,
$$
which is of order $O(\eps^2)$.
We then iterate the process a finite number of times to end the proof of this Theorem. 
Note that this process can not be infinitely repeated since at each step we lose four derivatives.
\end{proof}


\section{Interior Green function}


The first step in the construction of the genuine Green function for Orr Sommerfeld equation is to construct an "interior" Green function,
namely a Green function which does not take into account boundary conditions at $y = 0$.
We first build an "approximate" Green function and then iterate it to get an "exact" one.


\subsection{Approximate interior Green function \label{approximateinterior}}


We first construct an approximate Green function $G_i^{app}$ which "approximately" solves
$$
Orr_{\alpha,c,\nu} G = \delta_x ,
$$
together with the classical boundary conditions at infinity. We do not enforce boundary conditions at $y = 0$.

We recall that $\mu(y)$, defined in (\ref{definitionmu}), satisfies
$$
\mu(y) \sim \gamma^{3/2} g'(y) (g(y) - y_c)^{1/2}, 
$$
provided $\gamma (y - y_c)$ is large enough, and is of order $\gamma$ if $\gamma (y - y_c)$ is bounded.
Note that $\mu(y)$ is of magnitude $\gamma^{3/2}$ when $y$ goes to infinity.

We look for $G_i^{app}$ under the form
\beq \label{int}
G_i^{app}(x,y) = a_+(x)  \phi_{s,+}^{app}(y) 
+ {b_+(x) }  {\phi_{f,+}(y) \over \phi_{f,+}(x)} 
\quad \hbox{for} \quad y < x,
\eeq
$$
G_i^{app}(x,y) = a_-(x)  \phi_{s,-}^{app}(y)
+ {b_-(x)} {\phi_{f,-}(y) \over \phi_{f,-}(x)} 
\quad \hbox{for} \quad y  > x.
$$

\begin{theorem} \label{boundG}
Let $N$ be arbitrarily large. 
Then we can choose $a_\pm$ and $b_\pm$ such that
\beq \label{appGreen0}
a_\pm(x) = O \Bigl( {e^{\mp | \alpha | x} \over \eps \mu^2(x)} \Bigr), \qquad 
b_\pm (x)= O \Bigl( {1 \over \eps \mu^3(x)} \Bigr),
\eeq
and such that
\beq \label{appGreen}
\Bigl| Orr_{\alpha,c,\nu} \, G_i^{app} - \delta_x \Bigr| \lesssim \nu^N e^{- |\alpha| |y - x|} .
\eeq
\end{theorem}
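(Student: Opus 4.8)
The plan is to fix the four coefficients $a_\pm(x),b_\pm(x)$ in the ansatz (\ref{int}) by the transmission conditions at $y=x$ and then to read off (\ref{appGreen0}) and the error (\ref{appGreen}) from Cramer's rule. On $y>x$ the ansatz already uses the solutions $\phi_{s,-}^{app},\phi_{f,-}$ that decay at $+\infty$, so the boundary conditions at infinity are built in; the remaining freedom is used to make $G_i^{app}$, $\partial_y G_i^{app}$, $\partial_y^2 G_i^{app}$ continuous at $y=x$ and $\partial_y^3 G_i^{app}$ jump there by $-1/\eps$, the jump being forced by the leading term $-\eps\partial_y^4$ of $Orr_{\alpha,c,\nu}$. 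Writing $\tilde\phi_{f,\pm}(y)=\phi_{f,\pm}(y)/\phi_{f,\pm}(x)$, this is the linear system $M(x)\,(a_+,b_+,a_-,b_-)^t=(0,0,0,-1/\eps)^t$ whose columns are the Wronskian vectors $\bigl(\partial_y^j u\bigr)_{j=0}^{3}(x)$ for $u\in\{\phi_{s,+}^{app},\tilde\phi_{f,+},-\phi_{s,-}^{app},-\tilde\phi_{f,-}\}$. Up to a sign and the factors $\phi_{f,\pm}(x)^{-1}$ pulled out of the fast columns, $\det M(x)$ equals the Wronskian $W(\phi_{s,+}^{app},\phi_{s,-}^{app},\phi_{f,+},\phi_{f,-})(x)$; since $Orr_{\alpha,c,\nu}$ written as $\partial_y^4\psi+\cdots=0$ has no third-order term, Abel's identity makes that Wronskian constant in $y$ (up to the $O(\eps^N)$ by which $\phi_{s,\pm}^{app}$ fail to solve Orr Sommerfeld), and it is nonzero because at $y\to+\infty$ the four solutions behave like $e^{\pm\alpha y}$, $e^{\pm\mu_f y}$ with $\pm\alpha,\pm\mu_f$ distinct ($|\mu_f|\gg1\gg\alpha$, cf. (\ref{disperinfini2})). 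Hence $M(x)$ is invertible throughout $\Gamma_{\beta_0,\beta_1}(y_c)$.

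The core of the proof is then a bookkeeping of orders in Cramer's rule. By Proposition \ref{lem-exactphija}, $\partial_y^j\phi_{s,\pm}^{app}(x)$ is of size at most $|\gamma|^{j}e^{\pm|\alpha|x}$ for $0\le j\le3$ (successive costs $1,|\log\gamma|,|\gamma|,|\gamma|^2$, the Wronskian of the pair normalised to $1$), while by the construction of $\phi_{f,\pm}$ from $Ai_a,Ci_a$ (Proposition \ref{constructfast} and (\ref{definitionmu})) one has $\tilde\phi_{f,\pm}(x)=1$ and $\partial_y^j\tilde\phi_{f,\pm}(x)$ of size $\mu(x)^j$, the two fast solutions differing in sign at order $\mu(x)$, where $\mu(x)$ interpolates between $|\gamma|$ for $|x-y_c|\lesssim|\gamma|^{-1}$ and $|\gamma|^{3/2}$ for $|x-y_c|\gtrsim1$. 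Since $\mu(x)\gtrsim|\gamma|$, the fast columns govern the two bottom rows of $M(x)$; expanding the cofactors with these sizes — the fast normalisation factors cancelling between numerator and denominator — Cramer's rule gives
$$
b_\pm(x)=O\!\Bigl(\frac{1}{\eps\,\mu^3(x)}\Bigr),\qquad
a_\pm(x)=O\!\Bigl(\frac{e^{\mp|\alpha|x}}{\eps\,\mu^2(x)}\Bigr),
$$
the factor $e^{\mp|\alpha|x}$ in $a_\pm$ coming from $\phi_{s,\pm}^{app}(x)^{-1}\sim e^{\mp|\alpha|x}$. This is (\ref{appGreen0}). The main obstacle is to carry this estimate uniformly over $x\in\Gamma_{\beta_0,\beta_1}(y_c)$: one must follow the passage through the three regimes of $\mu(x)$ and, in the critical-layer region $|x-y_c|\lesssim|\gamma|^{-1}$ where $\mu(x)$ saturates at $|\gamma|$, control the residual logarithmic branch of $\phi_{s,\pm}^{app}$ at $y_c$ and the small factor $|y_c|\log|\gamma|$ that enters its bounds.

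Finally, the error bound (\ref{appGreen}) is almost immediate. By the choice of transmission conditions, $Orr_{\alpha,c,\nu}G_i^{app}$ differs from $\delta_x$ by a locally integrable function that is smooth away from $y=x$; and away from $y=x$, $G_i^{app}(x,\cdot)$ is a combination of the \emph{exact} solutions $\phi_{f,\pm}$ ($Orr_{\alpha,c,\nu}\phi_{f,\pm}=0$) and of $\phi_{s,\pm}^{app}$, for which Proposition \ref{lem-exactphija} gives $Orr_{\alpha,c,\nu}\phi_{s,\pm}^{app}=O(\eps^{N}e^{\pm|\alpha|y})$. Hence
$$
Orr_{\alpha,c,\nu}G_i^{app}-\delta_x
= a_+(x)\,Orr_{\alpha,c,\nu}\phi_{s,+}^{app}(y)\,1_{y<x}
+ a_-(x)\,Orr_{\alpha,c,\nu}\phi_{s,-}^{app}(y)\,1_{y>x},
$$
and inserting $|a_\pm(x)|\lesssim e^{\mp|\alpha|x}/(\eps\,\mu^2(x))$, $\mu(x)\gtrsim|\gamma|\gg1$ and $\eps=\nu/(i\alpha)$ bounds the right-hand side by $C\,|\eps|^{N-1}e^{-|\alpha||y-x|}$; since $|\eps|\to0$ in the regime considered and $N$ is arbitrary, this gives $\lesssim\nu^{N}e^{-|\alpha||y-x|}$ after relabelling $N$.
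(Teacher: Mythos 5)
Your setup is the same as the paper's: you impose the three continuity conditions and the jump $[-\eps\partial_y^3G_i^{app}]=-1$ at $y=x$, obtain a $4\times4$ system $M(x)v=\text{RHS}$, and feed in the magnitude estimates on $\partial_y^j\phi_{s,\pm}^{app}$ (Proposition \ref{lem-exactphija}) and on $\partial_y^j\phi_{f,\pm}/\phi_{f,\pm}$ (Proposition \ref{constructfast} and (\ref{definitionmu})). The error-bound portion of your argument — writing $Orr_{\alpha,c,\nu}G_i^{app}-\delta_x$ as a combination of the defects $Orr_{\alpha,c,\nu}\phi_{s,\pm}^{app}=O(\eps^N e^{\pm|\alpha|y})$ scaled by $a_\pm(x)$, then absorbing the loss by relabelling $N$ — is exactly what the paper does, and is fine.

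Where you diverge is the inversion of $M(x)$, and this is where there is a genuine gap. You propose Cramer's rule plus Abel's identity: since the $\partial_y^3$ coefficient of $Orr_{\alpha,c,\nu}$ (written monically) vanishes, the Wronskian $W$ of the four solutions is $x$-independent (up to $O(\eps^N)$), so $\det M(x)=\pm W/[\phi_{f,+}(x)\phi_{f,-}(x)]$, and you assert that the fast normalisation factors cancel in the Cramer quotients. That cancellation, however, is not automatic: in the quotient for $b_+$, say, the cofactor carries a $\phi_{f,-}(x)^{-1}$ from the remaining fast column, so the ratio still contains a residual $\phi_{f,+}(x)$ together with a $3\times3$ minor involving $\phi_{f,-}(x)$ and its derivatives; one has to show that the exponentially large factor $\phi_{f,+}(x)$, the exponentially small minor, the constant $W$, and the $x$-dependent $\mu(x)$ all combine to give precisely $O(1/(\eps\mu^3(x)))$ uniformly across the three regimes $|x-y_c|\lesssim|\gamma|^{-1}$, $|\gamma|^{-1}\lesssim|x-y_c|\lesssim1$ and $|x-y_c|\gtrsim1$. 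You do not carry out this bookkeeping, and it is the heart of the estimate: without a lower bound on $|\det M(x)|$ with the correct $x$-dependence, Cramer's rule gives no bound on $a_\pm,b_\pm$ at all.

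The paper avoids this by normalising the rows of $M$ by $1,\mu,\mu^2,\mu^3$ (so the RHS becomes $(0,0,0,-1/\eps\mu^3(x))$), which makes every entry of $M$ bounded, then using the $2\times2$ block decomposition $M=\bigl(\begin{smallmatrix}A&B\\C&D\end{smallmatrix}\bigr)$: $D$ is a perturbation of $\bigl(\begin{smallmatrix}1&1\\-1&1\end{smallmatrix}\bigr)$, $B=O(1)$, $C=O(\gamma^{-1})$, and $A$ is controlled by the slow Wronskian; the inverse is then built by a Neumann series around the triangular approximate inverse. This is a way of making your "fast columns govern the bottom rows" idea rigorous, with the $\mu^j$ normalisation doing exactly the cancellation you assert. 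I would suggest either adopting that normalisation, or, if you want to keep the Abel's-identity route, explicitly computing $W$ from the asymptotics at $y\to+\infty$ together with the growth/decay of $\phi_{f,\pm}(x)$, and then tracking the magnitudes of all four cofactors through the three regimes of $\mu(x)$.
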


\Remarks
The approximate Green function $G_i^{app}$ uses the functions $\phi_{s,\pm}^{app}$ which are only approximate solutions, which leads
to the right hand side term in (\ref{appGreen}).

Let us first give an heuristic proof of this Theorem when $\gamma (y - y_c)$ is large. As a first approximation,
the jump in the third derivative of $G_i$ at $x$ (namely $- \eps^{-1}$), together with the continuity of its second derivative
are taken in charge by the fast modes $\phi_{f,\pm}$. As the third derivatives of these modes are of order $\gamma^{9/2}$, this
implies that $b_\pm$ are of order $\eps^{-1} \gamma^{-9/2} = O( \gamma^{-3/2})$. The continuity of $G_i$ and its first derivatives are taken in charge by
the slow modes $\phi_{s,\pm}^{app}$. First derivatives of fast modes are of order $\eps^{-1} \gamma^{-3}$, hence $a_\pm$ are of order
$\eps^{-1} \gamma^{-3} = O(1)$, namely of order $\eps^{-1} \mu^{-2}(x)$.

The scaling is different when $\gamma (y - y_c)$ is bounded. In this case, the third derivatives of the fast modes are of order $\gamma^3$ only, thus
$b_\pm$ are of order $\eps ^{-1} \gamma^{-3} = O(1)$. The first derivatives are of order $\eps^{-1} \gamma^{-2}$, thus $a_\pm$ are
of order $\eps^{-1} \gamma^{-2} = O(\gamma)$, namely much larger, which is natural since at $y_c$ the second order term $(U_s - c) \partial_y^2$
almost vanishes.

\begin{proof}
The proof follows the lines of \cite{GN}. Let 
$$
v(x) = \Bigl( - a_-(x), a_+(x), - b_-(x), b_+(x)  \Bigr) ,
$$
and let 
$$
M = \left( \begin{array}{cccc} 
\phi_{s,-}^{app} & \phi_{s,+}^{app} & 1  &  1  \cr
\partial_y \phi_{s,-}^{app}  / \mu & \partial_y \phi_{s,+}^{app} /   \mu
& \partial_y\phi_{f,-} /  \phi_{f,-} \mu & \partial_y\phi_{f,+} /   \phi_{f,+} \mu  \cr
\partial_y^2 \phi_{s,-}^{app} /   \mu^2 &\partial_y^2 \phi_{s,+}^{app} /   \mu^2
& \partial_y^2 \phi_{f,-} /  \phi_{f,-}  \mu^2 & \partial_y^2 \phi_{f,+} /   \phi_{f,+} \mu^2 \cr
 \partial_y^3 \phi_{s,-}^{app} /   \mu^3 &  \partial_y^3 \phi_{s,+}^{app} /  \mu^3 
& \partial_y^3 \phi_{f,-}  /  \phi_{f,-} \mu^3 &  \partial_y^3 \phi_{f,+} /   \phi_{f,+} \mu^3  \cr 
\end{array} \right) ,
$$
where the functions $\phi_{s,\pm}^{app}$ and $\phi_{f,\pm}$ are evaluated at $x$.
By definition of the Green function, $G_i^{app}$, $\partial_y G_i^{app}$ and $\partial_y^2 G^{app}_i$ are continuous at $x = y$,
whereas $- \eps \partial_y^3 G_i^{app}$ has a unit jump at $x = y$,
thus 
\beq \label{Mv}
M v = (0,0,0,- 1/ \eps \mu^3(x)) .
\eeq
To solve this equation we use the $2 \times 2$ block structure of $M$.
Let $A$, $B$, $C$ and $D$ be the two by two matrices defined by
$$
M = \left( \begin{array}{cc} 
A & B \cr
C & D \cr \end{array} \right) .
$$
We will prove that $D$ is invertible, $C$ is small, $B$ is bounded and $A$ is invertible, and then invert $M$ by iteration.

Let us first study $B$ and $D$, namely the third and fourth columns of $M$, which are related to "fast solutions" constructed using Airy's functions. 
First, when $\gamma (x - y_c)$ is large, we have
$$
{\partial_y \phi_{f,-} \over \phi_{f,-}} =  g'(x) \gamma {Ai(\gamma (g(x) - y_c),1) \over Ai( \gamma (g(x) - y_c),2)} 
\Bigl[ 1 + O(\gamma^{-1}) \Bigr]
$$
$$
\sim  \gamma^{3/2} g'(x) (g(x) - x_c)^{1/2}  \Bigl[ 1 + O(\gamma^{-1}) \Bigr] \sim \mu(x),
$$
$$
{\partial_y^2 \phi_{f,-} \over \phi_{f,-}} =  g'(x)^2 \gamma^2 {Ai(\gamma (g(x) - y_c)) \over Ai( \gamma (g(x) - y_c),2)} 
\Bigl[ 1 + O(\gamma^{-1}) \Bigr]
$$
$$
\sim  \gamma^3 g'(x)^2 (g(x) - y_c)  \Bigl[ 1 + O(\gamma^{-1}) \Bigr] \sim \mu^2(x),
$$
$$
{\partial_y^3 \phi_{f,-} \over \phi_{f,-}} =  g'(x)^3 \gamma^3 {Ai'(\gamma (g(x) - y_c)) \over Ai( \gamma (g(x) - y_c),2)}
\Bigl[ 1 + O(\gamma^{-1}) \Bigr]
$$
$$
\sim  \gamma^{5/2} g'(x)^3 (g(x) - y_c)^{3/2}  \Bigl[ 1 + O(\gamma^{-1}) \Bigr] \sim \mu^3(x),
$$
and similarly for $Ci$, up to a minus sign for odd derivatives. Thus
we get
$$
D = \left( \begin{array}{cc}
1 & 1 \cr
-1 & 1 \cr 
\end{array} \right) + o(\gamma^{-1}),
$$
hence $D$ is invertible.
For small or bounded $\gamma (y - y_c)$, we note that $\phi_{f,\pm}(x)$ is of order $O(1)$ and that
$$
D =  \Bigl[ 1 + O(\gamma^{-1})  \Bigr] 
\left( \begin{array}{cc}
{g'(y)^2 \gamma^2 \mu^{-2}(x) \over Ai( \gamma (g(y) - y_c),2)}    & 0 \cr
0 & {g'(y)^3  \gamma^3 \mu^{-3}(x) \over Ci( \gamma (g(y) - y_c),2)} \cr 
\end{array} \right) 
\left( \begin{array}{cc}
Ai & Ci \cr
Ai' & Ci' \cr 
\end{array} \right)  .
$$
As $g'(y_c) = 1$ the first array on the right hand side is close to the identity, and the second one is invertible and bounded by definition of
$Ai$ and $Ci$.
Using similar arguments, we see that $B$ is bounded.

Let us turn to $A$ and $C$ which are related to the "inviscid" and "slow" functions $\phi_{s,\pm}^{app}$.
First, using Proposition \ref{lem-exactphija},
$$
C =  \left( \begin{array}{cc}  
O(c \gamma  / \mu^2)  &O( \gamma / \mu^2) \cr
O(c \gamma^2  / \mu^3) & O( \gamma^2 / \mu^3) \cr 
\end{array} \right).
$$
For small $\gamma (x - y_c)$, $\gamma / \mu^2$ and $\gamma / \mu^3$ are of order $\gamma^{-1}$.
For large $\gamma (x - y_c)$, $\gamma / \mu^2$ is of order $\gamma^{-1} g'(x)^{-2} (g(x) - y_c)^{-1}$ which is of order $\gamma^{-2}$,
and similarly for $\gamma^2 / \mu^3$. Thus
\beq \label{sizeC}
C = O(\gamma^{-1}).
\eeq
Let us turn to $A$.
Note that $A = A_1 A_2$ with
$$
A_1 = \left( \begin{array}{cc}  
1 & 0 \cr
0 & \mu^{-1} \cr
\end{array} \right),
\quad 
A_2 = \left( \begin{array}{cc}  
\phi_{s,-}^{app} & \phi_{s,+} ^{app}\cr
\partial_y \phi_{s,-}^{app}  & \partial_y \phi_{s,+}^{app}  \cr
\end{array} \right) .
$$
We have
$$
A^{-1}  = {1 \over \det(A_2)} \left( \begin{array}{cc}  
\partial_y \phi_{s,+}^{app}  & - \phi_{s,+}^{app} \cr
- \partial_y \phi_{s,-}^{app} & \phi_{s,-}^{app}  \cr
\end{array} \right) 
 \left( \begin{array}{cc}  
1 & 0 \cr
0 & \mu \cr
\end{array} \right).
$$
Hence, for small $x$,
$$
A^{-1}
= {1 \over \det(A_2)} \left( \begin{array}{cc}  
 O(\log | \gamma |)  & O(1)  \cr
O(c \log |\gamma |) & o(1)  \cr
\end{array} \right) 
 \left( \begin{array}{cc}  
1 & 0 \cr
0 & \mu \cr
\end{array} \right)
$$
\beq \label{sizeAinv}
= {1 \over \det(A_2)} \left( \begin{array}{cc}  
 O(\log | \gamma|)  & O(\mu)  \cr
O(c \log | \gamma |) & o(\mu) \cr
\end{array} \right) .
\eeq
The determinant of $A_2$ is the Wronskian of $\phi_{s,\pm}^{app}$ which approximately equals $1$.
Now for $y$ away from $0$, 
$$
A^{-1} =  {1 \over \det(A_2)} \left( \begin{array}{cc}  
 O(\alpha e^{|\alpha| x} )  & O(\mu e^{ |\alpha| x})  \cr
O(\alpha e^{-|\alpha| x}) &  O(\mu e^{- |\alpha| x})  \cr
\end{array} \right) .
$$
We now turn to the inversion of the matrix $M$. We observe that the matrix $M$ has an approximate inverse
$$
\widetilde M = \left( \begin{array}{cc}
A^{-1} & - A^{-1} B D^{-1}  \cr
0 & D^{-1} \cr 
\end{array} \right) 
$$
in the sense that $M  \widetilde M = Id + N$ where
$$
N =  \left( \begin{array}{cc}
0 & 0 \cr 
 C A^{-1} &  - C A^{-1} B D^{-1} \cr 
\end{array} \right) .
$$
Now a direct calculation using (\ref{sizeC}) and (\ref{sizeAinv}) shows that, for small $x$, 
$$
C A^{-1} = o(1),
$$ 
and that, for large $x$, $CA^{-1} = o( e^{| \alpha | x})$.
Similarly
$$
C A^{-1} B D^{-1} = o( e^{| \alpha | x}).
$$
Thus $N = o(1)$. In particular, $(Id + N)^{-1}$
is well defined and 
$$
M^{-1} = \widetilde M (Id + N)^{-1} = \widetilde M \sum_{n\ge 0} N^n .
$$
Note that the first two lines of  $\sum_{n \ge 0} N^n$  vanish.
Therefore
$$
(Id + N)^{-1} (0,0,0, 1 / \eps \mu^3) = \Bigl( 0, 0, O(1 / \eps \mu^3),  O(1 / \eps \mu^3)  \Bigr) .
$$
As $D^{-1}$ is bounded we obtain that $b_\pm$ is of order $\eps^{-1} \mu^{-3}$.
Moreover, $A^{-1} B D^{-1}$ is of order $O(\mu)$, which gives the desired bound on $a_\pm$.
\end{proof}


\subsection{Exact interior Green function}


The construction of an exact Green function, starting from an accurate approximate one, using an iterative process, is classical,
and we will only sketch the proof of the following Proposition.

\begin{proposition}
Let $G^{int}$ be the exact interior Green function  for the Orr Sommerfeld equation, defined by
$$
Orr_{\alpha,c,\nu} \, G^{int} = \delta_x
$$
together with its boundary condition at infinity.
Then
$$
\| G^{int} - G_i^{app} \|_{X_{\alpha/2}} \lesssim \nu^N .
$$
\end{proposition}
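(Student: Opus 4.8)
The plan is to use the approximate interior Green function $G_i^{app}$ of Theorem \ref{boundG} as the kernel of an approximate solver and to correct it by a convergent Neumann series, exactly as the exact solver $\mathcal{S}^{Ai}$ was built from $G^{Ai}_{app}$. First I would introduce the operator $(\mathcal{S}_{app} g)(y) = \int_0^{+\infty} G_i^{app}(x,y)\, g(x)\, dx$. By the piecewise form (\ref{int}) together with the jump relations (\ref{Mv}) built into it, and since $G_i^{app}$ already satisfies the boundary conditions at $+\infty$, one has
$$
Orr_{\alpha,c,\nu}\circ\mathcal{S}_{app} = Id + \mathcal{E},
$$
where $\mathcal{E}$ is the operator with kernel $E(x,y) := (Orr_{\alpha,c,\nu} G_i^{app})(x,y) - \delta_x(y)$, which by (\ref{appGreen}) satisfies $|E(x,y)| \lesssim \nu^N e^{-|\alpha||y - x|}$.

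The next step is to estimate both operators on the weighted space $X_{\alpha/2}$ (understood for kernels as the two-sided weight $e^{(\alpha/2)|y-x|}$). Using the bounds (\ref{appGreen0}) on $a_\pm$ and $b_\pm$, the slow decay $\phi_{s,\pm}^{app}(y) = O(e^{\pm|\alpha| y})$ from Proposition \ref{lem-exactphija}, and the fast (rate $\gtrsim|\gamma|$) decay of $\phi_{f,-}(y)/\phi_{f,-}(x)$, a direct estimate of $\int_0^{+\infty} |G_i^{app}(x,y)|\,e^{-(\alpha/2)|x-x_0|}\, dx$ shows that
$$
\|\mathcal{S}_{app}\|_{X_{\alpha/2}\to X_{\alpha/2}} \lesssim \frac{|\gamma|}{|\alpha|} \lesssim \nu^{-1/2},
$$
the dominant contribution being the slow part with amplitude $1/(\eps|\gamma|^2)\sim|\gamma|$; at the same time, since the residual $E$ already decays like $e^{-|\alpha||y-x|}$,
$$
\|\mathcal{E}\|_{X_{\alpha/2}\to X_{\alpha/2}} \lesssim \nu^N .
$$
Thus, for $\nu$ small (and $N$ at least, say, $2$), $\mathcal{E}$ is a contraction, $Id + \mathcal{E}$ is invertible via its Neumann series, and $\mathcal{S}^{int} := \mathcal{S}_{app}(Id + \mathcal{E})^{-1}$ is a right inverse of $Orr_{\alpha,c,\nu}$ which, like $\mathcal{S}_{app}$, sends exponentially decaying sources to functions satisfying the conditions at $+\infty$; its kernel is the exact interior Green function $G^{int}$. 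Since
$$
G^{int} - G_i^{app} \ = \ \text{kernel of}\ \ -\mathcal{S}_{app}\,\mathcal{E}\,(Id + \mathcal{E})^{-1},
$$
one obtains
$$
\|G^{int} - G_i^{app}\|_{X_{\alpha/2}} \ \lesssim\ \frac{\|\mathcal{S}_{app}\|\,\|\mathcal{E}\|}{1 - \|\mathcal{E}\|} \ \lesssim\ \nu^{N - 1/2},
$$
and since $N$ in Theorem \ref{boundG} is arbitrarily large, replacing $N$ by $N + 1$ yields the stated estimate.

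The main obstacle is quantitative rather than structural: $\|\mathcal{S}_{app}\|$ diverges as $\nu \to 0$, by a fixed negative power of $\nu$ coming from the factor $1/\eps$ in (\ref{appGreen0}) and the factor $1/|\alpha|$, so the smallness of the residual $E$ must beat every fixed power of $\nu$; this is exactly why Theorem \ref{boundG} is formulated with an arbitrarily large exponent $N$, and it is the reason one cannot simply invert $Orr_{\alpha,c,\nu}$ on slowly decaying sources through Proposition \ref{firstresolution}. A secondary point to check is that the iteration stays inside the class of kernels obeying the boundary conditions at infinity, i.e.\ that each correction $\mathcal{S}_{app}(-\mathcal{E})^n g$ still decays like $e^{-|\alpha| y}$ for $y > x$ and remains subdominant with respect to $\phi_{f,+}$ and $\phi_{s,+}^{app}$ for $y < x$; this follows from the piecewise structure of $G_i^{app}$ and the weighted bounds above. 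No uniqueness is claimed (and none holds, since one may add arbitrary multiples of the decaying modes $\phi_{s,-}^{app}$ and $\phi_{f,-}$): $G^{int}$ is the particular exact interior Green function produced by this scheme.
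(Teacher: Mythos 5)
Your proof is correct and is essentially the same as the paper's: both construct $G^{int}$ from $G_i^{app}$ by a Neumann (Picard) iteration, the paper writing it out at the kernel level ($G_1 = -\int G_i^{app}(z,y)R(x,z)\,dz$, etc.) and you packaging the same iteration as $\mathcal{S}^{int} = \mathcal{S}_{app}(Id+\mathcal{E})^{-1}$. Your version is slightly more explicit about the key quantitative point — that $\|\mathcal{S}_{app}\|$ diverges by a fixed power of $\nu$, which is exactly why the arbitrarily large $N$ in Theorem \ref{boundG} is needed — and accordingly your bound $\nu^{N-1/2}$ is a bit sharper than the paper's conservative $\nu^{N/2}$, but the argument is the same.
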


\begin{proof}
Let 
$$
R(x,y) = Orr_{\alpha,c,\nu} G_i^{app}(x,y) - \delta_x 
$$
be the error of the approximate Green function. By construction, we have
$$
\| R(x,\cdot) \|_{X_{\alpha/2}} \le \nu^N 
$$
for some large integer $N$.
We construct the Green function $G^{int}$  by iteration, starting from $G_0 = G_i^{app}$.
 We have
 $$
 Orr_{\alpha,c,\nu} G_0 - \delta_x =  R_0(x,y)
 $$
 with $R_0 =R$.
 We then introduce 
 $$
 G_1(x,y) = - \int G_i^{app}(z,y) R(x,z) \, dz .
 $$
 We note that $\| G_1(x,y) \|_{X_{\alpha/2}} \lesssim \nu^{N/2}$ provided $N$ is large enough.
 We then observe that
 $$
 Orr_{\alpha,c,\nu} G_1 + R_0(x,y) = R_1(x,y)
$$
where
$$
R_1(x,y) = - \int R(z,y) R(x,z) \, dz
$$
satisfies $\| R_1 \|_{X_{\alpha/2}} \lesssim \nu^{3N/2}$.
We then iterate the procedure and define
$$
G^{int}= G_i^{app} + \sum_{k \ge 1} G_k,
$$ 
which ends the proof.
\end{proof}

 
 \subsection{Construction of slow solutions}
 
 
 We now construct genuine slow solutions $\phi_{s,\pm}$, starting from approximate one, using the exact Green function constructed before.
 The method is classical, and we will only sketch the proof of the following result.
 
 \begin{proposition}
 There exists two solutions $\phi_{s,\pm} \in \Gamma_{\beta_0,\beta_1}$ to Orr Sommerfeld equations, which behave like $e^{\pm |\alpha| y}$ at infinity.
 These solutions satisfy the estimates of Proposition \ref{lem-exactphija}.
 \end{proposition}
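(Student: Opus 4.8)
The plan is to upgrade the approximate slow solutions $\phi_{s,\pm}^{app}$ of Proposition \ref{lem-exactphija} into genuine ones by a Green–function correction, exactly as the fast modes and $G^{int}$ itself were obtained from their approximate versions. Write $R_\pm:=Orr_{\alpha,c,\nu}(\phi_{s,\pm}^{app})$; by Proposition \ref{lem-exactphija} one has $|R_\pm(y)|\lesssim \eps^N e^{\pm|\alpha| y}$ with $N$ as large as we wish. For the \emph{decaying} mode I would simply set
\[
\phi_{s,-}:=\phi_{s,-}^{app}-\int_0^{+\infty} G^{int}(x,y)\,R_-(x)\,dx ,
\]
the contour being deformed into $\Gamma_{\beta_0,\beta_1}$ when convenient. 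Since $G^{int}$ is the \emph{exact} interior Green function of the previous Proposition, $Orr_{\alpha,c,\nu}\phi_{s,-}=0$ and $\phi_{s,-}$ inherits the $e^{-|\alpha| y}$ decay at $+\infty$ (if one starts instead from $G_i^{app}$, the same formula is iterated, the residual being multiplied by $\nu^N$ at each step). The integral converges: on the branch $x>y$ the coefficients $a_\pm,b_\pm$ carry a factor $e^{-|\alpha| x}$ (cf. (\ref{appGreen0})) which against $R_-(x)\sim e^{-|\alpha| x}$ gives an exponentially integrable tail $e^{-2|\alpha| x}/(\eps\mu^2(x))$, while on $x<y$ the range is finite; using $|\mu(x)|\gtrsim|\gamma|\gg1$ one gets $\|\phi_{s,-}-\phi_{s,-}^{app}\|_{X_{\alpha/2}}\lesssim \nu^{N'}$ for some large $N'$, together with the corresponding bounds on $\partial_y,\partial_y^2,\partial_y^3$ through (\ref{appGreen0}).

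Consequently the quantitative conclusions of Proposition \ref{lem-exactphija} transfer to $\phi_{s,-}$. In particular the correction is negligible at $y=0$, so $\partial_y\phi_{s,-}(0)/\phi_{s,-}(0)=\partial_y\phi_{s,-}^{app}(0)/\phi_{s,-}^{app}(0)+O(\nu^{N'})$, and since the $O(\nu^{N'})$ error is dwarfed by the $O(\eps)$ already present in (\ref{derivphis}), the right–hand side is still given by (\ref{psippsi}) up to $O(\eps)$; the weighted $L^\infty$ estimates on $\phi_{s,-}$ and its first three derivatives carry over verbatim. Moreover $\phi_{s,-}$, being an exact solution of the non-degenerate fourth–order equation $Orr_{\alpha,c,\nu}\phi=0$ whose coefficients are holomorphic on $\Gamma_{\beta_0,\beta_1}$, is itself holomorphic there: the $(y-y_c)\log(y-y_c)$–type branch carried by $\phi_{s,-}^{app}$ (and by $R_-$, and hence a priori by the correction) cancels in the sum — this is the smoothness mechanism already explained at the beginning of Section 5 — so indeed $\phi_{s,-}\in\Gamma_{\beta_0,\beta_1}$. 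Finally one rescales the pair so that the Wronskian of $\phi_{s,+}$ and $\phi_{s,-}$ equals $1$.

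The hard part is the \emph{growing} mode $\phi_{s,+}$, for which the naive convolution $\int_0^{+\infty}G^{int}(x,y)R_+(x)\,dx$ is resonant at $y=+\infty$: on the branch $x>y$ it produces $\phi_{s,+}^{app}(y)\int_y^{+\infty}a_+(x)R_+(x)\,dx$ with $a_+(x)R_+(x)=O(\eps^{N-1}/\mu^2(x))$ and $\mu$ bounded at infinity, so the $x$–integral diverges. I would instead use that $\phi_{s,-},\phi_{f,+},\phi_{f,-}$ are three exact, linearly independent solutions, so a fourth exact solution $\phi_{s,+}$ exists; by the asymptotic dispersion relation (\ref{disperinfini2}), whose only admissible exponents at $+\infty$ are $\pm|\alpha|$ and $\pm\mu_f$ and of which the last three have been used up, it must behave like $e^{+|\alpha| y}$. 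To compare $\phi_{s,+}$ with $\phi_{s,+}^{app}$ I would correct by variation of parameters built on $\phi_{s,-},\phi_{f,\pm}$, with all coefficients normalised to vanish at $y=0$: the V.o.P. kernel does contain a resonant $e^{2|\alpha| x}$ factor and powers of $|\gamma|$, but these are beaten by the gain $\eps^N$ in $R_+$ once $N$ is large, so the correction is a $\phi_{s,-}$–type (hence, at $+\infty$, lower order) perturbation of relative size $O(\eps^{N'})$ vanishing at $0$. This keeps both the $e^{+|\alpha| y}$ asymptotics and the estimates of Proposition \ref{lem-exactphija}, and, crucially, does not pollute $\phi_{s,+}(0)$ and $\partial_y\phi_{s,+}(0)$, which feed the dispersion relation. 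The one genuinely delicate bookkeeping point is exactly this: the growing mode is defined only modulo the decaying one, and one must verify that this ambiguity, combined with the resonance, is absorbed into an $O(\eps^{N'})$ error at $y=0$.
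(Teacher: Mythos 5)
Your construction of $\phi_{s,-}$ coincides with what the paper intends: subtract the convolution of the residual $R_-$ against the exact interior Green function $G^{int}$, note that the bounds on $a_\pm,b_\pm$ give a correction of size $\nu^{N'}$ in the weighted norms, and invoke the regularity argument from the start of Section~5 (the exact solution of a non-degenerate fourth-order ODE with holomorphic coefficients is holomorphic on all of $\Gamma_{\beta_0,\beta_1}$, so the $\log$-branches of $\phi_{s,-}^{app}$ and $R_-$ must cancel). This matches the paper's one-line reference to ``iteration, following the lines of Proposition~\ref{constructfast}'' plus the smoothness remark.

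Where you diverge from the paper is on $\phi_{s,+}$, and you have put your finger on a real issue that the paper's proof does not address. Indeed $a_+(x)R_+(x)=O(\eps^{N-1}/\mu^2(x))$ with $\mu^2(x)\to|\gamma|^3$ (a constant) as $x\to\infty$, so $\int_y^{+\infty}a_+(x)R_+(x)\,dx$ genuinely diverges, and neither $G^{int}$ (whose branch $y>x$ is built from the decaying modes) nor Proposition~\ref{firstresolution} (which demands $f\in X_\sigma$ with $\sigma$ large) applies to a source growing like $e^{+|\alpha|y}$. Your workaround — invoke abstract existence of a fourth independent solution and then compare — is in the right spirit, but two points need tightening. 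First, a \emph{generic} fourth solution has a nonzero $\phi_{f,+}$ component and is therefore dominated by $e^{+\Re\mu_f\,y}$ at infinity, not by $e^{+|\alpha|y}$; the ``three exponents have been used up'' argument must be supplemented by explicitly subtracting the $\phi_{f,+}$ component (read off from the asymptotics of the candidate solution at infinity) before one can claim the $e^{+|\alpha|y}$ behaviour. Second, ``variation of parameters built on $\phi_{s,-},\phi_{f,\pm}$'' involves only three solutions of a fourth-order equation; the clean way to implement what you want is to use the full fundamental system $\{\phi_{s,-},\phi_{s,+}^{app},\phi_{f,-},\phi_{f,+}\}$ in the V.o.P. formula — equivalently, to build a \emph{second} interior Green function $G^{int,+}$ that selects $\phi_{s,+}^{app}$ and $\phi_{f,-}$ for $y>x$ (and $\phi_{s,-},\phi_{f,+}$ for $y<x$), so that convolution against the growing $R_+$ converges — and then iterate away the $O(\nu^N)$ error introduced by replacing $\phi_{s,+}$ with its approximation, exactly as in the paper's Section~6.2. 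With these two fixes your argument closes, and it is in fact more explicit than the paper's on this point.
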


 \Remarks
 Note that we are not dealing with modes of Orr Sommerfeld equations, but simply of solutions, which allows the construction through the Green function.
 
 As all the solutions of Orr Sommerfeld are smooth, $\phi_{s,\pm}$ is smooth, even at $y_c$, and thus defined on $\Gamma_{\beta_0,\beta_1}$.
 
 \begin{proof}
 The construction of genuine slow solutions is done by iteration, following the lines of Proposition \ref{constructfast}.
 As observed in the beginning of section $5$, the solution is smooth and thus belongs to $\Gamma_{\beta_0,\beta_1}$, and
 not only to $\Gamma_{\beta_0,\beta_1}(y_c)$.
  \end{proof}

 
 \subsection{Second construction of $G^{int}$}
 
 
 Now that we have constructed four exact solutions of Orr Sommerfeld equations, namely $\phi_{f,\pm}$ and $\phi_{s,\pm}$, we can
 restart the construction of section \ref{approximateinterior},  with $\phi_{s,\pm}^{app}$ replaced by $\phi_{s,\pm}$. The construction
 is similar and Theorem \ref{boundG} holds true with approximate solutions replaced by true ones in the definition of the Green function.


\section{Discussion of the dispersion relation}



\subsection{Spectrum of Orr Sommerfeld operator}


We now prove the existence of unstable eigenvalues in some range of $\alpha$. We only focus on small $| \alpha |$.


\subsubsection{The dispersion relation}


Let $c$ be an eigenvalue of Orr Sommerfeld equation, with associated eigenmode $\psi$.
As $\psi$ goes to $0$ at $+ \infty$, it must be a linear combination of $\psi_{s,-}$ and $\psi_{f,-}$, namely
$$
\psi = a \phi_{s,-} + b \phi_{f,-} 
$$
for some non zero constants $a$ and $b$.
Moreover, we must have $\psi(0) = \psi'(0) = 0$. This leads to the dispersion relation
\beq \label{disper}
{\partial_y \phi_{s,-}(0) \over \phi_{s,-}(0)} = {\partial_y \phi_{f,-}(0) \over \phi_{f,-}(0)} ,
\eeq
or, equivalently
\beq \label{disperr}
{\cal E}(\alpha,\nu,c) := W[\phi_{s,-},\phi_{f,-}](0) = 0 .
\eeq
Note that the left hand side of (\ref{disper}) is, 
at leading order, given by (\ref{psippsi}), namely by Rayleigh {\it inviscid} part,
whereas the right hand side, given by (\ref{ddis}), only depends on the Airy {\it viscous} part (see \cite{Reid}, page $273$, equation $3.73$).
This dispersion relation has of course been intensively studied in physics. We follow the presentation given by W.H. Reid in \cite{Reid} and 
his notations.
Let
$$
w = \Bigl[ 1 + {U_s'(0) \phi_{s,-}(0) \over c \phi_{s,-}'(0)} \Bigr]^{-1}
$$
and let
$$
\Lambda  = {U_s'(0) \over c} y_c - 1 .
$$
Note that $\Lambda = O(c)$.
Let
\beq \label{defiz}
z = \Bigl( {\alpha U_c' \over \nu} \Bigr)^{1/3} y_c, \qquad \xi_1  = - i^{1/3} z, 
\eeq
and let the Tietjens function be defined by 
\beq \label{Tietjensdefi}
Ti(z) = {Ai(\xi_1,2) \over \xi_1 Ai(\xi_1,1) } 
\eeq
as in Appendix \ref{appendix1}.
Then the dispersion relation (\ref{disper}) may be rewritten
\beq \label{disper2}
{w - 1 \over (1 + \Lambda) w} = Ti(z) \Bigl[ 1 + O(\gamma^{-2}) \Bigr]
\eeq
or equivalently
$$
(1 + \Lambda) Ti(z) = {U_s'(0) \over c} {\phi_{s,-}(0) \over \phi_{s,-}'(0)}  \Bigl[ 1 + O(\gamma^{-2}) \Bigr]. 
$$
Combining (\ref{derivphis}) and (\ref{psippsi}) we get
\beq \label{disper3}
(1 + \Lambda) Ti(z)  \Bigl[ 1 + O(\gamma^{-2}) \Bigr]
\eeq
$$
 = 1 - {\alpha \over c} {(U_+ - c)^2 \over  U_s'(0)} + {\alpha^2 \over c} {(U_+ - c)^4 \over  U_s'(0)}
\Bigl[ {1 \over U_s'(0) c} + \Omega_0(0,c) \Bigr] + O \Bigl( { \alpha^3 \over c} \Bigr) .
$$
We now focus on the study of this dispersion relation.


\subsubsection{Marginally stable modes and numerical illustration}


Let us first search for marginally stable modes, namely modes such that $\Im c = 0$. 
The following lines are classical in physics \cite{Drazin,Drazin2,Reid}.
Then $y_c$ is real, and thus $z$ is real. Using (\ref{defiOmega0}) we have
$$
\Re \Omega_0(0,c) = - {1 \over U_s'(0) c} + O (\log c).
$$
As $\Lambda = O(c)$, this leads to
\beq \label{reTi}
\Re Ti(z) = 1 - {U_+^2 \over U_s'(0)} {\alpha \over c}  + O \Bigl( { \alpha^2 \log c \over c} \Bigr) + O(\gamma^{-2})
\eeq
and, using (\ref{imomega0}),
\beq \label{imTi}
\Im Ti(z) \sim - \pi {\alpha^2 \over  c} {U_s''(0) U_+^4 \over U_s'^4(0)} .
\eeq
As the Tietjens function is bounded on the real line, this implies that $\alpha c^{-1}$ is bounded. Hence, as $\alpha \to 0$, $\Im Ti(z) \to 0$.
Thus, either $z \to + \infty$ or $z \to z_0 \sim 2.297$ (see Appendix \ref{appendix1}).

Let us study the first possibility. In this case, as $z \to + \infty$, $\Re Ti(z) \to 0$ and thus
$$
c \sim { U_+^2 \over  U_s'(0)} \alpha.
$$
Moreover, as $z \to + \infty$, by  (\ref{Tietjensinfini}),  $Ti(z) \sim - e^{i \pi / 4}  z^{-3/2}$. 
Using (\ref{defiz}), (\ref{imTi}) and $y_c \sim U_s'(0)^{-1} c$, this leads to $\alpha \sim C_+^\alpha \nu^{1/6}$, 
$c \sim C_+^c \nu^{1/6}$ and $z_c \sim C_+^z \nu^{1/6}$ for some constant $C_+^\alpha$, $C^c_+$ and $C_+^z$ which can
ben explicitly computed.
Moreover, $\gamma \sim C_+^\gamma \nu^{-5/18}$.
The critical layer is at a distance of order $\nu^{1/6}$ from the boundary. Its size, of order $\gamma^{-1} \sim \nu^{5/18}$, is much smaller
than its distance from the boundary: the critical layer is "split" from the boundary.
This case is referred to as the "upper marginal stability" regime (see Figure \ref{figure1}). 

Let us now turn to the second possibility, namely to the case when $z \to z_0$. In this case,
$$
c \sim C_0 {U_+^2 \over U_s'(0)} \alpha
$$
for some constant $C_0$.
Using $z \to z_0$ together with (\ref{defiz}), we obtain $\alpha \sim C_-^\alpha \nu^{1/4}$ and $c \sim C_-^c \nu^{1/4}$.
In this case,  $y_c \sim C_-^y \nu^{1/4}$ and $\gamma \sim C_-^\gamma \nu^{-1/4}$.
The critical layer is at a distance $\nu^{1/4}$ from the boundary, and its size is of the same magnitude. 
 This case is referred to as the "lower marginal stability" regime (see Figure \ref{figure1}).

Let us turn to numerical illustrations. In the case $U_+ = 1$, 
on the lower marginal stability branch we have (see \cite{Reid}, page $282$),
\beq \label{branch1}
c \approx 2.296 {\alpha \over U_s'(0)}, \qquad
\nu^{-1} \approx 1.002 \, U_s'(0) \alpha^{-4} .
\eeq
On the upper marginal stability branch (see \cite{Reid}, page $281$), when $U_+ = 1$, we have
\beq \label{branch2}
\nu^{-1} \approx {1 \over 2 \pi^2} {U_s'(0)^{11} \over U_1''(0)^2} \alpha^{-6}.
\eeq
When $\alpha$ is given, (\ref{disper3}) is easily solved using for instance Newton's method.
Figure \ref{figure2} displays $\Re \lambda$ as a function of $\alpha_0 = \alpha / \nu^{1/4}$ in the case
$U_+ = U_s'(0) = 1$. The maximum of $\Re \lambda$ occurs for $\alpha \approx 2.7 .  \nu^{1/4}$.

\begin{figure} \label{figure2}
\centerline{\includegraphics[width=7cm]{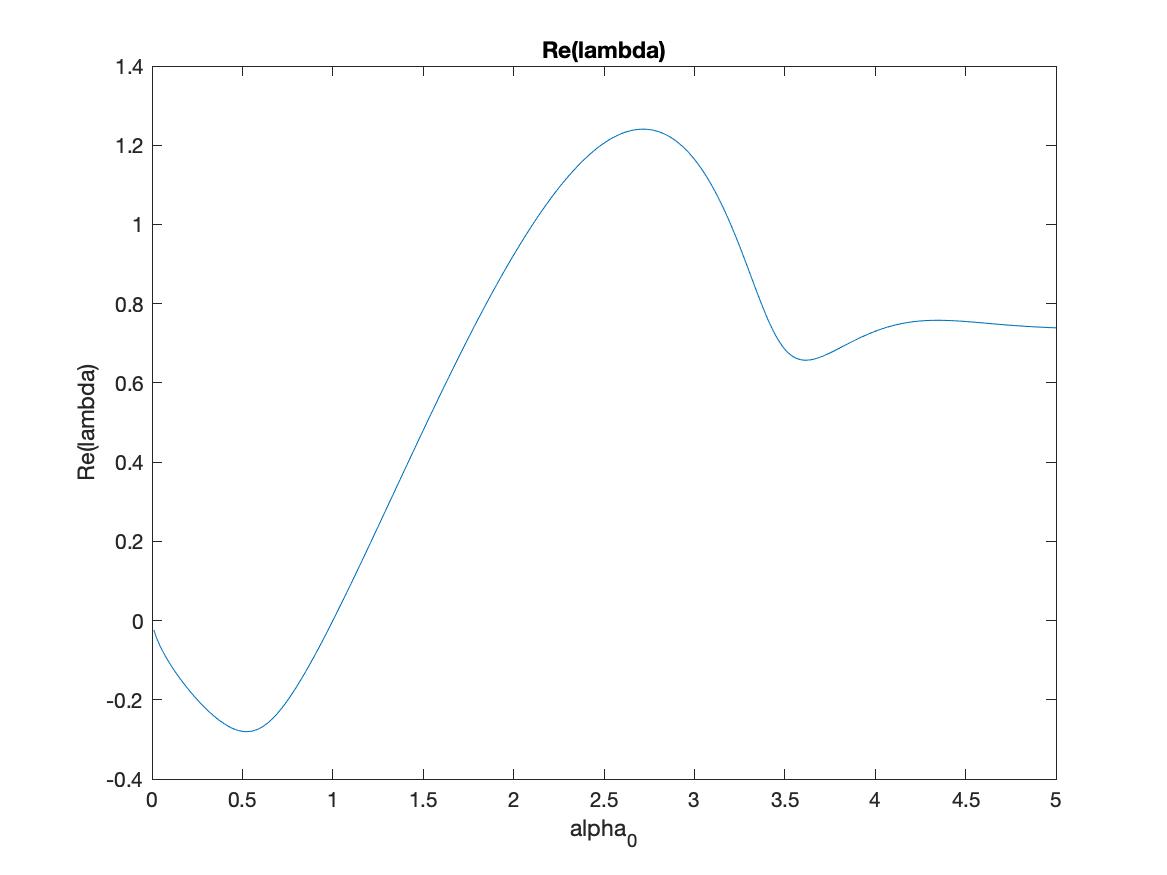}}
\caption{$\Re \lambda$ as a function of $\alpha_0 = \alpha / \nu^{1/4}$ in the case $U_+ = U_s'(0) = 1$.}
\label{lambdac}
\end{figure}


\subsubsection{Study of the dispersion relation}


Let us now study the dispersion relation (\ref{disper3}).  Let $\theta$ be defined by
$\alpha = \theta c$. Then (\ref{disper3}) reads
\beq \label{disper4}
(1 + \Lambda) \Bigl[ 1 + O(\gamma^{-2}) \Bigr] Ti(z) 
= 1 - A \theta + B \theta \alpha + O(\theta \alpha^2)
\eeq
where $A$ and $B$ are the two terms appearing in (\ref{disper3}). Note that $B$ depends on $c$ and is of order at most
$$
O(1) + O \Bigl( {\alpha^2 \log c \over c} \Bigr) = O(1) + O(\theta^2 c \log c) = O(1 + \theta^2).
$$
As $| \alpha | \ll 1$ and as $Ti$ is a bounded function, we note that $\theta$ is bounded.

If $z$ is bounded, as $\alpha$ is of order $c$, this implies that both $\alpha$ and $c$ are of order $O(\nu^{1/4})$.
The growth rate $\lambda = - i \alpha c$ then has a real part of order $\Re \lambda$ of order $O(\nu^{1/2})$.

We thus focus on the case where $z$ is large. Then $Ti(z) \sim - e^{i \pi / 4} z^{-3/2}$, thus (\ref{disper4}) gives
\beq \label{disper5}
- e^{i \pi / 4} {  \nu^{1/2}  \theta^{3/2}  U_c' \over \alpha^2 } \approx 1 - A \theta + B \theta \alpha + O(\theta \alpha^2).
\eeq
At first approximation, $\theta \sim A^{-1}$, which is a real number. As a second approximation,
\beq \label{disper6}
\theta \approx A^{-1} + B A^{-1} \alpha +  e^{i \pi / 4} {  \nu^{1/2}  U_c' \over  A^{3/2} \alpha^2 }.
\eeq
The imaginary part of $\theta$ changes sign when the imaginary part of the sum of the second and third terms cancel, namely 
when $\alpha$ is of order $\nu^{1/6}$ as previously described.
In this case, $\Re \lambda = \alpha \Im c = \alpha^2 \Im \theta^{-1}$ which is of order $\alpha^3$ is the unstable region, according to
(\ref{disper6}), namely at most of order $\nu^{1/2}$. 
Note that there exists no other solution $\theta$ of (\ref{disper5}) when $z$ is large.


\subsubsection{Evaluation of the Wronskian}


This section is devoted to the study of  $W^{-1}[\phi_{s,-},\phi_{f,-}]$.

\begin{proposition} \label{uniqueness}
All the eigenvalues $c(\alpha,\nu)$ are simple, unique in the area $\Im c \ge C_0 |\gamma|^{-1}$ provided
$C_0$ is small enough, and near them we have
\beq \label{boundWronskian}
\Bigl| {1 \over W[\phi_{s,-},\phi_{f,-}]} \Bigr| \lesssim { 1 \over \gamma | c - c(\alpha,\nu) |} .
\eeq
\end{proposition}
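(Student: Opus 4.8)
The plan is to treat ${\cal E}(\alpha,\nu,c) = W[\phi_{s,-},\phi_{f,-}](0)$, whose zeros are exactly the eigenvalues of $OS_{\alpha,c,\nu}$, as a holomorphic function of $c$ on $\{\Im c\ge C_0|\gamma|^{-1}\}$ and to produce a precise local expansion of it near an eigenvalue. That ${\cal E}$ is holomorphic there is the point where the hypothesis enters: by (\ref{constraintdelta}) the pencil domain $\Gamma_{\beta_0,\beta_1}$ on which the solutions $\phi_{s,\pm},\phi_{f,\pm}$ were constructed has width $\sim|\gamma|^{-1}$, so requiring $\Im c\ge C_0|\gamma|^{-1}$ keeps the critical layer $y_c$ (for which $\Im y_c\sim \Im c/U_c'$) strictly inside it, and then the constructions of Sections~4--6 — hence $\phi_{s,\pm}(0)$, $\partial_y\phi_{s,\pm}(0)$, $\phi_{f,\pm}(0)$, $\partial_y\phi_{f,\pm}(0)$ and ${\cal E}$ — depend holomorphically on $c$ on that set.

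I would then write ${\cal E} = \phi_{s,-}(0)\,\phi_{f,-}(0)\,{\cal D}(c)$ with ${\cal D}(c) = \frac{\partial_y\phi_{f,-}(0)}{\phi_{f,-}(0)} - \frac{\partial_y\phi_{s,-}(0)}{\phi_{s,-}(0)}$, and feed in the two available expansions. The Rayleigh term $\partial_y\phi_{s,-}(0)/\phi_{s,-}(0)$ is given by (\ref{psippsi}) combined with (\ref{derivphis}), has size $O(1/|c|)$, and is \emph{slowly varying} in $c$, with $c$-derivative $O(1/|c|^2)$. The Airy term equals $\gamma\,Ai(-\gamma y_c,1)/Ai(-\gamma y_c,2) + O(\gamma^{-2})$ by (\ref{ddis}) and, through the Tietjens reformulation (\ref{disper2}), is governed by $Ti(z)$ with $z=(\alpha U_c'/\nu)^{1/3}y_c$; the engine of the whole statement is that $\partial_c z$ is of order $\gamma$ (differentiating, with $dy_c/dc = 1/U_c'$ and $\gamma=(i\alpha U_c'/\nu)^{1/3}$, gives $\partial_c z = i^{-1/3}\gamma/U_c' + O(1)$). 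After the biholomorphic change of variable $c\mapsto z$, whose Jacobian is $\sim\gamma$, the equation ${\cal E}=0$ reads $Ti(z)\,[1+O(\gamma^{-2})] = R(z)$ where $R$, read off from (\ref{disper4}), is holomorphic and slowly varying, $|R'|=O(|\gamma|^{-1})$. The classical properties of $Ti$ from Appendix~\ref{appendix1}, together with the marginal analysis above (the bounded-$z$ regime converging to $z_0\approx 2.297$, and the large-$z$ regime where $Ti(z)\sim -e^{i\pi/4}z^{-3/2}$), then yield: $Ti-R$ has exactly one zero $z_\star$ in the relevant $z$-range, and $Ti'(z_\star)-R'(z_\star)\neq 0$ there. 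Pulling back, ${\cal E}$ has a unique, algebraically simple zero $c(\alpha,\nu)$ in $\{\Im c\ge C_0|\gamma|^{-1}\}$, which gives simplicity and uniqueness. For the bound, differentiate at $c(\alpha,\nu)$: $\partial_c{\cal E}(c(\alpha,\nu)) = \phi_{s,-}(0)\phi_{f,-}(0)\,{\cal D}'(c(\alpha,\nu))$, where ${\cal D}'(c(\alpha,\nu)) = \big(Ti'(z_\star)-R'(z_\star)\big)\,\partial_c z\times(\text{bounded, nonzero})$ up to lower order, hence is of order $\gamma/|c|$; since $\phi_{s,-}(0)=\psi_{-,\alpha}(0)+O(\eps)\asymp|c|$ (by the construction and (\ref{psippsi})) and $|\phi_{f,-}(0)| = |Ai(-\gamma y_c,2)|+O(\gamma^{-2})\gtrsim 1$, one gets $|\partial_c{\cal E}(c(\alpha,\nu))|\gtrsim\gamma$. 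Holomorphy then gives ${\cal E}(c) = \partial_c{\cal E}(c(\alpha,\nu))\,(c-c(\alpha,\nu))\,(1+O(|c-c(\alpha,\nu)|))$ on a neighbourhood of size $\sim|\gamma|^{-1}$, whence $|1/{\cal E}(c)|\lesssim 1/(\gamma\,|c-c(\alpha,\nu)|)$, which is (\ref{boundWronskian}).

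The step I expect to be the genuine obstacle is the \emph{uniqueness}. The local picture — an isolated simple zero with $|\partial_c{\cal E}|\gtrsim\gamma$ — comes out quite mechanically from $\partial_c z\sim\gamma$ and the slow variation of the Rayleigh side, but \emph{ruling out a second eigenvalue} in the strip $\{\Im c\ge C_0|\gamma|^{-1}\}$ forces one to control the global geometry of the curve $z\mapsto Ti(z)$ over the whole relevant range of $z$ and to match it against the two marginal-stability branches; that is, it is exactly here that the bulk of the classical dispersion-relation analysis carried out above is really used, in order to know that $Ti(z)=R(z)$ has no solutions beyond the one already located. A secondary technical point is checking that $Ai(-\gamma y_c,2)$ (equivalently $Ti(z_\star)$) stays bounded away from $0$ in the bounded-$z$ regime, so that the prefactor $\phi_{s,-}(0)\phi_{f,-}(0)$ is genuinely of size $\gtrsim|c|$ and the constant in (\ref{boundWronskian}) is uniform.
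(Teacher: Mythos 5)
Your overall route is the same as the paper's: write ${\cal E}=W[\phi_{s,-},\phi_{f,-}](0)$, plug in the Rayleigh expansion (\ref{psippsi})--(\ref{derivphis}) for the slow side and the Airy/Tietjens expansion (\ref{ddis}), (\ref{disper2}) for the fast side, and extract a lower bound on $|\partial_c{\cal E}|$ of order $|\gamma|$ near the zero. The change of variables $c\mapsto z$ with $\partial_c z\sim\gamma$ is just a reorganization of the paper's direct computation of $\partial_c W$ by the chain rule; the two are equivalent, and the paper's version is slightly more robust because it does not divide by $\phi_{s,-}(0)\phi_{f,-}(0)$ (your factorization ${\cal E}=\phi_{s,-}(0)\phi_{f,-}(0)\,{\cal D}$ requires the secondary check you flag, that $Ai(-\gamma y_c,2)$ stays away from $0$, whereas the Wronskian form sidesteps it).

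The genuine gap is exactly where you sense it, but it is larger than you make it sound. You write that the classical properties of $Ti$ ``then yield: $Ti-R$ has exactly one zero $z_\star$ in the relevant $z$-range, and $Ti'(z_\star)-R'(z_\star)\neq 0$.'' That is not a soft consequence of the asymptotics in Appendix \ref{appendix1}. In the bounded-$z$ regime ($\alpha\sim\nu^{1/4}$, $c\gamma\sim 1$) both contributions to $\partial_c{\cal E}$ are of the same order $\gamma^2\cdot|c|\sim\gamma$, so a cancellation between the Rayleigh side ($\sim U_s'(0)/c^2$) and the Airy side ($\sim\gamma^2\cdot\frac{d}{d\xi}\bigl(Ai(\xi,1)/Ai(\xi,2)\bigr)$) cannot be ruled out by order-of-magnitude reasoning alone; the paper's proof explicitly concedes this and states that the non-vanishing of $\partial_c W$ in that range ``can be numerically shown,'' and the remark accompanying the proposition says the same. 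Your argument therefore does not close the uniqueness/simplicity claim analytically, and the ``slowly varying $R$ versus fast-varying $Ti$'' picture is insufficient on its own. A second, smaller omission: your step ``${\cal E}(c)=\partial_c{\cal E}\cdot(c-c(\alpha,\nu))(1+O(|c-c(\alpha,\nu)|))$ on a neighbourhood of size $\sim|\gamma|^{-1}$'' implicitly uses a bound $|\partial_c^2{\cal E}|\lesssim|\gamma|^2$; the paper states and uses precisely this ($\partial_c^2 W=O(\nu^{-1/2})$ when $\partial_c W=O(\nu^{-1/4})$), and you should record that estimate rather than absorb it silently into a ``$1+O(\cdot)$'' factor.
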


\Remark
Part of this Proposition relies on numerical computations to check that the Wronskian does not vanish when $\alpha$ is of order $\nu^{1/4}$.

\begin{proof}
We note that
$$
W[\phi_{s,-},\phi_{f,-}] =
\phi_{s,-}(0) \partial_y \phi_{f,-}(0) - \partial_y \phi_{s,-}(0)  \phi_{f,-}(0)  .
$$
We can choose  $\phi_{s,-}$ such that $\partial_y \phi_{s,-}(0) = 1$. In that case,
$$
\phi_{s,-}(0)=  -  c \Psi(\alpha,c)
$$
where
$$
\Psi(\alpha,c) = \Bigl[ U_s'(0) + \alpha c^{-1} (U_+ - c)^2 - \alpha^2 c^{-1} \Omega_0(0,c) + O(\alpha^3 c^{-1}) \Bigr]^{-1}.
$$
We recall that
$$
\phi_{f,-}(0) = Ai\Bigl( - \gamma y_c,2 \Bigr) \Bigl[ 1 + O(\gamma^{-2}) \Bigr]
$$
and
$$
\partial_y \phi_{f,-}(0) = \gamma Ai\Bigl( - \gamma y_c,1 \Bigr)  \Bigl[ 1 + O(\gamma^{-2}) \Bigr],
$$
 thus 
 $$
 \partial_c \phi_{f,-}(0) =  -   \gamma U_s'(0)^{-1}  Ai( - \gamma y_c,1) \Bigl[1  + O(\gamma^{-2}) \Bigr]
 $$
  and
 $$
 \partial_c \partial_y \phi_{f,-}(0) =  \gamma^2 U_s'(0)^{-1}  Ai(- \gamma y_c) \Bigl[ 1  + O(\gamma^{-2}) \Bigr] .
 $$
This leads to
$$
\partial_c W = \partial_c \phi_{s,-}(0) \partial_y  \phi_{f,-}(0) + \phi_{s,-}(0) \partial_c \partial_y \phi_{f,-}(0) 
- \partial_c \phi_{f,-}(0)
$$
$$
= \gamma  \Bigl( U_s'(0)^{-1} -  \Psi - c \partial_c \Psi \Bigr) Ai(-\gamma y_c,1)  - c \gamma^2 \Psi U_s'(0)^{-1} Ai(- \gamma y_c)  .
$$
Two cases arise. First is $\gamma$ is of order $\nu^{-1/4}$, namely if $z$ is bounded, then $\partial_c W$ is of order $\nu^{-1/4}$,
namely of order $O(\gamma)$.
It can be  {\it numerically} shown that $\partial_c W$ never vanishes. 
Similar computations show that $\partial_c^2 W$ is of order $\nu^{-1/2}$ which ends the proof in this particular case.

If $c \gamma \gg 1$ then $\partial_c W$ does not vanish, and is of order $O(c \gamma^2)$, 
which implies (\ref{boundWronskian}) and the fact that the eigenvalue is simple.
\end{proof}


\subsubsection{Description of the eigenvalues}


We now completely describe the eigenvalues in the area $\nu^{1/4}$ to $\nu^{1/6}$.
We have to solve
\beq \label{follow}
{\cal E}(\alpha,\nu,c) = 0
\eeq
and look for $c$ as a function of $\alpha$, $\nu$ being fixed.
In a neighborhood of the area of interest, $\partial_c W$ does not vanish.
As a consequence, we can apply the implicit function theorem to (\ref{follow}), and express $c$ as a function of $\alpha$.
Note that $\Im c(\alpha)$  must be negative for large enough $\alpha$.

 If $\Im c(\alpha)$ changes sign, then it must change sign at
$\alpha = O(\nu^{1/6})$, which gives the uniqueness of the branch containing unstable modes.
Now, using Proposition \ref{uniqueness}, there can be only one branch with $\Im c \ge C_0 \gamma^{-1}$, provided $C_0$ is small enough.


\subsection{Description of unstable modes}


Let us now detail the linear unstable mode $\psi_{lin}$ near the lower marginal stability curve $\alpha = O(\nu^{1/4})$ to fix the ideas. 
Its stream function $\psi_{lin}$ is of the form
$$
\psi_{lin} = \psi_{s,-} + a \psi_{f,-} .
$$
As $\psi_{lin}(0) = 0$, we have $a = O(\gamma^{-1}) = O(\nu^{1/4})$.
Hence, for bounded $y$,
\beq \label{psilin}
\psi_{lin}(y) = U_s(y) - c + \alpha {U_+^2 \over U_s'(0)} + a Ai \Bigl( \gamma (y - y_c),2 \Bigr) +  O(\nu^{1/2} ).
\eeq
The corresponding horizontal and vertical velocities $u_{lin}$ and $v_{lin}$ are given by
\beq \label{vlin}
u_{lin}(y) = \partial_y \psi_{lin} = U_s'(y) +\gamma a Ai \Bigl( \gamma (y - y_c),1 \Bigr) + O(\nu^{1/4}),
\eeq
and 
\beq \label{ulin} 
v_{lin}(y) = -i\alpha \psi_{lin}(y) = O(\nu^{1/4}).
\eeq
Note that $\gamma a = O(1)$. The first term in (\ref{vlin}) may be seen as a "displacement velocity", corresponding
to a translation of $U_s$. The second term is of order $O(1)$ and located near the critical layer.
 Note that the vorticity
\beq \label{check-use-number}
\omega_{lin}(y) = -(\partial_y^2 - \alpha^2) \psi_{lin}(y) 
= -U_s''(y) - \gamma^2 a Ai \Bigl( \gamma (y - y_c) \Bigr) + O(1)
\eeq
is large in the critical layer (of order $O(\gamma)$).

For large $y$, $\psi_{lin}$ decays like $e^{- \alpha y}$ and $u_{lin}$ and $v_{lin}$ like $\alpha e^{- \alpha y}$.
The decay of $\omega_{lin}$ is faster, like $e^{- \beta y}$.

\Remarks
This construction is sometimes called the "triple deck" since there are three spatial scales: $O(\nu^{1/4})$ (critical layer),
$O(1)$ (shear flow) and $O(\nu^{-1/4})$ (recirculation area, rotational free). 

Note that when $\alpha$ is of order $\nu^{1/4}$, the critical layer is near the boundary, however when $\alpha$ is larger, it is "detached" from the
boundary. It is centered at $y_c$ which is of order $c$, but of size $\gamma^{-1}$ which is smaller with respect to $c$ if $\alpha \gg \nu^{1/4}$.
In this case the stream function and the velocity exhibit four scales: the size $O(\gamma^{-1})$ of the critical layer,
its distance $y_c$, of order $c$, from the boundary, the typical size of change of $U_s$, namely $O(1)$ and also $O(\alpha^{-1})$ which
corresponds to a "recirculation" size.


\subsection{Boundary Green function \label{GreenBoundary}}


We now look for the Green function of Orr Sommerfeld, together with its boundary condition at $y = 0$, under the form
$$
G = G^{int} + G^b,
$$
where $G^{int}$ has been previously constructed.
We look for $G^b(x,y)$ of the form
\beq \label{bl}
G^b(x,y) = d_s(x) \phi_{s,-}(y)  + d_f(x) {\phi_{f,-}(y)  \over \phi_{f,-}(0)}.
\eeq
We have the following result.

\begin{theorem} \label{boundG2}
We have
$$
d_s =  { \phi_{f,-}(0)  \over W^{-1}[\phi_{s,-},\phi_{f,-}] } O \Bigl( { \gamma \over \eps \mu^2(x)} \Bigr) 
= O \Bigl( { 1 \over \eps  \mu^2(x) | c - c(\alpha,\nu) |} \Bigr)
$$
and
$$
d_f =  {  \phi_{f,-}(0)  \over W^{-1}[\phi_{s,-},\phi_{f,-}] } O \Bigl( {1  + | \gamma c| \over \eps \mu^2(x)} \Bigr)
= O \Bigl( {1 + | \gamma c|  \over \eps \gamma \mu^2(x) | c - c(\alpha,\nu) |} \Bigr) .
$$
\end{theorem}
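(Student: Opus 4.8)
The strategy is to determine $G^b$ from the two boundary conditions at $y=0$, which turns the whole statement into a $2\times2$ Cramer inversion whose entries are boundary values of the fundamental solutions, and then to feed in the size estimates already available for $G^{int}$, for $\phi_{s,\pm},\phi_{f,\pm}$, and for the Wronskian $W[\phi_{s,-},\phi_{f,-}]$.

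\emph{Reduction to a $2\times2$ system.} Since $G^{int}$ already solves $Orr_{\alpha,c,\nu}G^{int}=\delta_x$ with the right behaviour at $+\infty$, the correction $G^b(x,\cdot)$ must be a decaying solution of the \emph{homogeneous} Orr Sommerfeld equation, hence a combination of $\phi_{s,-}$ and $\phi_{f,-}$, which is exactly the ansatz (\ref{bl}). Imposing $G(x,0)=\partial_yG(x,0)=0$ on $G=G^{int}+G^b$ gives
\beq
\begin{pmatrix}\phi_{s,-}(0)&1\\ \partial_y\phi_{s,-}(0)&\partial_y\phi_{f,-}(0)/\phi_{f,-}(0)\end{pmatrix}\begin{pmatrix}d_s\\ d_f\end{pmatrix}=-\begin{pmatrix}G^{int}(x,0)\\ \partial_yG^{int}(x,0)\end{pmatrix}.
\eeq
Writing $W[f,g]=fg'-f'g$, the determinant of this matrix equals $W[\phi_{s,-},\phi_{f,-}](0)/\phi_{f,-}(0)$, so Cramer's rule yields the exact identities
\beq
d_s=-\frac{W[G^{int}(x,\cdot),\phi_{f,-}](0)}{W[\phi_{s,-},\phi_{f,-}](0)},\qquad d_f=-\frac{\phi_{f,-}(0)\,W[\phi_{s,-},G^{int}(x,\cdot)](0)}{W[\phi_{s,-},\phi_{f,-}](0)} .
\eeq
This already produces the prefactor $\phi_{f,-}(0)\,W[\phi_{s,-},\phi_{f,-}]^{-1}$ in the statement; what remains is to bound the two Wronskians $W[G^{int}(x,\cdot),\phi_{f,-}](0)$ and $W[\phi_{s,-},G^{int}(x,\cdot)](0)$, which will give the two $O(\cdot)$ factors.

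\emph{Inserting the interior Green function and estimating the boundary Wronskians.} For $y<x$, hence in a neighbourhood of $y=0$, the exact interior Green function from the second construction of section~$6$ has the form $G^{int}(x,y)=a_+(x)\phi_{s,+}(y)+b_+(x)\phi_{f,+}(y)/\phi_{f,+}(x)$, with $|a_+(x)|\lesssim e^{-|\alpha|x}/(\eps\mu^2(x))$ and $|b_+(x)|\lesssim 1/(\eps\mu^3(x))$ by Theorem~\ref{boundG}. By bilinearity, $W[G^{int}(x,\cdot),\phi_{f,-}](0)$ and $W[\phi_{s,-},G^{int}(x,\cdot)](0)$ become $a_+(x)$ and $b_+(x)/\phi_{f,+}(x)$ times the boundary Wronskians $W[\phi_{s,+},\phi_{f,-}](0),\ W[\phi_{f,+},\phi_{f,-}](0)$, respectively $W[\phi_{s,-},\phi_{s,+}](0),\ W[\phi_{s,-},\phi_{f,+}](0)$. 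These are evaluated from the boundary expansions established earlier: for the slow solutions $\phi_{s,+}(0)=O(1)$, $\partial_y\phi_{s,+}(0)=O(|\log\gamma|)$, and, in the normalisation $\partial_y\phi_{s,-}(0)=1$ of the proof of Proposition~\ref{uniqueness}, $\phi_{s,-}(0)=-c\,\Psi(\alpha,c)=O(c)$ (Proposition~\ref{lem-exactphija} and (\ref{psippsi})); for the fast solutions the boundary values and first derivatives of $\phi_{f,\pm}$ at $0$ are given through primitives of Airy functions, equivalently the Tietjens function (Proposition~\ref{constructfast}, Appendix~\ref{appendix1}). Combining these one obtains $W[\phi_{s,+},\phi_{f,-}](0)$ and $W[\phi_{f,+},\phi_{f,-}](0)$ of size $O(|\gamma|)$ and $W[\phi_{s,-},\phi_{s,+}](0)=O(1)$, while the non-trivial one,
\beq
W[\phi_{s,-},\phi_{f,+}](0)=\phi_{s,-}(0)\,\partial_y\phi_{f,+}(0)-\partial_y\phi_{s,-}(0)\,\phi_{f,+}(0)=O(1+|c\gamma|),
\eeq
the $O(|c\gamma|)$ coming precisely from the product of the $O(c)$ boundary value of $\phi_{s,-}$ with the boundary derivative of the growing fast mode. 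Using $\mu(x)\gtrsim|\gamma|$ and $|\phi_{f,+}(0)/\phi_{f,+}(x)|\lesssim 1$ (because $\phi_{f,+}\sim Ci_a$ grows monotonically away from the boundary), the $b_+$ contributions are dominated by the $a_+$ ones, so that $|W[G^{int}(x,\cdot),\phi_{f,-}](0)|\lesssim |\gamma|/(\eps\mu^2(x))$ and $|W[\phi_{s,-},G^{int}(x,\cdot)](0)|\lesssim (1+|c\gamma|)/(\eps\mu^2(x))$, both up to a factor $\phi_{f,-}(0)$.

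\emph{Conclusion and main obstacle.} Substituting these two bounds into the Cramer identities, together with $|\phi_{f,-}(0)|=O(1)$ and the estimate $|W[\phi_{s,-},\phi_{f,-}]^{-1}|\lesssim 1/(|\gamma|\,|c-c(\alpha,\nu)|)$ from Proposition~\ref{uniqueness}, gives exactly the two displayed bounds for $d_s$ and $d_f$. The hard part is the previous step: one must carry out the Airy/Tietjens bookkeeping of the boundary data of $\phi_{f,\pm}$ at $y=0$ carefully (these quantities vary over several regimes as $z$ runs from $O(1)$ near $\alpha\sim\nu^{1/4}$ to large near $\alpha\sim\nu^{1/6}$), keep the normalisation of $\phi_{s,-}$ consistent between the Wronskian $W[\phi_{s,-},\phi_{f,-}]$ appearing in the denominator and the boundary values used in the numerators, and track how the smallness $\phi_{s,-}(0)=O(c)$ against the large boundary derivatives of the fast modes produces the $(1+|c\gamma|)$ amplification in $d_f$ — a factor that is genuinely active near the upper branch, where $|c\gamma|$ is unbounded.
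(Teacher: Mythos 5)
Your proposal is correct and follows essentially the same route as the paper: set up the $2\times2$ linear system from the two boundary conditions at $y=0$, invert it, and insert the size estimates for $a_+,b_+$, for the boundary data of $\phi_{s,\pm},\phi_{f,\pm}$, and for $W[\phi_{s,-},\phi_{f,-}]^{-1}$ from Proposition~\ref{uniqueness}. The only difference is presentational: you invert via Cramer's rule and phrase the numerators as boundary Wronskians $W[G^{int}(x,\cdot),\phi_{f,-}](0)$ and $W[\phi_{s,-},G^{int}(x,\cdot)](0)$, whereas the paper writes out $M^{-1}$ and a matrix $Q$ with $(a_+,b_+)$ on the right and multiplies them; the two bookkeepings are identical, and both rely on the same inputs ($\phi_{s,-}(0)=O(c)$, $\partial_y\phi_{f,\pm}(0)=O(\gamma)$, unit slow Wronskian, $\mu(x)\gtrsim|\gamma|$, boundedness of $\phi_{f,+}(0)/\phi_{f,+}(x)$) to produce the $O(\gamma)$ and $O(1+|\gamma c|)$ amplifications.
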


\Remark
Note that $G^b$ is singular when $W[\phi_{s,-},\phi_{f,-}] = 0$, namely at eigenvalues
of the Orr Sommerfeld operator, which is not the case of $G^{int}$ which is well defined everywhere.

\begin{proof}
Let us look for $d_s$ and $d_f$ such that
\beq \label{Greenb1}
G^{int}(x,0) + G^b(0) = 
\partial_y G^{int}(x,0) + \partial_y G^b(0) =  0.
\eeq
Let
$$
M = \left( \begin{array}{cc} \phi_{s,-}(0) & 1 \cr
\partial_y \phi_{s,-}(0) & \partial_y \phi_{f,-}(0) / \phi_{f,-}(0) \cr \end{array} \right) .
$$
Then (\ref{Greenb1}) can be rewritten as
$$
M d = - \Bigl( G^{int}(x,0), \partial_y G^{int}(x,0) \Bigr) 
$$
where $d = (d_s,d_f)$. Note that
$$
(G^{int}(x,0), \partial_y G^{int}(x,0))  = Q (a_+,b_+)
$$
with
$$
Q =  \left( \begin{array}{cc}
 \phi_{s,+}(0)  & \phi_{f,+}(0) / \phi_{f,+}(x) \cr
 \partial_y \phi_{s,+}(0)   & \partial_y \phi_{f,+}(0) / \phi_{f,+}(x) \cr
\end{array} \right) 
= \left( \begin{array}{cc}
 O(1)  & O(1) \cr
 O(1)   & O(\gamma) \cr
\end{array} \right) 
$$
where we have used that $\phi_{f,+}(0) / \phi_{f,+}(x)$ is bounded.
By construction we have
\beq \label{defid}
d = -   M^{-1} Q (a_+,b_+) 
\eeq
where
$$
M^{-1} = {\phi_{f,-}(0) \over W[\phi_{s,-},\phi_{f,-}](0) }   \left( \begin{array}{cc} \partial_y \phi_{f,-}(0) / \phi_{f,-}(0) & - 1 \cr
- \partial_y \phi_{s,-}(0)& \phi_{s,-}(0)  \end{array} \right).
$$
We recall that $\phi_{f,-}(0)$ is of order $O(1)$. Of course $M^{-1}$ is singular when $W[\phi_{s,-},\phi_{f,-}](0)  = 0$, namely near eigenvalues
of Orr Sommerfeld.

 As $\partial_y \phi_{f,-}(0) / \phi_{f,-}(0)$ is of order $O(\gamma)$ and $\phi_{s,-}(0)$ is of order $O(1)$, we have
$$
M^{-1} =  {\phi_{f,-}(0) \over W[\phi_{s,-},\phi_{f,-}] }   \left( \begin{array}{cc} O(\gamma) & - 1 \cr
O(1) & \phi_{s,-}(0) \end{array} \right) .
$$
As a consequence, as $\gamma \phi_{s,-}(0)$ is of order $\gamma c$,
$$
M^{-1} Q = {\phi_{f,-}(0) \over W[\phi_{s,-},\phi_{f,-}] }
\left( \begin{array}{cc} O(\gamma) & O(\gamma) \cr
O(1) & O(\gamma c) + O(1) \end{array} \right),
$$
thus, using (\ref{appGreen0}) together with $\gamma \lesssim \mu(x)$, we have
$$
d_s =   {\phi_{f,-}(0) \over W[\phi_{s,-},\phi_{f,-}]} O \Bigl( { \gamma \over \eps \mu^2(x)} \Bigr)
$$
and
$$
d_f =  {\phi_{f,-}(0)   \over W[\phi_{s,-},\phi_{f,-}]} O \Bigl( {1 + | \gamma c|  \over \eps \mu^2(x)}   \Bigr) .
$$
This ends the description of the Green function.
\end{proof}


\section{Adjoint of Orr Sommerfeld operator \label{adjointNS}}


The adjoint of Orr Sommerfeld operator with respect to the scalar product
$$
(\psi_1,\psi_2) = \int \psi_1 \bar \psi_2 dx .
$$
 is given by
\beq \label{Orrad}
Orr^t_{c,\alpha,\nu}(\psi) :=  (\partial_z^2 - \alpha^2) (U_s - \bar c)   \psi 
- U_s''  \psi  
+ { \nu \over i \alpha}   (\partial_z^2 - \alpha^2)^2 \psi,
\eeq
with boundary conditions $\psi(0) = \partial_z \psi(0) = 0$.

Let $\psi_1$ be an eigenvector of $Orr$ with corresponding eigenvalue $\lambda_1$ and
let $\psi_2$ be an eigenvector of $Orr^t$ with corresponding eigenvalue $\lambda_2$.
Then, multiplying $Orr(\psi_1) = \lambda_1 \psi_1$ by $\bar \psi_2$, $Orr^t(\psi_2) = \lambda_2 \psi_2$ by $\bar \psi_1$ and
combining both equalities, we get
\beq \label{ortho}
(\lambda_1 - \lambda_2) \int \psi_1 (\partial_z^2 - \alpha^2) \bar \psi_2 dx = 0.
\eeq
We can therefore normalize the eigenvectors $\psi_1$ and $\psi_2$ such that
\beq \label{ortho2} 
\int \psi_1 (\partial_z^2 - \alpha^2) \bar \psi_2 dx = \delta_{\lambda_1 = \lambda_2}.
\eeq
Let $v_1 = \nabla^\perp \psi_1$ and $v_2 = \nabla^\perp \psi_2$, then we get
\beq \label{ortho3}
\int v_1 \bar v_2 dx =  \delta_{\lambda_1 = \lambda_2}.
\eeq
We refer to \cite{Bian1} and  \cite{Reid}  for a formal study of this adjoint operator.
All the computations done in \cite{Bian1} can be justified using the methods developed in the current paper. 
We just recall the main results.

First the adjoint of  operator ${\cal A}$ is simply $\partial_y^2 \circ Airy$. Thus the fast modes $\phi_{f,\pm}^t$ are well approximated 
by the Airy functions $Ai$ and $Ci$. 
Moreover, $\phi_{s,-}^t(y)$ is a perturbation of $\psi_{-,\alpha}^t$ which is approximately constant. This leads to 
$$
\phi_{s,-}^t(y) = e^{- \alpha y} - f(y_c) \psi(y) - g(y) + O(\gamma^{-1})
$$
where
$$ 
f(y) = - 2 \alpha Ray_{\alpha,c}^{-1} \Bigl( (U_s - c) U_s' e^{- \alpha y} \Bigr) ,
$$
$$
g(y) = {f(y) - f(y_c) \over U_s(y) - c} 
$$
and
$$
\psi(y) = Airy^{-1} e^{-\alpha y} .
$$
Thus the eigenmode $\psi_{s,-}^t$, used in the projection of the initial data, is completely explicit.


\section{Green functions}



\subsection{Full description of $G_{\alpha,c,\nu}$ \label{fullGreen}}


Let us combine the previous results to fully describe the Green function of Orr Sommerfeld equation.

\begin{theorem}
The Green function $G_{\alpha,c,\nu}$ of Orr Sommerfeld equation is of the form
$$
G_{\alpha,c,\nu} = G^{int} + G^b
$$
where
\beq \label{int}
G^{int}(x,y) = a_+(x)  \phi_{s,+}(y) 
+ {b_+(x) }  {\phi_{f,+}(y) \over \phi_{f,+}(x)} 
\quad \hbox{for} \quad y < x,
\eeq
$$
G^{int}(x,y) = a_-(x)  \phi_{s,-}(y)
+ {b_-(x)} {\phi_{f,-}(y) \over \phi_{f,-}(x)} 
\quad \hbox{for} \quad y  > x,
$$
$$
G^b(x,y) = d_s(x) \phi_{s,-}(y)  + d_f(x) {\phi_{f,-}(y)  \over \phi_{f,-}(0)},
$$
 with
$$
a_\pm(x) = O \Bigl( {1 \over \eps \mu^2(x)} \Bigr), \qquad 
b_\pm (x)= O \Bigl( {1 \over \eps \mu^3(x)} \Bigr),
$$
$$
d_s = O \Bigl( { 1 \over \eps  \mu^2(x) | c - c(\alpha,\nu) |} \Bigr)
$$
and
$$
d_f = O \Bigl( {1 + | \gamma c|  \over \eps \gamma \mu^2(x) | c - c(\alpha,\nu) |} \Bigr) .
$$
\end{theorem}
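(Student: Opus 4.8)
This statement merely assembles the two constructions of Sections~6 and~7, so the proof is a matter of collecting the pieces rather than proving anything new. The plan is first to place the interior part, then to add the boundary corrector, and finally to argue uniqueness.

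I would start from the exact interior Green function $G^{int}$ produced in Section~6. Recall that it is built so that $Orr_{\alpha,c,\nu} G^{int} = \delta_x$ together with the decay conditions at $+\infty$, but with no constraint imposed at $y=0$; by the "second construction" of $G^{int}$ at the end of Section~6 it has exactly the displayed piecewise form, with the genuine solutions $\phi_{s,\pm}$ and $\phi_{f,\pm}$ in place of the approximate ones, and Theorem~\ref{boundG} gives $a_\pm(x)=O(1/\eps\mu^2(x))$ and $b_\pm(x)=O(1/\eps\mu^3(x))$ (in fact the sharper weighted bounds $a_\pm(x)=O(e^{\mp|\alpha|x}/\eps\mu^2(x))$ of Theorem~\ref{boundG}, of which we retain only the unweighted form here). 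That settles the interior part completely.

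Next I would add the boundary corrector $G^b$. Since $\phi_{s,-}$ and $\phi_{f,-}$ are genuine solutions of $Orr_{\alpha,c,\nu}\,\phi=0$ that decay at $+\infty$, any $G^b(x,y)=d_s(x)\phi_{s,-}(y)+d_f(x)\phi_{f,-}(y)/\phi_{f,-}(0)$ also solves the homogeneous equation and decays at infinity, so $G=G^{int}+G^b$ still solves $Orr_{\alpha,c,\nu}G=\delta_x$ with the correct behaviour at $+\infty$. The two free functions $d_s,d_f$ are then pinned down by $G(x,0)=\partial_yG(x,0)=0$, which is precisely the $2\times 2$ linear system $Md=-(G^{int}(x,0),\partial_yG^{int}(x,0))$ solved in the proof of Theorem~\ref{boundG2}; since $M^{-1}$ is proportional to $1/W[\phi_{s,-},\phi_{f,-}](0)$, substituting $\phi_{f,-}(0)=O(1)$, $\partial_y\phi_{f,-}(0)/\phi_{f,-}(0)=O(\gamma)$, $\phi_{s,-}(0)=O(c)$ and the interior bounds on $(a_+,b_+)$ yields exactly the claimed expressions for $d_s$ and $d_f$, and Proposition~\ref{uniqueness} turns $1/|W[\phi_{s,-},\phi_{f,-}]|$ into $1/(\gamma|c-c(\alpha,\nu)|)$, which is the source of the $|c-c(\alpha,\nu)|^{-1}$ factor.

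Finally, away from the eigenvalues $c(\alpha,\nu)$ the Orr Sommerfeld operator with its four side conditions (two at $y=0$, decay at $+\infty$) is invertible, hence its Green function is unique and therefore equals $G^{int}+G^b$; at an eigenvalue the resolvent is singular, which is exactly the simple pole in $c-c(\alpha,\nu)$ carried by $G^b$, while $G^{int}$ remains regular there. The only delicate inputs are not in this final assembly but upstream: that the genuine slow and fast solutions satisfy the same estimates as their approximate counterparts (the Propositions of Sections~5 and~6), and that $1/W[\phi_{s,-},\phi_{f,-}]$ is controlled uniformly near the unstable branch, i.e. Proposition~\ref{uniqueness}, which in the regime $\alpha\sim\nu^{1/4}$ itself rests on the numerical non-vanishing of $\partial_c W$. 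Granting those, the theorem follows by direct substitution.
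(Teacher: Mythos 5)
Your proposal is correct and is essentially what the paper does: the theorem of Section~9.1 is a summary statement that assembles the exact interior Green function $G^{int}$ from the ``second construction'' at the end of Section~6 (with bounds from Theorem~\ref{boundG}), the boundary corrector $G^b$ from Theorem~\ref{boundG2} whose $2\times 2$ system determines $d_s,d_f$, Proposition~\ref{uniqueness} to convert $|W[\phi_{s,-},\phi_{f,-}]|^{-1}$ into $(\gamma|c-c(\alpha,\nu)|)^{-1}$, and the usual uniqueness of the resolvent away from eigenvalues. One small imprecision worth noting: you say the bound $a_\pm(x)=O(1/(\eps\mu^2(x)))$ is the ``unweighted form'' of Theorem~\ref{boundG}'s $a_\pm(x)=O(e^{\mp|\alpha|x}/(\eps\mu^2(x)))$; that reasoning is valid for $a_+$ (the weight $e^{-|\alpha|x}\le 1$) but not for $a_-$ (where $e^{+|\alpha|x}\ge 1$, so dropping the weight is a strengthening, not a weakening) — this mismatch is already present in the paper's own summary statement and is ultimately harmless because what enters the Green-function estimate is the product $a_-(x)\phi_{s,-}(y)$ with $y>x$, in which the growing and decaying exponentials cancel.
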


\Remark
In the particular case where $\alpha$ is of order $\nu^{1/4}$, then 
$$
a_\pm(x) = O(\nu^{-1/4}), \qquad b_\pm(x) = O(1)$$
and
$$
d_s = O (  \nu^{-1/4}  | c - c(\alpha,\nu) |^{-1} ), \qquad
d_f = O (   | c - c(\alpha,\nu) |^{-1} ).
$$


\subsection{Proof of Theorem \ref{maintheo} } 


As for Theorem \ref{growthRayleigh}, we study the Green function of the linearized Navier Stokes equation, namely the solution
$G_\alpha(t,x,y)$ to
$$
\partial_t \omega + L_{NS}^\omega \omega = 0 
$$
with initial data $\omega(0,\cdot) = \delta_x$. In this equation, $L_{NS}^\omega$ is the linearized Navier Stokes operator in vorticity formulation,
namely
$$
L_{NS}^\omega = \hbox{curl} \, L_{NS} \nabla^{\perp} \Delta^{-1} .
$$
As $L_{NS}^\omega$ is a sectorial operator, we can use Dunford's formula to express the solution, namely
\beq \label{Dun1}
\psi(t) = {1 \over 2 i \pi } \int_\Gamma e^{\lambda  t} OS_{\alpha,c,\nu}^{-1} \, \omega_0 \, d\lambda ,
\eeq
where the contour is "on the right" of the spectrum.
We also recall that
\beq \label{invv1}
OS_{\alpha,c,\nu}^{-1} \, \omega_0 = \int G_{\alpha,c,\nu}(x,y) \omega_0(x) \, dx ,
\eeq
where $G_{\alpha,c,\nu}$ is the Green function detailed in the previous section.
Thus, combining (\ref{Dun1}) and (\ref{invv1}), we get
\beq \label{invv2}
G_\alpha(t,x,y) = - {\alpha \over 2  \pi} \int_\Gamma e^{i \alpha c t}  G_{\alpha,c,\nu}(x,y) \, dc ,
\eeq
where $\Gamma$ is now a contour "above" the spectrum.

We note that $G_{\alpha,c,\nu}$ is holomorphic except at its pole, namely at the eigenvalue $c(\alpha,\nu)$.
We can thus move the contour $\Gamma$ downwards in the region $\Im c < 0$. 
Using Theorem \ref{boundG}, we can move $\Gamma$ till $\Im c$ reaches 
$- C_0 | \gamma |^{-1}$, for some small positive constant $C_0$. 
Doing so we leave aside the only  pole of $G_{\alpha,c,\nu}$ at $c(\alpha,\nu)$ such that
$\Re c(\alpha,\nu) \ge - C_0 \gamma^{-1}$.
This leads to
\beq \label{invv3}
G_\alpha(t,x,y) = - i \alpha  Res\Bigl( G_{\alpha,c,\nu}(x,y), c(\alpha,\nu) \Bigr) e^{ \lambda(\alpha,\nu) t}
- {\alpha \over 2  \pi} \int_\Gamma e^{i \alpha c t}  G_{\alpha,c,\nu}(x,y) \, dc ,
\eeq
where now $\Gamma$ is the contour
$$
\Gamma =  \Bigl\{ - A - |\gamma|^{-1} i + (1 + i) \rit_- \Bigr\} \cup
[-A - |\gamma|^{-1} i , A - |\gamma|^{-1} i ]
\cup   \Bigl\{ A - |\gamma|^{-1} i + (1 + i) \rit_+ \Bigr\},
$$
and where $Res$ stands for the residue.
We define 
$$
\phi(x,y) =  - i \alpha Res\Bigl( G_{\alpha,c,\nu}(x,y), c(\alpha,\nu) \Bigr).
$$
As $G^{int}$ has no pole, we have in fact
$$
\phi_{\alpha,\nu}(x,y) =  - i \alpha Res\Bigl( G^b(x,y), c(\alpha,\nu) \Bigr),
$$
leading to
$$
P_{\alpha,\nu} v_{\alpha,0}(x) = - i \alpha \int_\Gamma \phi_{\alpha,\nu}(x,y) \omega_{\alpha,0} (x)  \, dx,
$$
which can be rewritten using the eigenvalue of the dual operator. 

The real part of $i \alpha c$ on that contour $\Gamma$ is  
$$
\theta(\nu) = | \alpha  \gamma^{-1} | \sim \nu^{1/3} \alpha^{2/3}.
$$
Choosing $C_0$ large enough, we may assume that $| c - c(\alpha,\nu)| \gtrsim | \gamma |^{-1}$.
We split $G_{\alpha,c,\nu}(x,y)$ into $G^{int}$ and $G^b$ and further split $G^{int}$.
We have
$$
\int_\Gamma G^b(x,y) \, dc = {\alpha \over 2 \pi} \int_\Gamma e^{i \alpha c t} d_s(x) \phi_{s,-}(y) \, dc
+  {\alpha \over 2 \pi} \int_\Gamma e^{i \alpha c t} d_f(x) \phi_{f,-}(y) \, dc . 
$$
We recall that $d_s$, $d_f$, $\phi_{s,-}$ and $\phi_{f,-}$ all depend on $c$.
We note that $\phi_{s,-}(y)$ is bounded by $e^{- \alpha y}$, $\phi_{f,-}(y)$ is bounded by $e^{- | \gamma | y}$ 
and $d_s(x)$ and $d_f(x)$ are given by Theorem \ref{boundG}, which gives
$$
\Bigl| \int_\Gamma G^b(x,y) \, dc \Bigr| \lesssim {| \alpha \gamma | \over | \eps \mu^2(x) | } e^{- \theta(\nu) t}  e^{- |\alpha| y} .
$$
We now turn to $G^{int}$. If $y < x$ 
$$
\int_\Gamma G^{int}(x,y) \, dc = {\alpha \over 2 \pi} \int_\Gamma e^{i \alpha c t} a_+(x) \phi_{s,+}(y) \, dc
+  {\alpha \over 2 \pi} \int_\Gamma e^{i \alpha c t} b_+(x) {\phi_{f,+}(y) \over \phi_{f,+}(x)} \, dc ,
$$
thus
$$
\Bigl| \int_\Gamma G^{int}(x,y) \, dc \Bigr| \lesssim {| \alpha  | \over | \eps \mu^2(x) | } e^{- \theta(\nu) t}  e^{+|\alpha| y} .
$$
The result is similar if $y > x$ with $e^{- | \alpha | y}$ instead of $e^{+ | \alpha | y}$.

We observe that 
$$
{| \alpha \gamma | \over | \eps \mu^2(x) |} \le { | \alpha \gamma | \over | \eps \gamma^2 |} \le | \alpha \gamma^2|.
$$
This ends the proof of Theorem \ref{maintheo}.

\medskip

\Remark
The vorticity enjoys a better decay in its slow part, but reveals the vorticity of the Airy solution, which leads to 
$$
\Bigl| \int_\Gamma (\partial_y^2 - \alpha^2) G^b(x,y) \, dc \Bigr| \lesssim {| \alpha | \over | \eps \mu(x) |} 
e^{- \alpha | \gamma | t} \Bigl[  e^{- \beta y} + \mu^2(x)  e^{- | \gamma| | y  |} \Bigr].
$$


\section{Appendix \label{appendix}}



\subsection{Airy and Tietjens functions \label{appendix1}}


Let $Ai$ and $Bi$ be the classical Airy functions. 
We recall that, as $x$ goes to $\pm \infty$, for $|\arg(x)| < \pi$, 
\beq \label{asymptAi}
Ai(x) \sim {1 \over 2 \sqrt{\pi}}  { e^{- 2 x^{3/2}  / 3} \over x^{1/4}} \Big( 1 + O(|x|^{-3/2})\Big)
\eeq
 and, for $| \arg(x) | < \pi/3$,
\beq \label{asymptBi}
Bi(x) \sim {1 \over  \sqrt{\pi}}  { e^{2 x^{3/2}  / 3} \over x^{1/4}} \Big( 1 + O(|x|^{-3/2})\Big).
\eeq
The behavior of $Ai(x)$ thus depends on the argument of $x$. For instance if $x$ is real then $Ai(x)$ decays more than
exponentially fast as $x \to + \infty$. On the contrary, $Ai(x)$ is oscillatory as $x \to - \infty$. In this paper we are interested of
$x$ with argument $\pi / 6$ or $- 5 \pi / 6$. If $\arg(x) = \pi / 6$, then $Ai(x) \to 0$ as $x \to + \infty$ in an exponentially and oscillatory way.
On the contrary, if $\arg(x) = - 5 \pi / 6$ then $Ai(x)$ diverges as $x \to - \infty$. It is oscillatory and its modulus goes to $+ \infty$ in an exponential way.
We refer to \cite{Reid} for the discussion of the associated notion of Stokes and anti-Stokes lines.

Let us introduce the following combination of $Ai$ and $Bi$
\beq \label{defiCi}
Ci = - i \pi (Ai + i Bi).
\eeq
We also introduce the first and second primitives of $Ai$ and $Ci$, denoted by $Ai(x,1)$, $Ai(x,2)$, $Ci(x,1)$, $Ci(x,2)$.
For large $x$, we have, using the results of Appendix \ref{appendix3},
\beq \label{asymptAi1}
Ai(x,1) \sim - {1 \over 2 \sqrt{\pi}}  { e^{- 2 x^{3/2}  / 3} \over  x^{3/4}} \Big( 1 + O(|x|^{-3/2})\Big),
\eeq
\beq \label{asymptAi2}
Ai(x,2) \sim {1 \over 2 \sqrt{\pi}}  { e^{- 2 x^{3/2}  / 3} \over x^{5/4}} \Big( 1 + O(|x|^{-3/2})\Big)
\eeq
and similarly for $Ci(x,1)$ and $Ci(x,2)$. In particular, for large $x$, we have
\beq \label{asymptAi2}
{Ai(x) \over Ai(x,1)} \sim x^{1/2}, \qquad
{Ai(x) \over Ai(x,2)} \sim x, \qquad 
{Ai'(x) \over Ai(x,2)} \sim x^{3/2}.
\eeq
We will also need to compare $Ai(x)$ and $Ci(x)$ together with their integrals. Let $Ai^{abs}(x,1)$ be the primitive of $| Ai(x) |$ defined by
$$
Ai^{abs}(x,1) = \int_x^{+\infty} | Ai(x e^{i \pi / 6}) | \, dx
$$
and let
$$
Ci^{abs}(x,1) = \int_{- \infty}^x | Ci(x e^{i \pi / 6}) | \, dx .
$$
Then
\beq \label{compareintegrale}
Ai^{abs}(x,1)  \lesssim  x^{-1/2} |Ai(x)| \qquad Ci^{abs}(x,1) \lesssim x^{-1/2} | Ci(x) | .
\eeq
We now turn to the study of the so called Tietjens function.
Following classical notations (see \cite{Reid}, page $273$), we introduce as previously
$$
z = \Bigl( {\alpha U_c' \over \nu} \Bigr)^{1/3} y_c, \qquad \xi_1 = - i^{1/3} z
$$
and the Tietjens function defined by 
$$
Ti(z) = {Ai(\xi_1,2) \over \xi_1 Ai(\xi_1,1) } .
$$
A detailed study of Tietijens function may be found in \cite{Reid}, pages $273$ to $276$.
We recall the corresponding results here. 
In particular, as $|z| \to + \infty$ with $| \arg(z) | < \pi$, using the asymptotic expansion of $Ai$, we obtain
\beq \label{Tietjensinfini}
Ti(z) = - {e^{i \pi / 4} \over z^{3/2}} \Bigl( 1 + {5 \over 4} {e^{i \pi /4} \over z^{3/2}}
+ {151 \over 32} {e^{i \pi /2} \over z^3} + ... \Bigr) .
\eeq
Note that the leading term of (\ref{Tietjensinfini}) may be obtained using (\ref{asymptAi1}) and (\ref{asymptAi2}).

Near $z = 0$, $Ti(z)$ has a pole
\beq \label{Tietjenszero}
Ti(z) = 3 Ai'(0) e^{5 i \pi/6} {1 \over z} + 1 - {3 \sqrt{3} \over 2 \pi} + O(z) .
\eeq
Moreover, as $z \to 0$, $z$ being real, 
$$
\Re Ti(z) + \sqrt{3} \Im Ti(z) \to 1 - {3 \sqrt{3} \over 2 \pi} .
$$
Numerically we observe that, when $z$ is real and positive, $\Im Ti(z)$ vanishes as $| z| \to + \infty$ and also when $z = z_0$ with $z_0 \sim 2.297$.
Near that point,
\beq \label{Tietjensother}
Ti(z) = Ti(z_0) + Ti'(z_0) (z - z_0) + O( (z - z_0)^2)
\eeq
where 
$$
Ti(z_0) \sim 0.5645, \qquad Ti'(z_0) \sim -0.1197 + 0.2307 i.
$$


\subsection{Approximate solvers \label{appendix2}}


In this paper, we often encounter the following situation. We want to solve the equation
\beq \label{eqA}
{\cal A} \phi = f
\eeq
but only know how to solve an approximate equation ${\cal A}_1 \phi = f$.
Let ${\cal A}_2 = {\cal A} - {\cal A}_1$. Let $G_1$ be the Green function of ${\cal A}_1$, namely be the solution to
$$
{\cal A}_1 G_1(x,\cdot) = \delta_x .
$$
Then an approximate solver ${\cal S}_1$ of (\ref{eqA}) is given by
$$
\phi_1(y) = {\cal S}_1 f(y) = \int G_1(x,y) f(x) \, dx .
$$
Note that ${\cal A}_1 \phi_1 = f$, therefore
$$
{\cal A}\phi_1 = {\cal A}_2 \phi_1 + f.
$$
Let us define  the error of approximation ${\cal R}$ by 
$$
{\cal R} f = {\cal A}_2 \phi_1 =  {\cal A}_2 \, {\cal S}_1 \, f.
$$
In our case, ${\cal R}$ will have a kernel $R(x,y)$, namely
$$
({\cal R} f) (y) = \int R(x,y) f(x) \, dx .
$$

\begin{proposition} \label{prophypo2}
Assume either that
\beq \label{hypo1}
(A1) \qquad \sup_y \int | R(x,y) | \, dx < 1 
\eeq
or that there exists some positive function $\theta$ such that
\beq \label{hypo2} 
(A2) \qquad  \int |  R(x,y) |  \theta(x) \, dx \le k \theta(y)
  \eeq
  for some $k < 1$ and for every $y$.
  Then under assumption (A1) or (A2), there exists a solution to  (\ref{eqA}).
  \end{proposition}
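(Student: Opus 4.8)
The plan is to reduce (\ref{eqA}) to a fixed-point problem for ${\cal R}$ and to invert $\mathrm{Id}+{\cal R}$ by a Neumann series. First I would look for the solution in the form $\phi = {\cal S}_1 g$ for an unknown $g$ taken in the same function class as $f$. Applying ${\cal A}$ and using ${\cal A}_1 {\cal S}_1 = \mathrm{Id}$ together with ${\cal R} = {\cal A}_2\,{\cal S}_1$, one finds ${\cal A}\phi = {\cal A}_1{\cal S}_1 g + {\cal A}_2 {\cal S}_1 g = g + {\cal R}g$, so that solving ${\cal A}\phi = f$ is \emph{equivalent} to solving $(\mathrm{Id}+{\cal R})g = f$. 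It therefore suffices to check that ${\cal R}$ is a strict contraction on a suitable Banach space, and then to set $g = \sum_{n\ge 0}(-1)^n {\cal R}^n f$, $\phi = {\cal S}_1 g$; equivalently, and as in the iterative schemes used repeatedly in the body of the paper, one starts from $\phi_0 = {\cal S}_1 f$, corrects by $\phi_{n+1} = -{\cal S}_1({\cal A}_2\phi_n)$, and sets $\phi = \sum_{n\ge 0}\phi_n$.

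Under hypothesis (A1) I would work in $L^\infty$. For bounded $h$,
$$
|({\cal R}h)(y)| \;\le\; \int |R(x,y)|\,|h(x)|\,dx \;\le\; \Bigl(\sup_y \int |R(x,y)|\,dx\Bigr)\,\|h\|_{L^\infty},
$$
so by (\ref{hypo1}) the operator norm of ${\cal R}$ on $L^\infty$ is $<1$. Under hypothesis (A2) I would instead use the weighted space $L^\infty_\theta$ of functions $h$ with $\|h\|_\theta := \sup_y |h(y)|/\theta(y) < +\infty$; then
$$
|({\cal R}h)(y)| \;\le\; \int |R(x,y)|\,\theta(x)\,\frac{|h(x)|}{\theta(x)}\,dx \;\le\; \Bigl(\int |R(x,y)|\,\theta(x)\,dx\Bigr)\|h\|_\theta \;\le\; k\,\theta(y)\,\|h\|_\theta,
$$
so by (\ref{hypo2}) the operator norm of ${\cal R}$ on $L^\infty_\theta$ is $\le k<1$. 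In either case the Neumann series converges absolutely: the bound on $\|{\cal R}\|$ propagates to the iterated kernel $R_n(x,y) = \int R(z,y)\,R_{n-1}(x,z)\,dz$ by induction (Fubini), so $\|{\cal R}^n\| \le \|{\cal R}\|^n$. One obtains $g$, hence $\phi = {\cal S}_1 g$, together with the a priori bound $\|\phi\| \lesssim \|{\cal S}_1 f\| \lesssim \|f\|$ in the corresponding norm.

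The argument is short, and the main (mild) difficulty is bookkeeping rather than analysis: one must make sure that the data $f$ actually lies in the space on which ${\cal R}$ contracts — in case (A2) this is precisely the requirement $|f|\lesssim\theta$, automatically satisfied in the applications where $\theta(x) = e^{-\sigma x}$ and $f\in X_\sigma$ — and that the kernel bounds in (A1)/(A2) are \emph{uniform} in the parameters $\alpha$, $\nu$, $c$, so that the fixed point and the resulting estimate on $\phi$ come with constants independent of these parameters. In the applications one also checks that ${\cal S}_1$ preserves the relevant holomorphic spaces on the pencil domains $\Gamma_{\beta_0,\beta_1}$ or $\Gamma_{\beta_0,\beta_1}(y_c)$, which holds because $G_1$ is holomorphic and the defining integral can be deformed onto any path through $y$ inside the domain.
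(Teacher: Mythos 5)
Your proposal is correct and follows the same iterative-correction / Neumann-series strategy as the paper, with the same use of (A1) to contract on $L^\infty$ and of (A2) to contract on the weighted space $L^\infty(\theta\,dx)$. You are in fact slightly more careful than the paper's own sketch: writing $\phi = {\cal S}_1 g$ and solving $(\mathrm{Id}+{\cal R})g = f$ makes the alternating sign $g = \sum_{n\ge 0}(-1)^n {\cal R}^n f$ explicit, whereas the paper's iteration $f_n = {\cal R} f_{n-1}$, $\phi = \sum_n {\cal S}_1 f_n$ as literally written drops the $(-1)^n$ that is needed for ${\cal A}\phi = f$ to hold exactly.
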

  
  \begin{proof}
Assume that ${\cal R}$ is contractant in some space $X$, and that ${\cal S}_1$ is continuous from this space $X$ to another space $Y$
and introduce the iterative scheme
$$
\phi_n =  {\cal S}_1 f_n,
$$
$$
f_n = {\cal R} f_{n-1}
$$
starting with $f_0 = f$.
Then $f_n$ converges geometrically to $0$ in $X$ and  $\sum_n \phi_n$ converges in $Y$ to a solution $\phi$ of (\ref{eqA}).

The first assumption (A1) implies that ${\cal R}$ is contractant in $L^\infty$ and the second assumption (A2) that it is contractant in
$L^\infty(\theta \, dx)$ the space of functions $f$ such that $f(x) / \theta(x) \in L^\infty$.
\end{proof}

Let us give two examples of applications of this Proposition.


\subsubsection*{First example: contraction in $L^\infty$}


 Let $g(x)$ be a smooth function. To solve
\beq \label{ex1}
\eps^2 \partial_y^2 \phi - \phi + \eps g \phi = 0
\eeq
we introduce ${\cal A}_1 \phi = \eps^2 \partial_y^2 \phi - \phi$ and ${\cal A}_2 \phi = \eps g \phi$. Then
$G_1(x,y) = (2\eps)^{-1} e^{- | x - y | / \eps}$
and
$$
R(x,y) = ({\cal R} \delta_x) (y) = {g(y) e^{- | x - y | / \eps} \over 2} .
$$
Thus, if $g$ is bounded, $\| R(x,\cdot) \|_{L^1} \ll \eps$, and  ${\cal R}$ is contractant in $L^\infty$ provided $\eps$ is small enough. 
This strategy will be used to prove Proposition \ref{Airy0}.


\subsubsection*{Second example involving weights}


As a second example, we study
\beq \label{ex2}
\eps^2 \partial_y^4 \phi - \partial_y^2 \phi =  g \phi,
\eeq
which is a model for the equation solved in Proposition \ref{constructfast}.
We rewrite (\ref{ex2}) as
\beq \label{ex3}
\eps^2 \partial_y^2 \psi - \psi =  g \partial_y^{-2} \psi
\eeq
where $\psi = \partial_y^2 \phi$, and where $\partial_y^{-2} \psi$ is the second primitive, calculated from $+ \infty$.
The Green function for $\psi$ is $G_1$ and the error term is
$$
({\cal R} \delta_x)(y) = g(y) \partial_y^{-2} G_1(x,y) .
$$
Up to terms of order $O(\eps)$, $\partial_y^{-2} G_1(x,y)$ equals $(x - y)_+$. 

Let $\theta$ be a positive function, like $e^{- \beta y}$ for some positive $\beta$.
Let $X = L^{\infty}(\theta(x) dx)$ be the set of functions $f$ such that $f / \theta$ is bounded. Let $f \in X$. Then
$$
({\cal R}f) (y) = g(y)  \int \partial_y^{-2} G_1(x,y) f(x) \, dx.
$$
If we assume that, for some constant $k < 1$, we have, for any $y$,
\beq \label{contra}
I =  | g(y) | \int |  \partial_y^{-2} G_1(x,y) | \theta(x) \, dx \le k \theta(y),
\eeq
then ${\cal R}$ is a contraction in $X$ as desired.
In the case where $g(y)$ is bounded and $\theta(x) = e^{- \mu x}$, 
$$
I \le \| g \|_{L^\infty} \int_y^{+\infty} (x - y) e^{-\mu x} \, dx  \le {C \over \mu^2} e^{- \mu y} .
$$
Thus the assumption (\ref{contra}) is checked provided $\mu$ is large enough, which leads to the existence of a solution to (\ref{ex2}).
The key point of the proof is that the error is of zeroth order, and thus much smaller than the source itself.

 
 \subsection{Integrals of rapidly decaying functions \label{appendix3}}
 

 In the study of integrals of Airy functions we will use the following classical result.

 \begin{lemma}
 Let $h(x)$ be a given $C^2$ function such that $\Re h(x) \to 0$ as $x \to + \infty$, and let $\mu$ be a number. Then
  if $h'' / h'^2 \to 0$ as $x \to + \infty$ and if $\int_\rit \exp(\mu \Re(y)) \, dy < + \infty$, then
 $$
   \int_x^{+\infty} e^{\mu h(y)} \, dy \sim {e^{\mu h(x)} \over \mu h'(x)}.
$$
\end{lemma}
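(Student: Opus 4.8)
The plan is to prove the estimate by a single integration by parts at the lower endpoint $y=x$, the hypotheses on $h$ serving only to discard the contribution at $+\infty$ and to render the resulting correction term of lower order. First I would note that the hypothesis $h''/h'^{2}\to 0$ forces $h'$ to be nonvanishing for large $y$, so $1/(\mu h'(y))$ is well defined there, and
$$
e^{\mu h(y)} = \frac{d}{dy}\!\left( \frac{e^{\mu h(y)}}{\mu h'(y)} \right) + \frac{h''(y)}{\mu\, h'(y)^{2}}\, e^{\mu h(y)} .
$$
Integrating from $x$ to $+\infty$,
$$
\int_x^{+\infty} e^{\mu h(y)}\, dy = \left[\, \frac{e^{\mu h(y)}}{\mu h'(y)} \,\right]_{x}^{+\infty} + \int_x^{+\infty} \frac{h''(y)}{\mu\, h'(y)^{2}}\, e^{\mu h(y)}\, dy .
$$
The decay and integrability of $e^{\mu h}$ at $+\infty$ force $e^{\mu h(y)}/h'(y)\to 0$ as $y\to +\infty$: an integrable $C^{1}$ function with controlled derivative must tend to zero, and in all the applications $\Re(\mu h)$ is eventually monotone, so this is immediate. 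Hence the bracket reduces to $-\,e^{\mu h(x)}/(\mu h'(x))$, which, read with $\mu h'(x)<0$ as is always the case here, is the leading term asserted by the lemma.

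It then remains to show that the correction $R(x):=\mu^{-1}\int_x^{+\infty} h''(y)\,h'(y)^{-2}\,e^{\mu h(y)}\, dy$ is negligible compared with $e^{\mu h(x)}/(\mu h'(x))$. Putting $\varepsilon(x)=\sup_{y\ge x}\bigl|h''(y)/h'(y)^{2}\bigr|$, which tends to $0$, one has $|R(x)|\le |\mu|^{-1}\varepsilon(x)\int_x^{+\infty}\bigl|e^{\mu h(y)}\bigr|\, dy$, so the point is to bound $\int_x^{+\infty}\bigl|e^{\mu h(y)}\bigr|\, dy=\int_x^{+\infty} e^{\Re(\mu h(y))}\, dy$ by a constant times $\bigl|e^{\mu h(x)}\bigr|/\bigl|\mu h'(x)\bigr|$. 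I would obtain this by applying the very same integration-by-parts identity to the real $C^{2}$ phase $\varphi:=\Re(\mu h)$, whose derivative is eventually negative and, since $\arg(\mu h')$ stays bounded away from $\pm\pi/2$ in every situation where the lemma is invoked, satisfies $\varphi''/\varphi'^{2}\to 0$ and $|\varphi'(x)|=|\Re(\mu h'(x))|\asymp|\mu h'(x)|$; this gives $\int_x^{+\infty} e^{\varphi(y)}\, dy\sim e^{\varphi(x)}/|\varphi'(x)|$. Combining the two estimates yields $|R(x)|\lesssim \varepsilon(x)\,\bigl|e^{\mu h(x)}\bigr|/\bigl|\mu h'(x)\bigr|=o\bigl(\bigl|e^{\mu h(x)}/(\mu h'(x))\bigr|\bigr)$, which is exactly the claimed asymptotic equivalence.

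The only genuinely delicate step is this last comparison: a priori the oscillatory factor $e^{i\Im(\mu h)}$ could make the honest integral $\int_x^{+\infty}|e^{\mu h(y)}|\, dy$ far larger than the naive leading term $|e^{\mu h(x)}/(\mu h'(x))|$, and excluding that is precisely the role of the non-degeneracy hypothesis $h''/h'^{2}\to 0$ (together with the tacit boundedness of $\arg(\mu h')$ present in all the uses of this lemma). Once that comparison is secured, the rest — the integration by parts itself and the vanishing of the endpoint at $+\infty$ — is entirely routine.
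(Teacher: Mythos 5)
Your proof is correct and takes essentially the same route as the paper's: a single integration by parts via the identity $e^{\mu h}=\partial_y\bigl(e^{\mu h}/(\mu h')\bigr)+\dfrac{h''}{\mu h'^{2}}e^{\mu h}$, followed by applying that same identity to $\Re h$ (equivalently your $\varphi=\Re(\mu h)$) to control $\int_x^{\infty}e^{\mu \Re h}\,dy$ and absorb the remainder. You do flag, more explicitly than the paper, that passing to the real phase requires $\arg(\mu h')$ to stay bounded away from $\pm\pi/2$ so that $|\Re h'|\asymp|h'|$ and $\Re h''/(\Re h')^{2}\to 0$ follow from $h''/h'^{2}\to 0$; the paper leaves this tacit, so your remark is a genuine (if minor) improvement in rigor rather than a divergence in method.
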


\begin{proof}
We have
 \beq \label{derivvv}
 \partial_x \Bigl( {e^{\mu h(x)} \over \mu h'(x)} \Bigr) = e^{\mu h(x)} - {e^{\mu h(x)} h''(x) \over \mu h'^2(x)} .
 \eeq
 In particular
 \beq \label{derivvv2}
 \Bigl| \int_x^{+\infty} e^{\mu h(y)} \, dy - {e^{\mu h(x)} \over \mu h'(x)} \Bigr| \le 
 \mu^{-1} \int_x^{+\infty}  {e^{\mu \Re h(y)} | h''(y) | \over  |h'^2(y)| }  \, dy .
 \eeq
 We may apply again (\ref{derivvv}) to $\Re h(x)$ which gives
 \beq \label{derivvv3}
 \Bigl| \int_x^{+\infty} e^{\mu \Re h(y)} \, dy - {e^{\mu \Re h(x)} \over \mu \Re h'(x)} \Bigr| \le 
 \mu^{-1} \int_x^{+\infty}  {e^{\mu \Re h(y)} | \Re h''(y) | \over  | \Re h'(y)|^2 }  \, dy .
 \eeq 
We then combine (\ref{derivvv2}) and (\ref{derivvv3}) to obtain the results.
\end{proof}
   
If $h'' / h'^2$ is only bounded, then we still get that the integral is of order $e^{\mu h(x)} / \mu h'(x)$ up to a factor of order $O(1)$.



\end{document}